%% LyX 2.3.2 created this file.  For more info, see http://www.lyx.org/.
%% Do not edit unless you really know what you are doing.
\documentclass[10pt]{amsart}
\usepackage{amsmath}
\usepackage{amssymb}
\usepackage{enumerate}
\usepackage{amsbsy}
\usepackage{amsfonts}
\usepackage{amsthm,graphicx,color,yfonts}
\usepackage[latin9]{inputenc}
\usepackage[numbers,sort]{natbib}
\pagestyle{headings}

\usepackage[colorlinks=true]{hyperref}
\hypersetup{linkcolor=red,citecolor=blue,filecolor=dullmagenta,urlcolor=blue}

\newtheorem{thm}{Theorem}[section]
\newtheorem{lemma}[thm]{Lemma}
\newtheorem{prop}[thm]{Proposition}
\newtheorem{cor}[thm]{Corollary}

\newtheorem{rem}[thm]{Remark}

\makeatother
%%%%%%%%%%%%%%%%%%%%%%%%%%%%%% Textclass specific LaTeX commands.
\numberwithin{equation}{section}
\numberwithin{figure}{section}
\theoremstyle{plain}
\theoremstyle{plain}
\theoremstyle{plain}
%\newtheorem{lem}[thm]{\protect\lemmaname}

%%%%%%%%%%%%%%%%%%%%%%%%%%%%%% User specified LaTeX commands.

\newcommand{\les}{\lesssim}

\newcommand{\vp}{{\varphi}}
\newcommand{\ve}{{\varepsilon}}
\newcommand{\de}{{\delta}}

\newcommand{\al}{{\alpha}}

\newcommand{\R}{{\mathbb R}}
\newcommand{\Z}{{\mathbb Z}}

\newcommand{\rt}{\mathbb{R}^3}
\newcommand{\brad}{\langle D\rangle}
\newcommand{\braxi}{\langle\xi\rangle}
\newcommand{\brat}{\langle t \rangle}

\newcommand{\phase}{{{p}_{\mathbf{\Theta}}}}
\newcommand{\phasep}{ p_{\mathbf{\Theta}}}

\newcommand{\thez}{{\theta_0}}
\newcommand{\theo}{{\theta_1}}
\newcommand{\thet}{{\theta_2}}
\newcommand{\theth}{{\theta_3}}

\newcommand{\thej}{{\theta_j}}

\def\normo#1{\left\|#1\right\|}

\def\abs#1{\left|#1\right|}
\def\bra#1{{\langle#1\rangle}}
\def\bigbra#1{\Big\langle#1\Big\rangle}
\def\wt#1{\widetilde{#1}}
\def\wh#1{\widehat{#1}}

%\makeatother

\global\long\def\R{\mathbf{\mathbb{R}}}%
\global\long\def\C{\mathbf{\mathbb{C}}}%
\global\long\def\Z{\mathbf{\mathbb{Z}}}%
\global\long\def\N{\mathbf{\mathbb{N}}}%
\global\long\def\jp#1{\langle#1\rangle}%
\global\long\def\wt#1{\widetilde{#1}}%
\global\long\def\lr#1{\left(#1\right)}%
\global\long\def\ve{\varepsilon}%
\global\long\def\al{\alpha}%
\global\long\def\ep{\epsilon}%

\begin{document}
\title[Modified scattering for Dirac equations]{The modified scattering for Dirac equations of scattering-critical nonlinearity}
\author{Yonggeun Cho}
\email{changocho@jbnu.ac.kr}
\address{Department of Mathematics, and Institute of Pure and Applied Mathematics,
Jeonbuk National University, Jeonju 54896, Republic of Korea}
\author{Soonsik Kwon}
\email{soonsikk@kaist.edu}
\address{Department of Mathematical Sciences, Korea Advanced Institute of Science
and Technology, 291 Daehak-ro, Yuseong-gu, Daejeon 34141, Republic of Korea}
\author{Kiyeon Lee}
\email{kiyeonlee@kaist.ac.kr}
\address{Stochastic Analysis and Application Research Center(SAARC), Korea Advanced Institute of Science and Technology, 291 Daehak-ro, Yuseong-gu, Daejeon, 34141, Republic of	Korea}
\author{Changhun Yang}
\email{chyang@chungbuk.ac.kr}
\address{Department of Mathematics, Chungbuk National University, Chungdae-ro1,
Seowon-gu, Cheongju-si, Chungcheongbuk-do, Republic of Korea}
\begin{abstract}

{ In this paper, we consider the Maxwell-Dirac system in 3 dimension under zero magnetic field. We prove the global well-posedness and modified scattering for small solutions in the weighted Sobolev class. Imposing the Lorenz gauge condition, (and taking the Dirac projection operator), it becomes a system of Dirac equations with Hartree type nonlinearity with a long range potential as $|x|^{-1} $.  We perform the weighted energy estimates. In this procedure, we have to deal with various resonance functions that stem from the Dirac projections. We use the spacetime resonance argument of Germain-Masmoudi-Shatah (\cite{gemasha2008, gemasha2012-jmpa, gemasha2012-annals}), as well as the spinorial null-structure. On the way, we recognize a long range interaction which is responsible for a logarithmic phase correction in the modified scattering statement. }
\end{abstract}

\thanks{2020 \textit{Mathematics Subject Classification.} 35Q41, 35Q40, 35Q55.}
\thanks{\textit{Keywords and phrases.} Maxwell-Dirac equations, Lorenz gauge,
global well-posedness, modified scattering, energy estimates, phase modification.}
\maketitle

\section{Introduction}
Consider the Maxwell-Dirac equations  \eqref{maxwell-dirac} in $\R\times\R^{3}$:
\begin{align}
\left\{ \begin{aligned}(\partial_{t}+igA_{0}) & {\Psi}=\sum_{j=1}^{3}\al^{j}(\partial_{j}+igA_{j})\Psi-im\beta\Psi\qquad\mathrm{in}\;\;\mathbb{R}^{1+3},\\
-\square A_{0} & =g\bra{\Psi,\Psi},\\
\square A_{j} & =g\bra{\Psi, \al^j\Psi}.
\end{aligned}
\right.\label{maxwell-dirac} \tag{MD}
\end{align}
Here the unknown $\Psi:\R\times\R^{3}\to\mathbb{C}^{4}$
is the spinor field, $A=(A_0,A_1,A_2,A_3)$ is given real-valued $1$-form, and the $4\times4$ matrices $\alpha^{j}$'s and
$\beta$ are Dirac matrices as follows:
\begin{align*}
\al^{j}=\left(\begin{array}{ll}
0 & \sigma^{j}\\
\sigma^{j} & \;0
\end{array}\right),\qquad\beta=\left(\begin{array}{ll}
I_{2} & \;\;\;0\\
0 & -I_{2}
\end{array}\right),
\end{align*}
where $\sigma^{j}$'s are Pauli matrices given by
\begin{align*}
\sigma^{1}=\left(\begin{array}{ll}
0 & \;1\\
1 & \;0
\end{array}\right),\quad\sigma^{2}=\left(\begin{array}{ll}
0 & \;-i\\
i & \;\;\;0
\end{array}\right)\quad\sigma^{3}=\left(\begin{array}{ll}
1 & \;\;\;0\\
0 & \;-1
\end{array}\right).
\end{align*}
The constant $m > 0$ is a mass and $g>0$ is a coupling constant. The
d'Alembert operator $\square$ has the form  $\square:=\partial_{t}^{2}-\Delta$ and the inner product is defined by $\bra{\psi,\phi} = \psi^{\dagger}\phi$.

The equations \eqref{maxwell-dirac} model an electron in electromagnetic field and form a fundamental system in quantum electrodynamics. 
\eqref{maxwell-dirac} is invariant under the gauge transformation: $(\Psi,A)\to(e^{i \chi}\Psi,A-d \chi)$
for a real-valued function $\chi$ on $\R\times\R^{3}$. For concreteness of our discussion, let us choose Lorenz gauge, which is defined by the condition  
\begin{align*}%\label{lorenz}
-\partial_{t}A_{0}+\sum_{j=1}^{3}\partial_{j}A_{j}=0.
\end{align*}
By assuming vanishing magnetic field 
$${\rm \text{curl}(A_{1},A_{2},A_{3})=0}$$
one can find a function $\vp$ such that $\nabla\vp = (A_{1},A_{2},A_{3})$.
The gauge transformation $\psi(t,x):=e^{ig\vp(t,x)}\Psi(t,x)$
uncouples the equations \eqref{maxwell-dirac} to satisfy the Dirac equations \eqref{maineq} 
\begin{align}
\left\{ \begin{aligned}\left(\partial_{t}+\sum_{j=1}^{3}\al^{j}\partial_{j}+im\beta\right)\psi & =ic_1\left(|x|^{-1}*|\psi|^{2}\right)\psi\;\;\mathrm{in}\;\;\mathbb{R}^{1+3},\\
\psi(0) & =\psi_{0},
\end{aligned}
\right.\label{maineq}  \tag{DE}
\end{align}
where  $c_1 = \frac{g^2}{4\pi}$ and $|\psi|^2=\langle \psi,\psi\rangle$. We refer to \cite{chagla} for rigorous derivation. 
Any smooth solution to \eqref{maineq} satisfies the $L_x^2$ conservation laws:
\begin{align}
\|\psi(t)\|_{L_{x}^{2}}=\|\psi_{0}\|_{L_{x}^{2}}.\label{mass-conserve}
\end{align}
By a scaling in $L^2$ we set $m = 1$ hereafter.
The purpose of this paper is to develop a modified scattering theory of global solutions to \eqref{maineq} for small initial data.

%%%%%%%%%%%%%%%%%%%%%%%%%%%%%%%%%%%%%%%%%%%%%%%%%%%%%%%%%%%%%%%%%%%%%%%%%%%%%%%%%%%%%%%%%%%%%%%%%%%%%%%%%%%%%%%%%%%%%%%%%%%%%%%%%%%%%%%%%%%%%%%%%%%%%%%%%%%%%%%%%%%%%%%%%%%%%%%%%%%%%%%%%%%%%%%%%%%%%%%%%%%%%%%%%%%%%5
%%%%%%%%%%%%%%%%%%%%%%%%%%%%%%%%%%%%%%%%%%%%%%%%%%%%%%%%%%%%%%%%%%%%%%%%%%%%%%%%%%%%%%%%%%%%%%%%%%%%%%%%%%%%%%%%%%%%%%%%%%%%%%%%%%%%%%%%%%%%%%%%%%%%%%%%%%%%%%%%%%%%%%%%%%%%%%%%%%%%%%%%%%%%%%%%%%%%%%%%%%%%%%%%%%%%%5

% In the massless case ($m=0$) the equation \eqref{maineq} has the
% scaling invariance structure in $L_{x}^{2}(\rt)$. That is, a function
% $\psi_{\mu}$ for $\mu>0$ defined by $\psi_{\mu}(t,x)=\mu^{\frac{3}{2}}\psi(\mu t,\mu x)$
% is also the solution to the equation \eqref{maineq} and $\psi_{\mu}$
% satisfies $\|\psi_{\mu}\|_{L_{x}^{2}}=\|\psi\|_{L_{x}^{2}}$. We may say the massive case $m > 0$ is essentially $L^2$-scaling invariant. 

%%%%%%%%%%%%%%%%%%%%%%%%%%%%%%%%%%%%%%%%%%%%%%%%%%%%%%%%%%%%%%%%%%%%%%%%%%%%%%%%%%%%%%%%%%%%%%%%%%%%%%%%%%%%%%%%%%%%%%%%%%%%%%%%%%%%%%%%%%%%%%%%%%%%%%%%%%%%%%%%%%%%%%%%%%%%%%%%%%%%%%%%%%%%%%%%%%%%%%%%%%%%%%%%%%%%%5
%%%%%%%%%%%%%%%%%%%%%%%%%%%%%%%%%%%%%%%%%%%%%%%%%%%%%%%%%%%%%%%%%%%%%%%%%%%%%%%%%%%%%%%%%%%%%%%%%%%%%%%%%%%%%%%%%%%%%%%%%%%%%%%%%%%%%%%%%%%%%%%%%%%%%%%%%%%%%%%%%%%%%%%%%%%%%%%%%%%%%%%%%%%%%%%%%%%%%%%%%%%%%%%%%%%%%5

As observed in \cite{anfosel}, the square of linear operator is diagonalized
\begin{align}
\lr{\sum_{j=1}^{3}\al_{j}\cdot\xi_{j}+\beta}^{2}=\jp{\xi}^{2}I_{4},\label{diago}
\end{align}
where $\langle\xi\rangle := (1+|\xi|^{2})^{\frac{1}{2}}$
for frequency vector $\xi\in\mathbb{R}^{3}$. 
We introduce the Dirac projection operators $\Pi_{\pm}(D)$
defined by
\begin{align}\label{eq-def-proj}
\Pi_{\pm}(D):=\frac{1}{2}\left(I_4\pm\frac{1}{\brad}\Big[\sum_{j=1}^{3}\al^{j}D_{j}+\beta\Big]\right),
\end{align}
where $D = (D_1, D_2, D_3), D_j = -i\partial_j$, and $\mathcal F(\left<D\right> f) = \langle\xi\rangle \wh{f}$. Here $\mathcal{F}(f)(\xi) = \wh{f}(\xi) = \int_{\mathbb{R}^3} e^{-ix\cdot \xi}f(x)\,dx$.  Then using \eqref{diago},
we get
\[
i\Big( -i\sum_{j=1}^{3}\al^{j}\partial_{j}+\beta \Big)=\jp D(\Pi_{+}(D)-\Pi_{-}(D)).
\]
By definition of projections we further get
\begin{equation}
\Pi_{+}(D)+\Pi_{-}(D)=I_{4},\quad \Pi_{\pm}(D)\Pi_{\pm}(D)=\Pi_{\pm}(D),\quad \Pi_{\pm}(D)\Pi_{\mp}(D)=0. \label{proj-commu}
\end{equation}
By $\psi_{\pm}$ we denote $\Pi_{\pm}(D)\psi$.
Then, the Dirac equation \eqref{maineq} can be rewritten as:
\begin{align}
\left\{ \begin{aligned}
(i\partial_{t} -\brad)\psi_{+} & = -c_1\Pi_{+}(D)\Big[(|x|^{-1}*|\psi|^{2})\psi\Big],\\
(i\partial_{t}+ \brad)\psi_{-} & = - c_1\Pi_{-}(D)\Big[(|x|^{-1}*|\psi|^{2})\psi\Big],\\
\psi_{+}(0)  = \psi_{0,+}:= & \Pi_{+}(D)\psi_{0} 
\text{ and }
\psi_{-}(0)  =\psi_{0,-}:=\Pi_{-}(D)\psi_{0}, 
\end{aligned}
\right.\label{maineq-decou}
\end{align} 
which is a system of half Klein-Gordon equations coupled by Hartree nonlinearity. Denoting by $e^{\mp it\brad}\psi_{0,\pm}$ the
free solutions to \eqref{maineq-decou}
\[
e^{\mp it\brad}\psi_{0,\pm}(x) = \frac{1}{(2\pi)^{3}}\int_{\mathbb{R}^{3}}e^{i(x\cdot\xi\mp t\langle\xi\rangle)}\widehat{\psi_{0,\pm}}(\xi)\,d\xi,
\]
we can express the free solution to \eqref{maineq} as
\[
U(t)\psi_{0}:=e^{-it\brad}\Pi_{+}(D)\psi_{0}+e^{it\brad}\Pi_{-}(D)\psi_{0}.
\]
By Duhamel's principle, the solutions to \eqref{maineq-decou} satisfy the following integral equation
\begin{align}
\left\{ \begin{aligned}
\psi_{+}(t) & =e^{- it\langle D\rangle}\psi_{0,+}+i\int_{0}^{t}e^{- i(t-t')\langle D\rangle}\Pi_{+}(D)\Big[(|x|^{-1}*|\psi|^{2})\psi\Big](t')\,dt', \\ 
\psi_{-}(t) & =e^{ it\langle D\rangle}\psi_{0,-}+i\int_{0}^{t}e^{ i(t-t')\langle D\rangle}\Pi_{-}(D)\Big[(|x|^{-1}*|\psi|^{2})\psi\Big](t')\,dt'.
\end{aligned} \label{inteq0}  \right. 
\end{align}

There has been  considerable mathematical interest in the Cauchy problem for \eqref{maxwell-dirac}. 
The local well-posedness (LWP) for small smooth initial data was first proved by Groos \cite{gro}.
Later, Bournaveas \cite{bour} considered rough initial data and obtained (LWP) result in $H^s(\mathbb{R}^3)$ for $s>\frac12$.
D'Ancona, Foschi and Selberg \cite{anfosel2} proved almost optimal (LWP) in $H^s(\mathbb{R}^3)$ with $s > 0$ in the sense that \eqref{maxwell-dirac} is $L^2$ critical, which means that it leaves the $L^2$-mass invariant by a scaling.
\eqref{maxwell-dirac} on other dimensions are also widely studied. 
Huh \cite{huh} considered \eqref{maxwell-dirac} on $\mathbb{R}^{1+1}$ and proved global well-posedness (GWP) in  $L_{x}^{2}(\R)$ and asymptotic behavior of the solutions.  D'Ancona and Selberg \cite{ansel} considered \eqref{maxwell-dirac} on $\mathbb{R}^{1+2}$ and established (GWP) in $L_x^2(\mathbb{R}^2)$.
Recently, Gavrus and Oh \cite{gaoh} established a linear scattering theory of \eqref{maxwell-dirac} on $\R^{1+d}$ for $d\ge4$ under the Coulomb gauge condition.

The Cauchy problem for \eqref{maineq} also has been extensively studied by several authors. Chadam and Glassey \cite{chagla} showed the existence of a unique global solution for smooth initial data with compact support when $d=2$.
Herr and Tesfahun \cite{Herr2015} proved a small data scattering of \eqref{maineq} with a potential replaced by Yukawa potential $e^{-|x|}|x|^{-1}$ in $H^s(\R^3)$ for $s>\frac12$. As explained in \cite[Remark~1.2]{Herr2015}, (LWP) for \eqref{maineq} in $H^s(\R^3)$ for $s>\frac14$ can be derived by a slight modification of the argument in Herr and Lenzmann \cite{hele2014}.

%%%%%%%%%%%%%%%%%%%%%%%%%%%%%%%%%%%%%%%%%%%%%%%%%%%%%%%%%%%%%%%%%%%%%%%%%%%%%%%%%%%%%%%%%%%%%%%%%%%%%%%%%%%%%%%%%%%%%%%%%%%%%%%%%%%%%%%%%%%%%%%%%%%%%%%%%%%%%%%%%%%%%%%%%%%%%%%%%%%%%%%%%%%%%%%%%%%%%%%%%%%%%%%%%%%%%5
%%%%%%%%%%%%%%%%%%%%%%%%%%%%%%%%%%%%%%%%%%%%%%%%%%%%%%%%%%%%%%%%%%%%%%%%%%%%%%%%%%%%%%%%%%%%%%%%%%%%%%%%%%%%%%%%%%%%%%%%%%%%%%%%%%%%%%%%%%%%%%%%%%%%%%%%%%%%%%%%%%%%%%%%%%%%%%%%%%%%%%%%%%%%%%%%%%%%%%%%%%%%%%%%%%%%%5
%\textcolor{blue}{
%We note that the nonlinearity is of Hartree type  with $|\psi|^{2}$ rather than $\psi^{\dagger}\beta\psi$.}
%%%%%%%%%%%%%%%%%%%%%%%%%%%%%%%%%%%%%%%%%%%%%%%%%%%%%%%%%%%%%%%%%%%%%%%%%%%%%%%%%%%%%%%%%%%%%%%%%%%%%%%%%%%%%%%%%%%%%%%%%%%%%%%%%%%%%%%%%%%%%%%%%%%%%%%%%%%%%%%%%%%%%%%%%%%%%%%%%%%%%%%%%%%%%%%%%%%%%%%%%%%%%%%%%%%%%5
%%%%%%%%%%%%%%%%%%%%%%%%%%%%%%%%%%%%%%%%%%%%%%%%%%%%%%%%%%%%%%%%%%%%%%%%%%%%%%%%%%%%%%%%%%%%%%%%%%%%%%%%%%%%%%%%%%%%%%%%%%%%%%%%%%%%%%%%%%%%%%%%%%%%%%%%%%%%%%%%%%%%%%%%%%%%%%%%%%%%%%%%%%%%%%%%%%%%%%%%%%%%%%%%%%%%%5

 In this paper, we are interested in asymptotic behaviors of small global solutions to \eqref{maineq}. The equation is scaling-critical in the sense that the Duhamel terms decay as fast as the linear solutions and so it cannot be seen purely perturbative term. A heuristic computation shows that a long range potential in the nonlinearity gives $|x|^{-1}*|e^{-\theta it \left<D\right>}\psi^\infty|^2 \sim t^{-1}$ if $\psi^\infty \neq 0$ in $L^2(\R^3)$. Indeed, in \cite{chleoz, chhole}\footnote{ In \cite{chleoz,chhole}, the authors proved the  nonexistence of
linear scattering for the equations with $\psi^\dagger \beta \psi$ rather than $|\psi|^2$. But almost the same argument can be
applied to \eqref{maineq-decou}.}, the authors showed that the linear scattering is not possible in $L^{2}$ space. Hence, one can anticipate a modified scattering, which means solutions decay like linear solutions and converge to linear-like 
solutions after \emph{logarithmic phase correction}. Here, we state our main result of global existence and modified scattering.

\begin{thm} \label{mainthm} Let $k\ge1000$. There exists $\overline{\ve_{0}}> 0$ such that:

$(i)$\;\; Suppose that $\psi_0:\R^3\rightarrow\C^4$ is small in weighted spaces as follows:{
\begin{align}
\sum_{\theta \in \{ +,-\} }\left[\|\psi_{0,\theta}\|_{H^{k}}+\|\langle x\rangle^{2}\psi_{0,\theta}\|_{H^{2}}+\|\jp{\xi}^{10}\widehat{\psi_{0,\theta}}\|_{L_{\xi}^{\infty}}\right] < \ve_{0}\label{condition-initial}
\end{align}
for $\ve_{0}$ with $0 < \ve_{0} < \overline{\ve_{0}}$. Then the Cauchy problem \eqref{maineq} with initial data $\psi_{0}$
has a unique global solution $\psi$ to \eqref{maineq}  decaying as}
\begin{align}
\|\psi(t)\|_{L_{x}^{\infty}}\les\ve_{0}\bra{t}^{-\frac{3}{2}}.\label{global-bound}
\end{align}
{
$(ii)$\;\;Moreover, there exists a free solution $U(t)\phi$ such that
\begin{align}
%\left\Vert \braxi^{10}\left[e^{iB(t,\xi)}e^{-it\braxi}\wh{\psi_{+}}(\xi)+e^{iB(t,-\xi)}e^{it\braxi}\wh{\psi_{-}}(\xi)-\phi(\xi)\right]\right\Vert _{L_{\xi}^{\infty}}\les\brat^{-\de_{0}}\ve_{0}
\left\Vert \braxi^{10}\mathcal{F}\Big[\psi(t)-U_{B}(t)U(t)\phi\Big](\xi)\right\Vert _{L_{\xi}^{\infty}}\les\bra{t}^{-\de_{0}}\ve_{0},\label{scattering}
\end{align}
for some $0<\delta_0<\frac{1}{100}$, where the phase corrections are given by
\[
U_{B}(t) = \sum_{\theta \in \{  \pm\} }e^{-iB(t, \theta D)}\Pi_{\theta}(D)
\]
and}
\begin{align}
\left\{ \begin{aligned}B(t,\xi) & =B_{+}(t,\xi)+B_{-}(t,\xi),\\
B_{\theta}(t,\xi) & =\frac{c_1}{(2\pi)^{3}}\int_{0}^{t}\int_{\mathbb{R}^{3}}\left|\frac{\xi}{\langle\xi\rangle}-\theta\frac{\sigma}{\langle\sigma\rangle}\right|^{-1}\abs{\wh{\psi_{\theta}}(\sigma)}^{2}d\sigma\frac{\rho(s^{-\frac{1}{100}}\xi)}{\langle s\rangle}ds,
\end{aligned}
\right.\label{modified-phase}
\end{align}
for $t>0$, a real constant $c_1$ and $\rho \in C_0^\infty(B(0,2))$. \\
A similar result holds for $t<0$ by time reversal symmetry.
\end{thm}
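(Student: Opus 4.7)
The plan is to work with the profile $f_\theta(t) := e^{i\theta t\langle D\rangle}\psi_\theta(t)$ for $\theta\in\{+,-\}$, which in Fourier space satisfies a trilinear equation of the schematic form
\begin{align*}
i\partial_t \wh{f_\theta}(t,\xi) = c_1 \sum_{\theta_1,\theta_2,\theta_3} \iint e^{it\, \phase(\xi,\eta,\sigma)} m_{\mathbf{\Theta}}(\xi,\eta,\sigma)|\eta-\sigma|^{-2}\, \wh{f_{\theta_1}}(\eta)\ol{\wh{f_{\theta_2}}(\sigma)}\wh{f_{\theta_3}}(\xi-\eta+\sigma)\, d\eta\, d\sigma,
\end{align*}
with phase $\phase=\theta\langle\xi\rangle-\theta_1\langle\eta\rangle+\theta_2\langle\sigma\rangle-\theta_3\langle\xi-\eta+\sigma\rangle$ and a matrix symbol $m_{\mathbf{\Theta}}$ coming from the composition of Dirac projectors $\Pi_\theta\Pi_{\theta_1},\Pi_{\theta_2}\Pi_{\theta_3}$. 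The proof will run a bootstrap argument on three norms suggested by \eqref{condition-initial}: a high Sobolev energy $\|\psi_\theta\|_{H^k}$, a weighted norm $\|\langle x\rangle^2 \psi_\theta\|_{H^2}$ (equivalently, two $\partial_\xi$ derivatives of $\wh{f_\theta}$ in $L^2$), and the decay-carrying Fourier-$L^\infty$ norm $\|\langle\xi\rangle^{10}\wh{f_\theta}\|_{L^\infty_\xi}$. The $L^\infty_x$ bound \eqref{global-bound} will be recovered from the Fourier-$L^\infty$ control via the standard Klein-Gordon stationary-phase dispersive estimate.

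The energy and weighted energy estimates are the technical heart. For the $H^k$ norm, one uses paraproduct decompositions together with the spinorial null-structure: the symbol $\Pi_\theta(\xi)\Pi_{\theta_1}(\eta)$ vanishes to first order along the resonant set $\{\xi=\theta\theta_1\eta/|\eta|\cdot|\xi|\}$, producing a null form that compensates the loss from $|\eta-\sigma|^{-2}$ in the Hartree kernel. For the weighted norm $\|\langle x\rangle^2 \psi_\theta\|_{H^2}$, differentiating the integral equation twice in $\xi$ produces dangerous factors of $t^2$ (from $\partial_\xi^2 e^{it\phase}$), which must be tamed by integration by parts in $\eta,\sigma$ using the non-stationarity of $\phase$ off the space-time resonance locus, or, where this fails, by the null-structure and time-integration-by-parts against non-time-resonant phases. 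The eight sign combinations $\mathbf{\Theta}=(\theta,\theta_1,\theta_2,\theta_3)$ each yield a distinct resonance geometry: the diagonal interaction $(\theta,\theta_1,\theta_2,\theta_3)=(\theta,\theta,\theta',\theta')$ with $\eta=\sigma$ is fully space-time resonant (since then $\phase\equiv 0$) and is the source of the long-range behavior, while all other choices are non-resonant in at least one variable and contribute only short-range terms decaying like $\langle t\rangle^{-1-\delta}$.

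For the Fourier-$L^\infty$ estimate, I would isolate the resonant diagonal by inserting a cutoff $\rho(s^{-1/100}\xi)$ as in \eqref{modified-phase} and freeze $\sigma\mapsto\eta$ in the resonant integral, writing
\begin{align*}
\partial_t \wh{f_\theta}(t,\xi) = -i\,\partial_t B_\theta(t,\xi)\cdot \wh{f_\theta}(t,\xi) + \mathrm{Err}(t,\xi),
\end{align*}
with $B_\theta$ the phase appearing in \eqref{modified-phase} and $\mathrm{Err}$ an error decaying as $\langle t\rangle^{-1-\delta_0}$ in the Fourier-$L^\infty$ norm. This identity, integrated against $e^{iB_\theta(t,\xi)}$, shows that $e^{iB(t,\xi)}\wh{f_\theta}(t,\xi)$ is Cauchy in $L^\infty_\xi$, yielding a profile $\wh{\phi}_\theta$ and the convergence rate \eqref{scattering}. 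The extraction of the phase relies on uniform-in-$t$ convergence of the resonant Hartree integral against the evolving $|\wh{f_\theta}(s,\sigma)|^2$; here the weighted $L^2$ control provides the modulus of continuity in $\sigma$ needed to pass from $\wh{f_\theta}(s,\sigma)$ at time $s$ to $\wh{\phi}_\theta(\sigma)$ without losing the $\langle s\rangle^{-1}$ decay. The main obstacle will be the bookkeeping of the eight sign-resonance geometries and proving that the \emph{spinorial null structure} from $\Pi_\theta(\xi)\Pi_{\theta_1}(\eta)$ exactly compensates the singularity of $|\eta-\sigma|^{-2}$ and the $t^2$-loss from two $\partial_\xi$ derivatives in the weighted norm, in every non-diagonal case simultaneously.
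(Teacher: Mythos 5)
Your proposal captures the paper's approach: a bootstrap in $H^k$, weighted $H^2$, and Fourier-$L^\infty_\xi$ norms; space--time resonance analysis in the Duhamel formula with the $2^3$ sign combinations; null structures to tame the singular Hartree kernel and the $t^2$ loss when two $\partial_\xi$ hit the phase; and extraction of the logarithmic phase $B_\theta$ by restricting to the vanishing locus of the Hartree convolution variable and freezing the profile there. One caution: you have lumped the null mechanisms together, but the paper relies on two \emph{distinct} vanishings depending on the signs, and neither alone suffices in all cases. When $\theta_0=\theta_1$ the spinorial product $\Pi_{\theta_0}(\xi)\Pi_{\theta_1}(\xi-\eta)$ does \emph{not} vanish; what gains a factor $|\eta|$ there is $\nabla_\xi p_{(\theta_0,\theta_1)} = \theta_0\xi/\langle\xi\rangle - \theta_1(\xi-\eta)/\langle\xi-\eta\rangle$, not the projection symbol. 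When $\theta_0\neq\theta_1$, the spinorial null form $\Pi_{\theta_0}(\xi)\Pi_{\theta_1}(\xi-\eta)=O(|\eta|)$ (Lemma 3.1 of the paper) kicks in, and when additionally $\theta_2\neq\theta_3$ you gain a second such factor from the density side. The genuinely delicate case $\theta_0\neq\theta_1,\ \theta_2=\theta_3$ has only one available $|\eta|$-gain, and here the paper integrates by parts in time against the non-degenerate phase $\theta_0(\langle\xi\rangle + \langle\xi-\eta\rangle)$, using the further cancellation $\langle\sigma\rangle - \langle\xi+\sigma\rangle = O(|\xi|)$ in the time derivative of the density (Lemma 3.4) to close. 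Your sketch names time-integration-by-parts as a fallback but does not identify this particular sign pattern as the place it is indispensable, nor the auxiliary null structure that makes it work; fill that in and the case analysis closes as in the paper. (Also, a bookkeeping slip: with the factor labelling $\wh{f_{\theta_1}}(\eta)\,\ol{\wh{f_{\theta_2}}(\sigma)}\,\wh{f_{\theta_3}}(\xi-\eta+\sigma)$ the Hartree kernel should be $|\xi-\eta|^{-2}$, not $|\eta-\sigma|^{-2}$; your identification of the resonant diagonal $(\theta,\theta,\theta',\theta')$ is then consistent and matches the paper's $\mathbf{\Xi}$.)
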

%%%%%%%%%%%%%%%%%%%%%%%%%%%%%%%%%%%%%%%%%%%%%%%%%%%%%%%%%%%%%%%%%%%%%%%%%%%%%%%%%%%%%%%%%%%%%%%%%%%%%%%%%%%%%%%%%%%%%%%%%%%%%%%%%%%%%%%%%%%%%%%%%%%%%%%%%%%%%%%%%%%%%%%%%%%%%%%%%%%%%%%%%%%%%%%%%%%%%%%%%%%%%%%%%%%%%%%%%%%%%%%%%%%%
\begin{rem}
In Theorem \ref{mainthm}, the scattering sense \eqref{scattering} is described in the Fourier space. This is because the energy estimates are carried out in $L_\xi^\infty$. (See Propositions \ref{prop-scatt}.) But it can also be expressed in the physical space by the following observation
\[
\left\| \psi(t) - U_B(t)U(t) \phi \right\|_{L_x^2} \les \left\|\bra{\xi}^{10}\mathcal F \left[ \psi(t) - U_B(t)U(t) \phi \right] \,\right\|_{L_\xi^\infty} \xrightarrow{t\to \infty} 0.
\]
\end{rem}
The modified scattering of small solutions often occurs when the nonlinearity contains long-range interactions. This topic has been studied by many authors. Ozawa \cite{Ozawa91} showed the modified scattering for 1D cubic nonlinear Schr\"odinger equations (NLS). Hayahsi and Naumkin \cite{Hayashi-Naumkin} showed it for high dimensional NLS. Later, it turns out that the spacetime-resonance argument by Germain, Masmoudi, and Shatah is efficient for these problems  \cite{kapu,pusa,iopu,gepuro}. Among others, the most relevant reference to this work is the modified scattering result by Pusateri \cite{pusa} for Boson star equation: 
\begin{align}\label{bosoneq}(-i\partial_t + \left<D\right>) u = \left(|x|^{-1}*|u|^2\right)u.\end{align}
 %A key feature, comparing previous results, is that we describe global existence and  asymptotic behavior of 3 dimensional MD system which has scattering critical nonlinearity, whereas up to now LWP results and scattering subcritical cases are known.
%%%%%%%%%%%%%%%%%%%%%%%%%%%%%%%%%%%%%%%%%%%%%%%%%%%%%%%%%%%%%%%%%%%%%%%%%%%%%%%%%%%%%%%%%%%%%%%%%%%%%%%%%%%%%%%%%%%%%%%%%%%%%%%%%%%%%%%%%%%%%%%%%%%%%%%%%%%%%%%%%%%%%%%%%%%%%%%%%%%%%%%%%%%%%%%%%%%%%%%%%%%%%%%%%%%%%%%%%%%%%%%%%%%%
%%%%%%%%%%%%%%%%%%%%%%%%%%%%%%%%%%%%%%%%%%%%%%%%%%%%%%%%%%%%%%%%%%%%%%%%%%%%%%%%%%%%%%%%%%%%%%%%%%%%%%%%%%%%%%%%%%%%%%%%%%%%%%%%%%%%%%%%%%%%%%%%%%%%%%%%%%%%%%%%%%%%%%%%%%%%%%%%%%%%%%%%%%%%%%%%%%%%%%%%%%%%%%%%%%%%%%%%%%%%%%%%%%%%
 Using the Dirac projection, \eqref{maineq} is written in \eqref{maineq-decou}, which has a similar linear structure to \eqref{bosoneq}. But the scalar equation \eqref{bosoneq} has a single resonance function ${p} = \braxi - \langle\xi-\eta\rangle - \langle\eta +\sigma \rangle + \bra{\sigma}$ in the interaction representation. In particular, since $\nabla_\xi p$ gives a null-structure near $\eta = 0$,  a possible loss of time factor can be recovered when weighted energy estimates\footnote{The weighted energy estimates are referred as the control of quantity $x^2\psi_\theta$ in $H^2$} are concerned.  However, in \eqref{maineq-decou} each nonlinear term contains all signs of Dirac projections, and it gives rise to various phases of interaction,
\[
{p}_{{\mathbf{\Theta}} } = \thez\braxi-\theo\langle\xi-\eta\rangle-\thet\langle\eta+\sigma\rangle+\theth\bra{\sigma},\;\; {\mathbf{\Theta}} =(\thez,\theo,\thet,\theth),
\]
with $\theta_j \in\{+,-\}$ (see \eqref{inteq-f}). Hence the aforementioned null-structures arising from $\nabla_\xi {p}_{{\mathbf{\Theta}}}$ may be lost for some combinations of signs. This causes a significant obstacle in the course of weighted energy estimates.
%%%%%%%%%%%%%%%%%%%%%%%%%%%%%%%%%%%%%%%%%%%%%%%%%%%%%%%%%%%%%%%%%%%%%%%%%%%%%%%%%%%%%%%%%%%%%%%%%%%%%%%%%%%%%%%%%%%%%%%%%%%%%%%%%%%%%%%%%%%%%%%%%%%%%%%%%%%%%%%%%%%%%%%%%%%%%%%%%%%%%%%%%%%%%%%%%%%%%%%%%%%%%%%%%%%%%%%%%%%%%%%%%%%%
%\textcolor{red}{ (delete ?) Then the most delicate case emerges in the proof of weighted estimates. In this proof, we have to control a quantity $x^2 \psi_\theta$ in $L_x^2$. In \cite{pusa}, for the low-frequency regime of potential $|x|^{-1}$, the author could recover the loss of  time factor $s^2$ arising from derivative $\nabla_{\xi}^2$ due to the behavior of the multiplier $\nabla_{\xi}^2 \phase$. Besides, since all sign cases appear  in the case of MD system \eqref{maineq},}
%%%%%%%%%%%%%%%%%%%%%%%%%%%%%%%%%%%%%%%%%%%%%%%%%%%%%%%%%%%%%%%%%%%%%%%%%%%%%%%%%%%%%%%%%%%%%%%%%%%%%%%%%%%%%%%%%%%%%%%%%%%%%%%%%%%%%%%%%%%%%%%%%%%%%%%%%%%%%%%%%%%%%%%%%%%%%%%%%%%%%%%%%%%%%%%%%%%%%%%%%%%%%%%%%%%%%%%%%%%%%%%%%%%%
%%%%%%%%%%%%%%%%%%%%%%%%%%%%%%%%%%%%%%%%%%%%%%%%%%%%%%%%%%%%%%%%%%%%%%%%%%%%%%%%%%%%%%%%%%%%%%%%%%%%%%%%%%%%%%%%%%%%%%%%%%%%%%%%%%%%%%%%%%%%%%%%%%%%%%%%%%%%%%%%%%%%%%%%%%%%%%%%%%%%%%%%%%%%%%%%%%%%%%%%%%%%%%%%%%%%%%%%%%%%%%%%%%%%
To overcome this obstacle, for some cases, we utilize the spinorial null-structure occurring from the Dirac projections $\Pi_\theta$, which is successful except for the cases $\theta_0 \neq \theta_1, \theta_2 = \theta_3$. The remaining cases $\thez \neq \theo, \thet = \thet$ can be treated by the usual time non-resonance (integration by parts in time) (see \textbf{case b} in Section \ref{sec-energy}).
%%%%%%%%%%%%%%%%%%%%%%%%%%%%%%%%%%%%%%%%%%%%%%%%%%%%%%%%%%%%%%%%%%%%%%%%%%%%%%%%%%%%%%%%%%%%%%%%%%%%%%%%%%%%%%%%%%%%%%%%%%%%%%%%%%%%%%%%%%%%%%%%%%%%%%%%%%%%%%%%%%%%%%%%%%%%%%%%%%%%%%%%%%%%%%%%%%%%%%%%%%%%%%%%%%%%%%%%%%%%%%%%%%%%
%%%%%%%%%%%%%%%%%%%%%%%%%%%%%%%%%%%%%%%%%%%%%%%%%%%%%%%%%%%%%%%%%%%%%%%%%%%%%%%%%%%%%%%%%%%%%%%%%%%%%%%%%%%%%%%%%%%%%%%%%%%%%%%%%%%%%%%%%%%%%%%%%%%%%%%%%%%%%%%%%%%%%%%%%%%%%%%%%%%%%%%%%%%%%%%%%%%%%%%%%%%%%%%%%%%%%%%%%%%%%%%%%%%%

The proof is based on the \textit{bootstrap argument}. We construct a function space adapted to time decay estimates below \eqref{timedecay}. By assuming the norm is a priori small, we deduce that the solutions decay like a solution to a linear equation. To achieve this, we first establish the weighted energy estimates, which require to control $x\psi_\theta $ and $x^2 \psi_\theta$ in $H^2$. Here, several null conditions, as well as time non-resonance, play an important role in the course of estimates (see Section~\ref{subidea}). And then, for asymptotic analysis, we approximate the interaction function $f_{\pm} = e^{\pm it\langle D\rangle}\psi_{\pm}$ in the Fourier space by using the Taylor expansion to figure out the leading contribution where the phase logarithm correction is required. For rigorous analysis, we decompose the singular potential in the Fourier space, $\mathcal{F}(|x|^{-1})\sim |\eta|^{-2}$, with a suitable scale in $\eta$ depending on time, say $t^{L_0}$.
Then, $|\eta|\les t^{L_0}$ corresponds to the main term, and, at the same time, remaining contributions on $|\eta|\gtrsim t^{L_0}$ are shown to be integrable $O(t^{-1-})$ via the integration by parts in space variables.

% Our strategy of proof is threefold. First, we devote to finding a time decay of solutions to \eqref{maineq} under a priori bound assumption \eqref{assumption-apriori}.  Next, we prove weighted estimates on Sobolev norms. For this purpose, we control $x  \psi_\theta $ and $x^2 \psi_\theta$ in $H^2$, for which we use spinorial null-structures, and time non-resonance. The time non-resonance shows that, via integration by parts in time, the time derivative falls on an interaction function $f_{\theta} = e^{\theta it\langle D\rangle}\psi_{\theta}$, and hence gives us an extra time decay. Finally, we complete the proof by establishing a bootstrap argument in Fourier space $L_\xi^\infty$. In this step, it is crucial to obtain time decay $t^{-\de_0}$ for $L_\xi^\infty$-norm, which can be done by decomposing the potential $\mathcal F (|x|^{-1})$ into low and high frequency regime. At the low frequency, we adopt the phase logarithm correction \eqref{modified-phase} to get $L_\xi^\infty$--bound of $\wh{\psi_\theta}$. In the high frequency regime, a similar bound can be obtained without the phase correction.

The paper is organized as follows: In Section~2, we introduce a function space adapted to time decay estimates. Under a priori assumption, we establish several frequency localized multilinear estimates. In Section~3, we investigate the null structures which improve the estimates obtained in Section~2.
The improved estimates play a crucial role in the course of the weighted energy estimates.
In Section~4, we provide the proof of the main theorem by assuming the weighted energy estimates and $L_{\xi}^{\infty}$ estimates, followed by the proof of these two estimates in the last two sections, respectively.

\subsection{Notations}
% \noindent $\bullet$ For $\thej\in\{+,-\}$ with $j\ge1$, we denote $\psi_{\thej}$
% and $f_{\thej}$ by $\psi_{j}$ and $f_{j}$, respectively.
\noindent $\bullet$ (Mixed-normed spaces) For a Banach space $X$
and an interval $I$, $u\in L_{I}^{q}X$ iff $u(t)\in X$ for a.e. $t\in I$
and $\|u\|_{L_{I}^{q}X}:=\|\|u(t)\|_{X}\|_{L_{I}^{q}}<\infty$. Especially,
we denote $L_{I}^{q}L_{x}^{r}=L_{t}^{q}(I;L_{x}^{r}(\rt))$, $L_{I,x}^{q}=L_{I}^{q}L_{x}^{q}$,
$L_{t}^{q}L_{x}^{r}=L_{\mathbb{R}}^{q}L_{x}^{r}$.

\noindent  $\bullet$ For $1\le p<\infty$, $W^{s,p}:=\{ f :  \|f\|_{W^{s,p}}= \| \langle D\rangle^s f \|_{L^p}<\infty \}$. As usual, we denote $W^{s,2}$ by $H^s$, the Sobolev spaces. 
For $k\in \N$, $\|f\|_{W^{k,\infty}}:=\sum_{|\alpha|\le k} \|\partial^\alpha f \|_{L^\infty}$, where $\alpha$ denotes the multi-index $\alpha=(\alpha_1,\alpha_2,\alpha_3)$, $\partial^\alpha=\partial_{1}^{\alpha_1}\partial_{2}^{\alpha_2}\partial_3^{\alpha_{3}}$ and $|\alpha|=\alpha_1+\alpha_2+\alpha_3$.

\noindent $\bullet$ As usual different positive constants depending
only on the coupling constant $g$ are denoted by the same letter $C$, if not specified.
$A\lesssim B$ and $A\gtrsim B$ means that $A\le CB$ and $A\ge C^{-1}B$,
respectively for some $C>0$. $A\sim B$ means that $A\lesssim B$
and $A\gtrsim B$.

\noindent $\bullet$ (Dyadic decomposition) Let $\rho$ be a
bump function such that $\rho\in C_{0}^{\infty}(B(0,2))$
with $\rho(\xi)=1$ for $|\xi|\le1$ and define $\rho_{N}(\xi):=\rho\left(\frac{\xi}{N}\right)-\rho\left(\frac{2\xi}{N}\right)$
for $N\in2^{\mathbb{Z}}$. Then we define the frequency dyadic projection operator 
$P_{N}$ by $\mathcal{F}(P_{N}f)(\xi)=\rho_{N}(\xi)\widehat{f}(\xi)$.
Similarly, we define $P_{\le N}$ and $P_{\sim N}$ as the fourier multipliers with symbols $\rho_{\le N_{0}}:=1-\sum_{N>N_{0}}\rho_{N}$ and $\widetilde{\rho_{N}}:=\rho_{N/2}+\rho_{N}+\rho_{2N}$, respectively. We observe that  $\widetilde{P_{N}}P_{N}=P_{N}\widetilde{P_{N}}=P_{N}$. In addition, we denote 
$P_{N_{1}\le\cdot\le N_{2}}:=\sum_{N_{1}\le N\le N_{2}}P_{N}$.
Especially, we denote $P_{N}f$ simply by $f_{N}$ for any measurable function
$f$.

\noindent $\bullet$ Let $\textbf{A}=(A_i), \textbf{B}=(B_i) \in \R^n$. Then $\textbf{A} \otimes \textbf{B}$ denotes the usual tensor product such that $(\textbf{A} \otimes \textbf{B})_{ij} = A_iB_j$. We also denote a tensor product of $\textbf{A} \in \C^n$ and $\textbf{B} \in \C^m$  by a matrix $\textbf{A} \otimes \textbf{B} = (A_iB_j)_{\substack{i=1,\cdots,n\\i=1,\cdots,m}}$. We often denote $\textbf{A} \otimes \textbf{B}$ simply by  $\textbf{A}\textbf{B}$. We also consider the product of  $x \in \R^3$ and $f \in \C^4$ by $xf = x \otimes f$. Moreover, we denote $\nabla f := (\partial_j f_i)_{ij}$.

\section{Function space and Time decay}

\subsection{Time decay of Dirac equations}For given small initial data as in \eqref{condition-initial}, by
a standard local theory in weighted energy spaces, we have a small
local solution $\psi_{\theta}(t)$ on $[0,T_{0}]$ for $\theta \in \{ +,- \}$. Our goal is to show the solutions are global and decay in time. Let us begin with introducing a
spacetime norm incorporating time decay. 
Let $k\ge300$ and $0<\delta_0<\frac{1}{100}$.
For $\ve_{1}>0$ to be chosen
later, we assume a priori smallness of solutions: for a large time $T>0,$
\begin{align}
\|\psi\|_{\Sigma_{T}}
:=\|\Pi_+(D)\psi\|_{\Sigma_{T}^+} + \|\Pi_-(D)\psi\|_{\Sigma_{T}^-}
\les\ve_{1},\label{assumption-apriori}
\end{align}
where 
\begin{align*}
\begin{aligned}\|\phi\|_{\Sigma_{T}^{\pm}} & :=\sup_{t\in[0,T]}\Big[\brat^{-\de_{0}}\|\phi(t)\|_{H^{k}}+\brat^{-\de_{0}}\|\bra{x} e^{\pm it\jp D}\phi(t)\|_{H^{2}}\phantom{]}\\
 & \phantom{[}\qquad\qquad\qquad\quad+\brat^{-2\de_{0}}\|\langle x\rangle^{2}e^{\pm it\jp D}\phi(t)\|_{H^{2}}+\left\Vert \braxi^{10}\widehat{\phi(t)}\right\Vert _{L_{\xi}^{\infty}}\Big].
\end{aligned}
\end{align*}
One may relax regularity assumptions,
e.g., $H^{k}$, $\braxi^{10}$, or $\delta_0$. We
do not pursue to optimize those parameters. 

In order to obtain pointwise decay of solutions from the above a priori bound, we use the following linear estimates.
\begin{prop}[Linear decay estimates]\label{timedecay-prop}
Let $f:\R^3\rightarrow \C^4$. For any $t\in\R$ one has 
\begin{align}\begin{aligned}\label{timedecay}
\left\| e^{\pm it \langle D \rangle }f \right\|_{L_x^\infty(\R^3)}
&\lesssim \frac{1}{(1+|t|)^\frac32}
\left\| (1+|\xi|)^6\widehat{f}(\xi) \right\|_{L_\xi^\infty(\R^3)}  \\
&\qquad + \frac{1}{(1+|t|)^\frac{31}{20}}
\left\{  \left\| \langle x\rangle^2 f\right\|_{L^2(\R^3)} 
+ \| f \|_{H^{50}(\R^3)}    \right\}.
\end{aligned}\end{align}
\end{prop}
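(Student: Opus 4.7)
The plan is a stationary phase analysis of
\[
(e^{\theta it\langle D\rangle}f)(x)=\frac{1}{(2\pi)^{3}}\int_{\R^{3}}e^{i\Phi(\xi)}\widehat{f}(\xi)\,d\xi,\qquad \Phi(\xi):=x\cdot\xi+\theta t\langle\xi\rangle,\;\;\theta\in\{+,-\}.
\]
The critical point $\xi_{\ast}(x,t):=-\theta x/\sqrt{t^{2}-|x|^{2}}$ exists only when $|x|<|t|$, and the Hessian $\Phi''(\xi)=\theta t[\langle\xi\rangle^{-1}I-\langle\xi\rangle^{-3}\xi\otimes\xi]$ has two transverse eigenvalues $\theta t\langle\xi\rangle^{-1}$ and the radial eigenvalue $\theta t\langle\xi\rangle^{-3}$, giving $|\det\Phi''(\xi_{\ast})|^{-1/2}\sim t^{-3/2}\langle\xi_{\ast}\rangle^{5/2}$. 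This is the standard dispersive profile of the half Klein--Gordon semigroup, and the argument follows the well-known scheme (compare \cite{Herr2015,pusa}).

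I would first perform a Littlewood--Paley decomposition $f=\sum_{N\in 2^{\Z}}P_{N}f$ and fix the threshold $N_{0}:=\langle t\rangle^{1/20}$. For $N>N_{0}$, Bernstein's inequality and the $L^{2}$-isometry of $e^{\theta it\langle D\rangle}$ yield
\[
\|P_{N}e^{\theta it\langle D\rangle}f\|_{L_{x}^{\infty}}\lesssim N^{3/2}\|P_{N}f\|_{L^{2}}\lesssim N^{-97/2}\|f\|_{H^{50}},
\]
so the high-frequency sum is $\lesssim \langle t\rangle^{-97/40}\|f\|_{H^{50}}$, well within the stated error budget; it therefore suffices to treat $N\le N_{0}$.

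For $N\le N_{0}$ and the interior regime $|x|\le |t|/2$, introduce a smooth cutoff $\chi_{\ast}$ supported in an anisotropic box around $\xi_{\ast}$ of transverse radius $\sim\langle\xi_{\ast}\rangle^{1/2}t^{-1/2}$ and radial radius $\sim\langle\xi_{\ast}\rangle^{3/2}t^{-1/2}$, and split $\widehat{P_{N}f}=\chi_{\ast}\widehat{P_{N}f}+(1-\chi_{\ast})\widehat{P_{N}f}$. The near-stationary piece is trivially bounded by the box volume $\sim t^{-3/2}\langle\xi_{\ast}\rangle^{5/2}$ times $\|\widehat{P_{N}f}\|_{L_{\xi}^{\infty}}\le N^{-6}\|\langle\xi\rangle^{6}\widehat{f}\|_{L_{\xi}^{\infty}}$; since only the unique dyadic scale $N\sim\langle\xi_{\ast}\rangle$ contributes (for other $N$ the box misses the $P_{N}$-support), this produces the principal bound $\langle t\rangle^{-3/2}\|\langle\xi\rangle^{6}\widehat{f}\|_{L_{\xi}^{\infty}}$. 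For the far-stationary piece, the lower bound $|\nabla_{\xi}\Phi|\gtrsim t|\xi-\xi_{\ast}|/\langle\xi_{\ast}\rangle$ permits two integrations by parts with $L=-i|\nabla\Phi|^{-2}\nabla\Phi\cdot\nabla_{\xi}$; the two resulting $\xi$-derivatives on $\widehat{P_{N}f}$ become multiplications by $x$ on the physical side via $\nabla_{\xi}\widehat{P_{N}f}=\widehat{(-ix)P_{N}f}$, at a price $\sim t^{-2}\langle\xi_{\ast}\rangle^{2}|\xi-\xi_{\ast}|^{-2}$. Cauchy--Schwarz in $\xi$ then controls this piece by a polynomial-in-$N$ multiple of $\|\langle x\rangle^{2}f\|_{L^{2}}$, and summation over $N\le N_{0}$ yields $\lesssim \langle t\rangle^{-31/20}\|\langle x\rangle^{2}f\|_{L^{2}}$.

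The exterior regime $|x|>|t|/2$ is easier: no stationary point lies in the relevant support, $|\nabla_{\xi}\Phi|\gtrsim|x|$, and two integrations by parts directly produce a factor $|x|^{-2}\lesssim \langle x\rangle^{-2}$ that is absorbed by the weight. The main technical obstacle is the anisotropic geometry of $\Phi''$: its transverse and radial scales differ by the factor $\langle\xi_{\ast}\rangle$, so both the stationary-phase amplitude and the integration by parts carry nontrivial $\langle\xi_{\ast}\rangle$-weights that have to be tracked carefully and balanced against the high-frequency threshold $N_{0}=\langle t\rangle^{1/20}$ in order to recover the precise exponent $31/20$ in \eqref{timedecay}.
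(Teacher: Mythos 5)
Your proposal is the standard stationary-phase proof of the half-Klein--Gordon dispersive estimate, which is exactly the route taken by the reference the paper defers to: the paper itself gives no proof of Proposition~\ref{timedecay-prop} and simply writes ``We refer to \cite{pusa} for the proof of \eqref{timedecay}.'' So the approach matches. One point of imprecision worth flagging: the lower bound $|\nabla_\xi\Phi|\gtrsim t|\xi-\xi_*|/\langle\xi_*\rangle$ is only valid in the directions transverse to $\xi_*$; along the radial direction the map $\xi\mapsto\xi/\langle\xi\rangle$ has singular value $\langle\xi\rangle^{-3}$, so the correct worst-case bound on the support $|\xi|\sim N\sim\langle\xi_*\rangle$ is $|\nabla_\xi\Phi|\gtrsim t\,\langle\xi_*\rangle^{-3}|\xi-\xi_*|$ (and when derivatives fall on $|\nabla\Phi|^{-2}\nabla\Phi$ one also picks up $\Phi''\sim t\langle\xi\rangle^{-1}$). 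These extra $\langle\xi_*\rangle$-powers are absorbed because you restrict to $N\le N_0=\langle t\rangle^{1/20}$, so the exponent $31/20=3/2+1/20$ still comes out, but they must be carried explicitly through the integration-by-parts bookkeeping; as stated the estimate ``$\sim t^{-2}\langle\xi_*\rangle^2|\xi-\xi_*|^{-2}$'' undercounts the loss.
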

We refer to \cite{pusa} for the proof of \eqref{timedecay}. 
As a consequence of Proposition~\ref{timedecay-prop}
and a priori assumption \eqref{assumption-apriori}, there exists $C$ such that 
\begin{align}\label{W2infty}
	\| \psi(t) \|_{W^{2,\infty}} \le 
	\| \Pi_+(D)\psi(t) \|_{W^{2,\infty}} + \|\Pi_-(D) \psi(t) \|_{W^{2,\infty}}
	\le C\ep_1(1+t)^{-\frac32}
\end{align}
for any $0\le t \le T$.

\subsection{Nonlinear term estimates}
Let us denote the Hartree nonlinear term by 
\begin{align*}
	\mathcal{N}(\psi_1,\psi_2,\psi_3)(s):= \left(|x|^{-1}*\left\langle \psi_3(s),\psi_2(s) \right\rangle \right)\psi_1(s).
\end{align*}
In the next lemma, we introduce several inequalities for the Hartree nonlinear term. The proof is an immediate consequence of the Hardy-Littlewood-Sobolev inequality, so we omit it.
\begin{lemma}[Hartree nonlinear term estimates]\label{lem:Har HK}
Let $\psi_i:\R^{1+3}\rightarrow \C^4$ for $i=1,2,3$.
Then,
\begin{align}
\left\| \mathcal{N}(\psi,\psi,\psi) \right\|_{H^m(\R^3)} &\les  \| \psi\|_{L_x^2}\|\psi\|_{H^m}\|\psi\|_{L_x^6}, \quad \text { for } m\in  \N \cup \{0\}, \nonumber \\ 
\left\| \mathcal{N}(\psi_1,\psi_2,\psi_3) \right\|_{H^2(\R^3)} &\les  \|\psi_1\|_{H^2} \left( \|\psi_2\|_{H^2}\|\psi_3\|_{L_x^6}
+ \|\psi_2\|_{L_x^6}\|\psi_3\|_{H^2}
\right), \label{Har Hk}\\ 
\left\| \mathcal{N}(\psi_1,\psi_2,\psi_3) \right\|_{W^{2,\infty}(\R^3)} &\les 
\|\psi_1\|_{W^{2,\infty}} \left( \|\psi_2\|_{H^{4}}\|\psi_3\|_{L_x^6}
+ \|\psi_2\|_{L_x^6}\|\psi_3\|_{H^{4}}\right). \nonumber
\end{align}
\end{lemma}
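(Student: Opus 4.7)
The plan is to treat all three estimates uniformly by combining the Hardy--Littlewood--Sobolev inequality for the Riesz potential $I_2 f = |x|^{-1}*f$ in $\mathbb{R}^3$, namely $\|I_2 f\|_{L^q}\les \|f\|_{L^p}$ whenever $\tfrac{1}{p}-\tfrac{1}{q}=\tfrac{2}{3}$, with Hölder's inequality to pair factors, a Leibniz expansion to distribute derivatives, and the Sobolev embeddings $H^1(\mathbb R^3)\hookrightarrow L^6$ and $H^2(\mathbb R^3)\hookrightarrow L^\infty$. Since $I_2$ is a smoothing operator of order $2$, derivatives falling on the convolution factor are essentially ``free'' provided they are placed on one of the inputs to the quadratic form $\langle\psi_3,\psi_2\rangle$ before convolving.

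For inequality \eqref{Har Hk} applied with $\psi_1=\psi_2=\psi_3=\psi$, I would first establish the auxiliary pointwise bound
\[
\bigl\||x|^{-1}\ast|\psi|^2\bigr\|_{L^\infty}\les \|\psi\|_{L^2}\|\psi\|_{L^6},
\]
by splitting the defining integral at a radius $R$: the inner piece $\int_{|y|<R}|y|^{-1}|\psi(x-y)|^2\,dy$ is bounded by Hölder using $\|\mathbf{1}_{|y|<R}|y|^{-1}\|_{L^{3/2}}\les R$ and $\||\psi|^2\|_{L^3}=\|\psi\|_{L^6}^2$, while the outer piece is bounded by $R^{-1}\|\psi\|_{L^2}^2$; optimizing in $R$ gives the claim. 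Then for the ``worst'' Leibniz term $(V*|\psi|^2)\nabla^m\psi$ I would apply Hölder with the $L^\infty$ bound above against $\|\nabla^m\psi\|_{L^2}\le\|\psi\|_{H^m}$. For the remaining terms of the form $(V*\langle\nabla^a\psi,\nabla^b\psi\rangle)\nabla^c\psi$ with $a+b+c=m$ and $c<m$, I would pair in $L^2$ via $\|V*(\cdots)\|_{L^3}\|\nabla^c\psi\|_{L^6}$, bound the first factor using HLS ($L^{3/2}\to L^3$) by $\|\nabla^a\psi\|_{L^3}\|\nabla^b\psi\|_{L^3}$, and absorb everything into $\|\psi\|_{L^2}\|\psi\|_{L^6}\|\psi\|_{H^m}$ using Sobolev interpolation.

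For \eqref{Har Hk} with three distinct inputs and for the third inequality, the same template applies, but now the factor $\psi_1$ carrying the full $H^2$ (resp.\ $W^{2,\infty}$) norm plays the role of a low-frequency placeholder: distribute at most two derivatives onto $\psi_1$ and onto the potential, and for every configuration pair the convolution factor (after HLS) with $\psi_1$ in either $L^2$ or $L^\infty$. Specifically, Sobolev embedding $H^2(\mathbb R^3)\hookrightarrow L^\infty$ lets one place up to two derivatives on whichever of $\psi_2,\psi_3$ carries the $H^2$ or $H^4$ norm while the other is taken in $L^6$, exactly matching the asymmetric right-hand sides.

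The only genuinely delicate step is the uniform $L^\infty$ bound on $V*|\psi|^2$ (and its differentiated cousins); once that is in place, every other estimate is a matter of choosing Hölder exponents compatible with the HLS gain of two derivatives. No time- or frequency-localization subtlety enters, which is consistent with the authors' remark that the proof is an immediate consequence of HLS; for this reason I expect the entire argument to fit comfortably in a short paragraph of detailed calculation.
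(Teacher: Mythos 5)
The paper omits the proof entirely (``The proof is an immediate consequence of the Hardy--Littlewood--Sobolev inequality, so we omit it''), so there is no official argument to compare against. Your strategy --- HLS for the Riesz potential $I_2 = |x|^{-1}\ast$ combined with Leibniz, H\"older, and the Sobolev embeddings $H^1\hookrightarrow L^6$, $H^2\hookrightarrow L^\infty$ --- is exactly what the authors have in mind, and your derivation of the pointwise bound $\|\,|x|^{-1}\ast|\psi|^2\|_{L^\infty}\lesssim\|\psi\|_{L^2}\|\psi\|_{L^6}$ by splitting the kernel at scale $R$ and optimizing is clean and correct.

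There is, however, a concrete slip in your treatment of the intermediate Leibniz terms in the $H^m$ estimate. You write that you would bound $\|V\ast\langle\nabla^a\psi,\nabla^b\psi\rangle\|_{L^3}$ ``using HLS ($L^{3/2}\to L^3$) by $\|\nabla^a\psi\|_{L^3}\|\nabla^b\psi\|_{L^3}$.'' But in $\mathbb{R}^3$ the HLS scaling for $|x|^{-1}\ast$ is $\tfrac1p-\tfrac1q=\tfrac23$, so $p=3/2$ pairs with $q=\infty$ (and is in fact the Lorentz endpoint $L^{3/2,1}\to L^\infty$, not $L^{3/2}\to L^\infty$); it does not pair with $q=3$, which would require the forbidden endpoint $p=1$. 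So either the target $L^3$ for the potential or the source exponent $L^{3/2}$ for the bilinear form must change. The fix is routine: for example, pair $\|(V\ast g)\nabla^c\psi\|_{L^2}\le\|V\ast g\|_{L^6}\|\nabla^c\psi\|_{L^3}$ and use the non-endpoint HLS $L^{6/5}\to L^6$ together with Sobolev interpolation for $\|\nabla^c\psi\|_{L^3}$, or replace the single HLS application by the same two-scale kernel split you already use for the $L^\infty$ case. With that correction the argument goes through for all three inequalities; be aware that the $W^{2,\infty}$ estimate requires one further application of AM--GM and $\|\cdot\|_{L^2}\lesssim\|\cdot\|_{H^4}$ to convert the geometric means arising from the kernel split into the sum of products stated on the right-hand side.
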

As a consequence, we have the following time decay estimates for the nonlinear term when $\psi$ is in $\sum_T$.
\begin{cor}\label{cor:Har}
	Let $\psi$ be solution to \eqref{maineq} satisfying the a priori assumption \eqref{assumption-apriori}. Then, 
	\begin{align}
		\begin{aligned}\label{NHk}
	\left\| \mathcal{N}(\psi,\psi,\psi)(s) \right\|_{H^k(\R^3)} &\les  \langle s\rangle^{-1+\delta_0}\ep_1^3, \\ 
	\left\| \mathcal{N}(\psi,\psi,\psi)(s) \right\|_{W^{2,\infty}(\R^3)} &\les  \langle s\rangle^{-\frac52}\ep_1^3. 
\end{aligned}
	\end{align}
\end{cor}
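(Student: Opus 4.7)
The proof is a direct consequence of Lemma \ref{lem:Har HK} combined with three ingredients already available: the a priori control \eqref{assumption-apriori}, the $L^2_x$-mass conservation \eqref{mass-conserve}, and the pointwise decay bound \eqref{W2infty}. No resonance analysis or weighted estimates are needed at this stage, so the proof will be short.

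For the $H^k$ estimate, the plan is to apply the first inequality of Lemma \ref{lem:Har HK} with $m=k$ and bound each of the three resulting factors separately. The $L^2_x$ factor is handled by mass conservation \eqref{mass-conserve}, which gives a uniform $\varepsilon_0 \lesssim \varepsilon_1$ bound. The $H^k$ factor picks up the $\langle s\rangle^{\delta_0}$ growth directly from the a priori assumption \eqref{assumption-apriori}. The $L^6_x$ factor will be bounded via Gagliardo--Nirenberg interpolation between the (conserved) $L^2_x$ norm and the $L^\infty_x$ norm; since \eqref{W2infty} supplies $\|\psi(s)\|_{L^\infty_x} \lesssim \varepsilon_1 \langle s\rangle^{-3/2}$, this will yield $\|\psi(s)\|_{L^6_x} \lesssim \varepsilon_1 \langle s\rangle^{-1}$. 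Multiplying the three factors produces the claimed rate $\langle s\rangle^{-1+\delta_0}\varepsilon_1^3$.

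For the $W^{2,\infty}$ estimate, I will apply the third inequality of Lemma \ref{lem:Har HK}. The decisive gain comes from the $W^{2,\infty}$ factor, which delivers $\langle s\rangle^{-3/2}$ directly from \eqref{W2infty}; combined with the $\langle s\rangle^{-1}$ bound on $\|\psi\|_{L^6_x}$ obtained exactly as above, this already accounts for the $\langle s\rangle^{-5/2}$ rate. The remaining $H^4$ factor must therefore be kept essentially uniformly bounded in time. To achieve this, I will interpolate between the conserved $L^2_x$ norm and the a priori $H^k$ bound, exploiting $k \ge 300$ to obtain $\|\psi(s)\|_{H^4} \lesssim \varepsilon_1 \langle s\rangle^{4\delta_0/k}$, whose growth factor is absorbed into the implicit constant.

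The only mild subtlety, and the step to be careful about, is precisely the treatment of intermediate Sobolev norms in the second bound: naively majorizing $\|\psi\|_{H^4}$ by $\|\psi\|_{H^k} \lesssim \langle s\rangle^{\delta_0}\varepsilon_1$ would cost an extra $\langle s\rangle^{\delta_0}$ and spoil the clean rate $\langle s\rangle^{-5/2}$. The interpolation against the conserved $L^2_x$ norm, made effective by the large choice of $k$, is what preserves the stated exponents.
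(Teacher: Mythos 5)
Your treatment of the $H^k$ estimate is correct and matches the paper exactly: apply the first inequality of Lemma~\ref{lem:Har HK} with $m=k$, use mass conservation for the $L^2$ factor, the a priori $H^k$ bound for the middle factor, and interpolation between $L^2$ and the pointwise decay \eqref{W2infty} to get $\|\psi(s)\|_{L^6}\lesssim \varepsilon_1\langle s\rangle^{-1}$. You also correctly identify the subtlety in the $W^{2,\infty}$ estimate: the naive bound $\|\psi\|_{H^4}\le\|\psi\|_{H^k}\lesssim\varepsilon_1\langle s\rangle^{\delta_0}$ costs a $\langle s\rangle^{\delta_0}$.

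However, your proposed fix for that subtlety does not work, and this is a genuine gap. Interpolating $H^4$ between $L^2$ and $H^k$ gives $\|\psi(s)\|_{H^4}\lesssim\varepsilon_1\langle s\rangle^{4\delta_0/k}$, and you assert that the factor $\langle s\rangle^{4\delta_0/k}$ ``is absorbed into the implicit constant.'' This is false: $\langle s\rangle^{4\delta_0/k}\to\infty$ as $s\to\infty$, and the implicit constant in a $\lesssim$ bound cannot depend on $s$. Your argument therefore only proves the strictly weaker estimate $\|\mathcal{N}(\psi,\psi,\psi)(s)\|_{W^{2,\infty}}\lesssim\langle s\rangle^{-5/2+4\delta_0/k}\varepsilon_1^3$, not the stated $\langle s\rangle^{-5/2}\varepsilon_1^3$.

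The correct ingredient, already present in the a priori norm $\Sigma_T^\pm$, is the $L_\xi^\infty$ component, which carries no time-growth factor: $\sup_{t}\|\langle\xi\rangle^{10}\widehat{\psi_\pm(t)}\|_{L_\xi^\infty}\lesssim\varepsilon_1$. From this one gets a genuinely time-uniform $H^4$ bound via Cauchy--Schwarz in $\xi$:
\begin{align*}
\|\psi_\pm(s)\|_{H^4}=\|\langle\xi\rangle^4\widehat{\psi_\pm}(s)\|_{L_\xi^2}
\le \|\langle\xi\rangle^{-6}\|_{L_\xi^2}\,\|\langle\xi\rangle^{10}\widehat{\psi_\pm}(s)\|_{L_\xi^\infty}\lesssim\varepsilon_1,
\end{align*}
since $\int_{\R^3}\langle\xi\rangle^{-12}\,d\xi<\infty$. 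Substituting this into the third inequality of Lemma~\ref{lem:Har HK} together with $\|\psi\|_{W^{2,\infty}}\lesssim\varepsilon_1\langle s\rangle^{-3/2}$ and $\|\psi\|_{L^6}\lesssim\varepsilon_1\langle s\rangle^{-1}$ yields the clean rate $\langle s\rangle^{-5/2}$. Replace the interpolation step by this argument and the proof is complete.
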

\begin{proof}
By the previous Lemma, we only suffice to bound $L^6$ norm. 
Interpolating a priori decay assumption \eqref{W2infty} and the conservation law \eqref{mass-conserve}, we have  
\begin{align*}
\|\psi(s)\|_{L_x^6(\R^3)} \les \langle s\rangle^{-1}\ep_1.
\end{align*}
Therefore Lemma \ref{lem:Har HK} finishes the proof of Corollary \ref{cor:Har}.
\end{proof}

Next, we establish frequency localized estimates, which will be used several times in the course of the energy estimates. The proof is quite standard (see \cite{pusa,chhwya}), but we provide the proof for self-containedness.
\begin{lemma}\label{Lem:FL} Let $\psi_i:\R^{1+3}\rightarrow \C^4$ for $i=1,2$\ and $N\in 2^{\Z}$ be a dyadic number. Then, we get
\begin{align*}
	\|P_{N}\langle\psi_{1},\psi_{2}\rangle\|_{L_x^{2}(\R^3)} &\les  N^{\frac{3}{2}}\bra{N}^{-5}\left\Vert \widehat{\langle D\rangle^{10}\psi_1}\right\Vert _{L_x^{\infty}}
	\left\Vert \widehat{\langle D\rangle^{10}\psi_2}\right\Vert _{L_x^{\infty}}, \\ 
	\|P_{N}\langle\psi_{1},\psi_{2}\rangle\|_{L_x^{\infty}(\R^3)} & \les\langle N\rangle^{-2}\|\psi_{1}\|_{W^{2,\infty}}\|\psi_{2}\|_{W^{2,\infty}},    \\ 
	\|P_{N}\langle\psi_{1},\psi_{2}\rangle\|_{L_x^{\infty}(\R^3)}&\les N^{3}\|\psi_{1}\|_{L_x^{2}}\|\psi_{2}\|_{L_x^{2}}.
\end{align*}
\end{lemma}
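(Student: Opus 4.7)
The plan is to dispatch the three bounds by separate standard tools: the first via a Fourier-side convolution estimate, the second via a Bernstein-type derivative trade on $W^{2,\infty}$, and the third via the $L^1\to L^\infty$ Bernstein inequality.

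For the first estimate, I rewrite $\bra{\psi_1,\psi_2}=\psi_1^\dagger\psi_2$ componentwise so that $\mathcal{F}\bra{\psi_1,\psi_2}$ becomes a sum of convolutions of the form $\overline{\wh{\psi_{1,j}}(-\cdot)}* \wh{\psi_{2,j}}$. I then pull ten powers of $\bra{\cdot}$ out of each factor, bound the remaining two pieces by the stated $L_\xi^\infty$ norms, and invoke the elementary convolution bound $\int \bra{\xi-\eta}^{-10}\bra{\eta}^{-10}\,d\eta\les\bra{\xi}^{-10}$ (obtained by splitting $\{|\eta|\le|\xi|/2\}$ and its complement). Restricting to the annulus $|\xi|\sim N$ and using Plancherel gives a factor $N^{3/2}$ from the measure of the support and $\bra{N}^{-10}$ from the pointwise Fourier bound, which is in fact stronger than the asserted $\bra{N}^{-5}$.

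For the second estimate, the Leibniz rule yields the product bound $\|\bra{\psi_1,\psi_2}\|_{W^{2,\infty}}\les\|\psi_1\|_{W^{2,\infty}}\|\psi_2\|_{W^{2,\infty}}$. I then combine this with a Bernstein-type embedding $\|P_N h\|_{L^\infty}\les\bra{N}^{-2}\|h\|_{W^{2,\infty}}$, which is proved by writing $P_N=P_N\brad^{-2}\brad^{2}$ and checking that the convolution kernel of $P_N\brad^{-2}$ has $L^1$-norm $\les\bra{N}^{-2}$ (Mikhlin/Bernstein scaling on the symbol $\rho_N(\xi)\bra{\xi}^{-2}$, uniformly bounded when $N\les 1$ and decaying like $N^{-2}$ when $N\gtrsim 1$). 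For the third estimate, Cauchy-Schwarz gives $\|\bra{\psi_1,\psi_2}\|_{L^1}\le\|\psi_1\|_{L^2}\|\psi_2\|_{L^2}$, and the standard Bernstein inequality $\|P_N g\|_{L^\infty}\les N^3\|g\|_{L^1}$, coming from $P_N g=\check{\rho}_N*g$ together with $\|\check{\rho}_N\|_{L^\infty}\les N^3$, closes the argument.

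No real obstacle is expected; the three estimates are all standard Littlewood--Paley / Bernstein consequences, and the entire lemma is really a bookkeeping package collecting the three scales of information ($L_\xi^\infty$ on the Fourier side, $W^{2,\infty}$, and $L^2$) that will be fed into the multilinear machinery of later sections. The only point that deserves mild care is the sesquilinear nature of $\bra{\cdot,\cdot}$ in the first step, where one has to properly track the complex conjugation and the reflection $\xi\mapsto-\xi$ inside the convolution before invoking the scalar pointwise decay estimate.
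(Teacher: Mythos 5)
Your proof is correct and follows essentially the same route as the paper: the first bound by writing $\mathcal{F}\langle\psi_1,\psi_2\rangle$ as a convolution, pulling out $\langle\cdot\rangle^{-10}$ factors, using $\langle\cdot\rangle^{-10}\ast\langle\cdot\rangle^{-10}\les\langle\cdot\rangle^{-10}$ and then restricting to $|\xi|\sim N$; the second by factoring $P_N=P_N\langle D\rangle^{-2}(1-\Delta)$, bounding the $L^1$ norm of the kernel of $P_N\langle D\rangle^{-2}$ by $\langle N\rangle^{-2}$, and applying Leibniz; the third via Cauchy--Schwarz together with Bernstein. Your observation that the first estimate actually delivers the stronger $\langle N\rangle^{-10}$ is accurate (the paper's displayed exponent $\langle N\rangle^{-5}$ is simply what is needed downstream), and the care you flag about the conjugation/reflection in $\mathcal{F}\overline{\psi_{1,j}}$ is the right point to be careful about.
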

\begin{proof}
By Plancherel's theorem and H\"older inequality, we see that
	%\[
	%\|P_{N}\langle\psi_{1},\psi_{2}\rangle\|_{L_{x}^{2}}\les\|\rho_{N}\|_{L_{\xi}^{2}}\|\mathcal{F}\langle\psi_{1},\psi_{2}\rangle\|_{L_{\xi}^{\infty}}\les N^{\frac{3}{2}}\ve_{1}^{2}
	%\]
	%and
\begin{align*}
	&\|P_{N}\langle\psi_{1},\psi_{2}\rangle\|_{L_x^{2}}^{2} 
	 \les\int_{\R^3}\rho_{N}^{2}(\xi)\abs{\mathcal{F}\langle\psi_{1},\psi_{2}\rangle(\xi)}^{2}d\xi\\
	 & \les \int_{\R^3}\rho_{N}^{2}(\xi) \left| \int_{\R^3} \langle\eta\rangle^{-10}\langle\xi-\eta\rangle^{-10} \Big\langle \widehat{\langle D\rangle^{10}\psi_1}(\eta),\widehat{\langle D\rangle^{10}\psi_2}(\xi-\eta) \Big\rangle d\eta \right|^2 d\xi\\ 
	 & \les \left\Vert \widehat{\langle D\rangle^{10}\psi_1}\right\Vert _{L_x^{\infty}}
	 \left\Vert \widehat{\langle D\rangle^{10}\psi_2}\right\Vert _{L_x^{\infty}} \int_{\R^3}\rho_{N}^{2}(\xi) \left|\int_{\R^3}\langle\eta\rangle^{-10}\langle\xi-\eta\rangle^{-10} d\eta \right|^{2}d\xi\\
	 & \les N^{3}\langle N\rangle^{-10}\left\Vert \widehat{\langle D\rangle^{10}\psi_1}\right\Vert _{L_x^{\infty}}
	 \left\Vert \widehat{\langle D\rangle^{10}\psi_2}\right\Vert _{L_x^{\infty}}.
	\end{align*}
Next, we estimate 
	\begin{align}\label{proof-a3}
	\begin{aligned}\|P_{N}\langle\psi_{1},\psi_{2}\rangle\|_{L_x^{\infty}} & =\sup_{x\in\R^3}\left|\int_{\R^3} e^{ix\cdot\xi}\rho_{N}(\xi)\mathcal{F}\langle\psi_{1},\psi_{2}\rangle(\xi) d\xi\right|\\
	 & =\sup_{x\in\R^3}\left|\int_{\R^3} e^{ix\cdot\xi}\rho_{N}(\xi)\langle\xi\rangle^{-2}\mathcal{F}\left[(1-\Delta)\langle\psi_{1},\psi_{2}\rangle\right](\xi)d\xi \right|\\
	 & \les\|\mathcal{F}^{-1}(\rho_{N}\langle\xi\rangle^{-2})*(1-\Delta)\langle\psi_{1},\psi_{2}\rangle\|_{L_{x}^{\infty}}\\
	 & \les\|\mathcal{F}^{-1}(\rho_{N}\langle\xi\rangle^{-2})\|_{L_{x}^{1}}\|(1-\Delta)\langle\psi_{1},\psi_{2}\rangle\|_{L_{x}^{\infty}}\\
	 & \les\langle N\rangle^{-2}\|\psi_{1}\|_{W^{2,\infty}}\|\psi_{2}\|_{W^{2,\infty}}.
	\end{aligned}
\end{align}
We also have
\begin{align*}
		\|P_{N}\langle\psi_{1},\psi_{2}\rangle\|_{L_x^{\infty}}&\les\|\rho_{N}\mathcal{F}\langle\psi_{1},\psi_{2}\rangle\|_{L_x^{1}}\les\|\rho_{N}\|_{L_x^{1}}\|\mathcal{F}\langle\psi_{1},\psi_{2}\rangle\|_{L_x^{\infty}}\\
		&\les N^{3}\|\psi_{1}\|_{L_x^{2}}\|\psi_{2}\|_{L_x^{2}}.
		\end{align*}
\end{proof}
As a direct consequence of the Lemma~\ref{Lem:FL}, we find time decay of localized bilinear term with $\psi$ satisfying the a priori assumption \eqref{assumption-apriori}.
\begin{cor}
	Let $\theta_j\in\{+,-\}$ and 
	$\psi_{j}$ satisfy  $\|\psi_{j}\|_{\Sigma_{T}^{\theta_j}}\les\ve_{1}$ for $j=1,2$. For a dyadic number $N\in 2^{\Z}$, we have 
	\begin{align}
	\|P_{N}\langle\psi_{1},\psi_{2}\rangle(s)\|_{L_{x}^{2}} & \les N^{\frac{3}{2}}\bra{N}^{-5}\ve_{1}^{2}, \label{norm-two} \\ 
	\|P_{N}\langle\psi_{1},\psi_{2}\rangle(s)\|_{L_{x}^{\infty}} & \les\min(\langle N\rangle^{-2}\langle s\rangle^{-3},N^{3})\ve_{1}^{2}.\label{norm-infty}
	\end{align}
\end{cor}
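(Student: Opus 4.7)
The corollary is an immediate synthesis of Lemma~\ref{Lem:FL} with the a priori bounds encoded in the $\Sigma_T^{\theta_j}$ norm and the time decay estimate \eqref{W2infty}. The plan is simply to invoke each of the three inequalities of Lemma~\ref{Lem:FL} and to replace the right-hand sides with the corresponding $\ve_1$-bounds coming from the hypothesis $\|\psi_j\|_{\Sigma_T^{\theta_j}}\lesssim \ve_1$.

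For \eqref{norm-two}, I would apply the first bound of Lemma~\ref{Lem:FL}. The factors $\bigl\|\widehat{\langle D\rangle^{10}\psi_j}\bigr\|_{L_x^\infty}$ are precisely the $L_\xi^\infty$ norms $\|\braxi^{10}\widehat{\psi_j(s)}\|_{L_\xi^\infty}$, and these are controlled by $\|\psi_j\|_{\Sigma_T^{\theta_j}}\lesssim \ve_1$ directly from the definition of the $\Sigma_T^{\theta_j}$-norm (the last term inside the bracket). Substituting gives the claimed $N^{3/2}\bra{N}^{-5}\ve_1^2$ bound with no time dependence needed.

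For \eqref{norm-infty}, I would prove the two competing bounds separately and then take their minimum. The first one, $\langle N\rangle^{-2}\langle s\rangle^{-3}\ve_1^2$, follows from the second inequality in Lemma~\ref{Lem:FL} combined with the pointwise decay \eqref{W2infty}: since $\psi_j = \Pi_{\theta_j}(D)\psi_j$ and the projectors are bounded on every $L^p$-based Sobolev space, Proposition~\ref{timedecay-prop} together with the $\Sigma_T^{\theta_j}$ bound yields $\|\psi_j(s)\|_{W^{2,\infty}}\lesssim \ve_1 \langle s\rangle^{-3/2}$, and multiplying two such factors produces the $\langle s\rangle^{-3}$. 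The second bound, $N^3 \ve_1^2$, follows from the third inequality in Lemma~\ref{Lem:FL} and the mass conservation \eqref{mass-conserve} (or simply the $H^k$-part of the $\Sigma_T^{\theta_j}$-norm at $s=0$ composed with the unitarity of $e^{\pm is\langle D\rangle}$), which controls $\|\psi_j(s)\|_{L_x^2}\lesssim \ve_1$.

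There is no real obstacle here; the only point worth double-checking is that the $W^{2,\infty}$ decay really is uniform across both projection components, so that it can be applied to each factor $\psi_j$ independently regardless of the sign $\theta_j$. This is indeed the content of \eqref{W2infty}, which bounds the sum of both projections. Taking the minimum of the two resulting bounds completes the proof of \eqref{norm-infty}.
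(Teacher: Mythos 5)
Your proposal matches the paper's (implicit) proof: the paper offers no argument for this corollary, presenting it as a direct consequence of Lemma~\ref{Lem:FL} combined with the $\Sigma_T^{\theta_j}$-norm control, which is exactly what you do. The derivations of \eqref{norm-two} and the $\langle N\rangle^{-2}\langle s\rangle^{-3}$ half of \eqref{norm-infty} are sound.

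One small repair is needed in your justification of $\|\psi_j(s)\|_{L_x^2}\lesssim\ve_1$ for the $N^3$ bound. The corollary hypothesis does not assume $\psi_j$ solves \eqref{maineq}, so mass conservation \eqref{mass-conserve} is not available; and the ``$H^k$ at $s=0$ composed with unitarity of $e^{\pm is\langle D\rangle}$'' step would only apply to a free evolution, not to a general $\psi_j$ with small $\Sigma_T^{\theta_j}$-norm (the $H^k$ piece of the norm only yields $\|\psi_j(s)\|_{L^2}\lesssim\langle s\rangle^{\delta_0}\ve_1$, which grows). The clean route is the $L_\xi^\infty$ piece of the $\Sigma_T^{\theta_j}$-norm:
\[
\|\psi_j(s)\|_{L_x^2}=\|\widehat{\psi_j(s)}\|_{L_\xi^2}\le\|\langle\xi\rangle^{-10}\|_{L_\xi^2}\,\|\langle\xi\rangle^{10}\widehat{\psi_j(s)}\|_{L_\xi^\infty}\lesssim\ve_1 ,
\]
which gives the uniform-in-$s$ bound you need. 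With this substitution the argument is complete.
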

% \begin{proof} 
%  first part of \eqref{esti-g-1}, by \eqref{proof-a3}, we suffice to show that 
% \begin{align*}
% 	\normo{\left(|x|^{-1}*|\psi_1(t)|^{2}\right)\psi_1(t)}_{W^{2,\infty}}\les\ve_{1}^{3}\bra{t}^{-4+\frac{4}{3}\de_{0}}.
% \end{align*}
% 	Indeed, using Proposition \ref{timedecay-prop} and Lemma \ref{hls-infty},
% 	we see that
% 	\begin{align*}
% 	&\normo{\left(|x|^{-1}*|\psi_1(t)|^{2}\right)\psi_1(t)}_{W^{2,\infty}} \\
% 	& \les\normo{\Pi_{\theta_{3}}(D)\left[\left(|x|^{-1}*\bra{\psi(t),\nabla_{x}^{2}\psi(t)}\right)\psi(t)\right]}_{L_{x}^{\infty}}\\
% 	&\hspace{4cm}+\normo{\Pi_{\theta_{3}}(D)\left[\left(|x|^{-1}*|\psi(t)|^{2}\right)\nabla_{x}^{2}\psi(t)\right]}_{L_{x}^{\infty}}\\
% 	 & \les\|\psi_0\|_{L_{x}^{2}}^{\frac{2-\ve}{3}}\|\psi(t)\|_{L_{x}^{\infty}}^{\frac{1+\ve}{3}}\|\psi(t)\|_{H^{2}}^{\frac{2+\ve}{3}}\|\psi(t)\|_{W^{2,\infty}}^{\frac{1-\ve}{3}}\|\psi(t)\|_{L_{x}^{\infty}}\\
% 	 &\hspace{4cm}+\|\psi(t)\|_{L_{x}^{2}}^{\frac{4}{3}}\|\psi(t)\|_{L_{x}^{\infty}}^{\frac{2}{3}}\|\psi(t)\|_{W^{2,\infty}}\\
% 	 & \les\ve_{1}^{3}\bra{t}^{-4+\frac{4}{3}\de_{0}}.
% 	\end{align*}
% 	For the second part of \eqref{esti-g-1} we estimate using \eqref{eq-esti-sobolev}
% 	\begin{align*}
% 	&\Big\|P_{N}\langle \left(|x|^{-1}*|\psi_1(t)|^{2}\right)\psi_1(t),\psi_{2}(t)\rangle \Big\|_{L_{x}^{\infty}(\R^3)} \\ 
% 	&\les N^{3}\|\left(|x|^{-1}*|\psi_1(t)|^{2}\right)\psi_1(t)\|_{L_{x}^{2}}\|\psi_{2}(t)\|_{L_{x}^{2}}
% 	% \les N^{3}\bra{t}^{-1}\|\psi_0\|_{L_{x}^{2}}^{\frac{7}{3}}\ve_{1}^{\frac{5}{3}}
% 	\les N^{3}\bra{t}^{-1}\ve_{1}^{4}.
% 	\end{align*}
% \end{proof}
Also, the time decay estimates for the frequency localized Hartree term can be easily obtained from Lemma~\ref{Lem:FL} and Corollary~\ref{cor:Har}.
\begin{cor} Let $\psi$ and $\phi$ satisfy \eqref{assumption-apriori}. For a dyadic number $N\in 2^{\Z}$, we have 
	\begin{align}
	\| P_N \mathcal{N}(\psi,\psi,\psi)(s) \|_{L_x^2} &\les \min\left( N^\frac32\langle s\rangle^{-\frac52}, \langle N\rangle^{-10} \langle s\rangle^{-1}  \right)\ep_1^3,\label{esti-g-2}\\ 
	\left\|P_{N}\big\langle \mathcal{N}(\psi,\psi,\psi)  ,\phi\big\rangle(s)\right\|_{L_{x}^{\infty}}
	&\les\min\left( \langle N\rangle^{-2}\langle s\rangle^{-4}, N^{3}\langle s\rangle^{-1} \right)\ve_{1}^{4}.\label{esti-g-1}
\end{align}
\end{cor}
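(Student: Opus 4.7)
The strategy is to split each estimate into a low-frequency regime (in which a Bernstein-type inequality allows one to trade an $L^\infty$ norm for an $N^{3/2}$ factor, or an $L^2$ norm for an $N^3$ factor) and a high-frequency regime (in which the dyadic multiplier provides a gain $\langle N\rangle^{-m}$ at the cost of $H^m$ regularity), and to take the minimum. The inputs are exactly Lemma \ref{Lem:FL} and Corollary \ref{cor:Har}, together with the pointwise decay \eqref{W2infty}.

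For \eqref{esti-g-2}, set $f := \mathcal{N}(\psi,\psi,\psi)$. The Bernstein estimate $\|P_N f\|_{L_x^2} \lesssim N^{3/2}\|f\|_{L_x^\infty}$ combined with the second inequality of \eqref{NHk} produces $N^{3/2}\langle s\rangle^{-5/2}\ep_1^3$. On the other hand, $\|P_N f\|_{L_x^2} \lesssim \langle N\rangle^{-m}\|f\|_{H^m}$ for any $m \le k$, and the first inequality of \eqref{NHk} (applicable since $k \ge 300$) yields $\langle N\rangle^{-m}\langle s\rangle^{-1+\delta_0}\ep_1^3$. Choosing $m$ slightly larger than $10$ and using $\delta_0 < 1/100$ to absorb the $\langle s\rangle^{\delta_0}$ loss into a fractional power of $\langle N\rangle$ leaves the desired bound $\langle N\rangle^{-10}\langle s\rangle^{-1}\ep_1^3$. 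Taking the minimum of the two branches gives \eqref{esti-g-2}.

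For \eqref{esti-g-1}, I invoke Lemma \ref{Lem:FL} directly with $\psi_1 = \mathcal{N}(\psi,\psi,\psi)$ and $\psi_2 = \phi$. The second line of the lemma, together with $\|\mathcal{N}\|_{W^{2,\infty}} \lesssim \langle s\rangle^{-5/2}\ep_1^3$ from Corollary \ref{cor:Har} and $\|\phi\|_{W^{2,\infty}} \lesssim \langle s\rangle^{-3/2}\ep_1$ from \eqref{W2infty}, gives the high-frequency branch $\langle N\rangle^{-2}\langle s\rangle^{-4}\ep_1^4$. The third line, together with $\|\mathcal{N}\|_{L_x^2} \lesssim \langle s\rangle^{-1}\ep_1^3$ (obtained from the first line of Lemma \ref{lem:Har HK} with $m=0$, using $\|\psi\|_{L_x^2} \lesssim \ep_1$ from \eqref{mass-conserve} and $\|\psi\|_{L_x^6} \lesssim \langle s\rangle^{-1}\ep_1$ from interpolating \eqref{mass-conserve} with \eqref{W2infty}) and $\|\phi\|_{L_x^2} \lesssim \ep_1$, gives the low-frequency branch $N^3\langle s\rangle^{-1}\ep_1^4$. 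Taking the minimum completes the proof.

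No significant obstacle is anticipated; the argument is a routine interpolation between the bilinear inequalities of Lemma \ref{Lem:FL} and the Hartree bounds of Corollary \ref{cor:Har}. The only minor book-keeping subtlety is the $\langle s\rangle^{\delta_0}$ loss in the high-frequency branch of \eqref{esti-g-2}, which is painlessly absorbed by the surplus regularity $k \ge 300$.
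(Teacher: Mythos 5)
Your argument for \eqref{esti-g-1} is correct. For \eqref{esti-g-2}, however, the low-frequency branch rests on a Bernstein inequality that does not hold: one cannot control $\|P_N f\|_{L_x^2}$ by $N^{3/2}\|f\|_{L_x^\infty}$. Bernstein runs the other way ($\|P_N f\|_{L_x^\infty}\lesssim N^{3/2}\|P_N f\|_{L_x^2}$), and a Gaussian $f(x)=e^{-\epsilon|x|^2}$ with $N\sim\sqrt{\epsilon}$ has $\|P_N f\|_{L^2}\sim\epsilon^{-3/4}$ while $N^{3/2}\|f\|_{L^\infty}\sim\epsilon^{3/4}$, so the inequality you invoke fails by an arbitrary factor. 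The correct way to extract a factor $N^{3/2}$ from a frequency-localized $L^2$ norm is $\|P_N f\|_{L^2}\lesssim N^{3/2}\|\widehat{f}\,\|_{L_\xi^\infty}$, but that requires a pointwise bound on the Fourier transform $\widehat{\mathcal{N}}$, not the physical bound $\|\mathcal{N}\|_{L_x^\infty}$ furnished by Corollary~\ref{cor:Har}; these are genuinely different objects, especially given the low-frequency singularity of $\widehat{|x|^{-1}*|\psi|^2}$. As written, the first branch of \eqref{esti-g-2} is not proved.

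There is also a separate gap in the high-frequency branch: the step of "absorbing the $\langle s\rangle^{\delta_0}$ loss into a fractional power of $\langle N\rangle$" requires $\langle N\rangle^{m-10}\gtrsim\langle s\rangle^{\delta_0}$, which fails whenever $\langle N\rangle$ is not large relative to a small power of $\langle s\rangle$ (already at $N=1$ the left side is $O(1)$ while the right side grows). The statement can still be closed by a case split comparing the two branches you derive against the stated minimum in the regimes of small and large $N$ separately, but that comparison must be supplied, and even then the low-frequency branch needs a correct derivation before the proof of \eqref{esti-g-2} is complete.
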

% \begin{proof}
% \begin{align*}
% \| P_N \mathcal{N}(\psi,\psi,\psi)(s) \|_{L^2(\R^3)}
% \les N^\frac32 \| \mathcal{N}(\psi,\psi,\psi)(s) \|_{L^\infty(\R^3)} \\ 
% \| P_N \mathcal{N}(\psi,\psi,\psi)(s) \|_{L^2(\R^3)}
% \les \langle N\rangle^{-10} \| \mathcal{N}(\psi,\psi,\psi)(s) \|_{H^{10}(\R^3)} \\ 
% \end{align*}
% \end{proof}

We close this section by introducing a useful lemma about pseudo-product operators which will repeatedly used in our analysis.
\begin{lemma}[Coifman-Meyer operator estimates]\label{kernel}
Assume that a multiplier $\textbf{m}$ satisfies 
\[
\|\textbf{m}\|_{\rm CM} := \left\Vert \iint_{\mathbb{R}^{3+3}}\mathbf{m}(\xi,\eta)e^{ix\cdot\xi}e^{iy\cdot\eta}d\xi d\eta\right\Vert _{L_{x,y}^{1}(\R^{3}\times\R^3)} < \infty.
\]
Then, for $\frac{1}{p}+\frac{1}{q}=\frac{1}{2}$,
\begin{align*}
\left\Vert \int_{\mathbb{R}^{3}}\mathbf{m}(\xi,\eta)\widehat{\psi}(\xi\pm\eta)\widehat{\phi}(\eta)d\eta\right\Vert _{L_{\xi}^{2}}\les \|\textbf{m}\|_{\rm CM}\|\psi\|_{L^{p}}\|\phi\|_{L^{q}},
\end{align*}
and for $\frac{1}{p}+\frac{1}{q}+\frac{1}{r} =1$,
\begin{align*}
	\left| \iint_{\mathbb{R}^{3+3}}\mathbf{m}(\eta,\sigma)\widehat{\psi_1}(\eta\pm\sigma)\widehat{\psi_2}(\eta)\wh{\psi_3}(\sigma) d\eta  d\sigma \right| \les \|\textbf{m}\|_{\rm CM}\|\psi_1\|_{L^{p}}\|\psi_2\|_{L^{q}}\|\psi_3\|_{L^{r}}.
\end{align*}
%Moreover, If
%\[
%\left\Vert \iint_{\mathbb{R}^{3+3+3}}\mathbf{m}(\xi,\eta,\sigma)e^{ix\cdot\xi}e^{iy\cdot\eta}e^{iz\cdot\sigma}d\xi d\eta d\sigma \right\Vert _{L_{x,y,z}^{1}}\le C_{\mathbf{m}},
%\]
%we obtain
%\begin{align*}
%	\left\Vert \int_{\mathbb{R}^{3}}\mathbf{m}(\xi,\eta,\sigma)\widehat{v_1}(\xi-\eta)\widehat{v_2}(\eta\pm\sigma) v_3(\sigma)d\eta d \sigma \right\Vert _{L_{\xi}^{2}}\les C_{\mathbf{m}}\|v_1\|_{L_{x}^{p}}\|v_2\|_{L_{x}^{q}} \|v_3\|_{L_{x}^{r}}
%		\end{align*}
%	for $\frac1p + \frac1q + \frac1r =\frac12$.
\end{lemma}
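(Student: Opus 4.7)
The plan is to reduce both bounds to Minkowski and H\"older after representing the symbol $\mathbf{m}$ as a continuous superposition of pure plane-wave symbols. Writing $K(x,y):=(2\pi)^{-6}\check{\mathbf{m}}(x,y)$, one has $\mathbf{m}(\xi,\eta)=\iint K(x,y)\,e^{ix\cdot\xi+iy\cdot\eta}\,dx\,dy$ and the definition of $\|\mathbf{m}\|_{\rm CM}$ gives $\|K\|_{L^1_{x,y}}\sim\|\mathbf{m}\|_{\rm CM}$. It therefore suffices to prove each inequality with $\mathbf{m}$ replaced by the elementary symbol $\mathbf{m}_{x,y}(\xi,\eta):=e^{ix\cdot\xi+iy\cdot\eta}$ uniformly in $(x,y)$, and then integrate against $|K|$.

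For the bilinear estimate, with $\mathbf{m}=\mathbf{m}_{x,y}$ one factors $e^{ix\cdot\xi}$ outside the $\eta$-integral and absorbs $e^{iy\cdot\eta}$ into the translate $\widehat{\phi(\cdot+y)}(\eta)=e^{iy\cdot\eta}\widehat{\phi}(\eta)$. The remaining integral $\int\widehat{\psi}(\xi\pm\eta)\widehat{\phi(\cdot+y)}(\eta)\,d\eta$ is, up to a reflection depending on the sign in $\xi\pm\eta$, the Fourier transform of a product $\psi(z)\phi(\pm y\mp z)$. Plancherel then gives, for each fixed $(x,y)$,
\[
\Big\|\int \mathbf{m}_{x,y}(\xi,\eta)\widehat{\psi}(\xi\pm\eta)\widehat{\phi}(\eta)\,d\eta\Big\|_{L^2_\xi}\sim \|\psi(z)\phi(\pm y\mp z)\|_{L^2_z},
\]
and H\"older with $\tfrac1p+\tfrac1q=\tfrac12$ bounds the right side by $\|\psi\|_{L^p}\|\phi\|_{L^q}$, independently of $(x,y)$ since translation and reflection preserve $L^p$ norms. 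Minkowski's integral inequality against $|K(x,y)|$ then closes the estimate.

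For the trilinear estimate, the same substitutions $e^{ix\cdot\eta}\widehat{\psi_2}(\eta)=\widehat{\psi_2(\cdot+x)}(\eta)$ and $e^{iy\cdot\sigma}\widehat{\psi_3}(\sigma)=\widehat{\psi_3(\cdot+y)}(\sigma)$, together with $\widehat{\psi_1}(\eta\pm\sigma)=\int\psi_1(u)e^{-iu\cdot(\eta\pm\sigma)}du$, collapse the $(\eta,\sigma)$ integrations into physical translates and produce
\[
\iint \mathbf{m}_{x,y}(\eta,\sigma)\widehat{\psi_1}(\eta\pm\sigma)\widehat{\psi_2}(\eta)\widehat{\psi_3}(\sigma)\,d\eta\,d\sigma = c\int\psi_1(u)\,\psi_2(x\mp u)\,\psi_3(y\mp u)\,du.
\]
H\"older in $u$ with $\tfrac1p+\tfrac1q+\tfrac1r=1$ controls the right side by $\|\psi_1\|_{L^p}\|\psi_2\|_{L^q}\|\psi_3\|_{L^r}$ uniformly in $(x,y)$; integrating against $|K(x,y)|$ and using $\|K\|_{L^1}\sim\|\mathbf{m}\|_{\rm CM}$ completes the proof. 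There is no substantial obstacle here: the whole argument is Fourier-inversion plus Plancherel/Fubini plus H\"older. The only real bookkeeping is tracking the sign in $\pm$, which affects the direction of translation but leaves the $L^p$ norms invariant.
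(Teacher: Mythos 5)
Your proof is correct. The paper states Lemma~\ref{kernel} without giving a proof---it is quoted as a standard pseudo-product/Coifman--Meyer estimate---so there is no argument in the text to compare against, but the one you supply is the standard and essentially canonical route for the kernel-$L^{1}$ version of the result: write $\mathbf{m}(\xi,\eta)=\iint K(x,y)e^{\pm i(x\cdot\xi+y\cdot\eta)}\,dx\,dy$ with $\|K\|_{L^{1}}\sim\|\mathbf{m}\|_{\rm CM}$, observe that an elementary modulation symbol turns the pseudo-product into an ordinary product of translated/reflected functions, apply Plancherel (bilinear case) or Fubini with Fourier inversion (trilinear case), and finish with H\"older in the physical variable and Minkowski in $(x,y)$. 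The sign bookkeeping has a couple of inconsequential slips: in the bilinear estimate the $-$ case produces $\psi(z)\phi(z+y)$ rather than $\psi(z)\phi(-y+z)$, and in the trilinear estimate the $\psi_{2}$ translate is $\psi_{2}(x-u)$ for both choices of $\pm$, not $\psi_{2}(x\mp u)$; but, as you already note, every one of these is a pure translation or reflection and hence leaves the $L^{p}$ norm unchanged, so the estimate closes exactly as claimed.
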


%  \begin{lemma}
% Let $f_{\theta}\in C([0,T],H^{n})$ satisfy a priori assumption \eqref{assumption-apriori} for $\theta \in \{\pm \}$. Then we have
% \begin{align}
% \Big\| P_{N}\partial_{t}f_{\theta}(t,\cdot)|\Big\|_{L_{x}^{2}}\les N^{\frac{3}{2}}\langle N\rangle^{-10}\brat^{-1}\ve_{1}^{3}.\label{eq-esti-ft}
% \end{align}
%  Let $H_{\theta_{3}}(\psi(t)):=\Pi_{\theta_{3}}(D)\left[\left(|x|^{-1}*|\psi(t)|^{2}\right)\psi(t)\right]$ and $\thet,\theth \in \{ +,-\}$.
% Then
% \begin{align}
% \|P_{N}\langle H_{\theta_3}(\psi(t)),\psi_{\thet}(t)\rangle\|_{L_{x}^{\infty}}\les\min\left(N^{3}\langle t\rangle^{-1}\ve_{1}^{4},\langle N\rangle^{-2}\langle t\rangle^{-4+\frac{4}{3}\de_{0}}\ve_{1}^{4}\right).\label{esti-g-1}
% \end{align}
% \end{lemma}

\section{Null-structure}
To prove Theorem \ref{mainthm}, we have to use structure of the equation obtained from the Dirac projection operator \eqref{eq-def-proj} and the resonance function. Lemmas in this section will be used throughout the proofs of Section \ref{sec-energy} and \ref{sec-infty}.

From \eqref{eq-def-proj}, the symbol of Dirac projection operators are given by 
\begin{align*}
 \Pi_{\pm}(\xi) = \frac12 \left( I \pm \frac{\al \cdot \xi + \beta}{\braxi}\right).
\end{align*}
The symbol are bounded in the sense that 
\begin{align}\label{bound of projection}
	\left| \nabla^n \Pi_{\pm}(\xi) \right| \lesssim \frac{1}{\langle \xi\rangle^{n-1}},
\end{align}
which implies the boundedness of  Dirac projection operators in the Sobolev spaces: For $1<p<\infty$ and $s\ge 0$,
\begin{align*}
	\| \Pi_{\pm}(D)\psi \|_{W^{s,p}} \les \| \psi\|_{W^{s,p}} .
\end{align*}
Furthermore, one can easily verify the null structure  
\begin{align}\label{spinorial}
	\Pi_{\pm}(\xi)\Pi_{\mp}(\xi)=0,
\end{align}
which proved very useful especially when we deal with bilinear operators in the nonlinear term (\cite{anfosel,anfosel2,Bejenaru2015,Bejenaru2017}). 
\begin{lemma}\label{smallness}
For $\xi,\eta\in\mathbb{R}^{3}$ satisfying $|\eta|\ll |\xi|$, we have
	\begin{align}\label{decay-null}
		\Big|\nabla_{\eta}^m\nabla_{\xi}^{n}\left[\Pi_{\pm}(\xi)\Pi_{\mp}(\xi-\eta)\right]\Big|&\les |\eta|^{1-m}\max\left(\frac{1}{\braxi^{n+1}},\frac{1}{\bra{\xi- \eta}^{n+1}}\right).
	\end{align}
\end{lemma}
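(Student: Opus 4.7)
The plan rests on the spinorial null identity $\Pi_\pm(\xi)\Pi_\mp(\xi) = 0$ from \eqref{spinorial}, which lets us rewrite
\[
\Pi_\pm(\xi)\Pi_\mp(\xi-\eta) \;=\; \Pi_\pm(\xi)\bigl[\Pi_\mp(\xi-\eta) - \Pi_\mp(\xi)\bigr] \;=\; -\Pi_\pm(\xi)\int_0^1 \eta\cdot(\nabla\Pi_\mp)(\xi - s\eta)\,ds
\]
via the fundamental theorem of calculus. This exposes the $|\eta|$ prefactor that the lemma demands in the case $m=0$.

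For $m=0$ with $n\geq 0$, I would apply $\nabla_\xi^n$ under the integral and expand by Leibniz, producing a sum of terms of the form $(\nabla^{n_1}\Pi_\pm)(\xi)\otimes\eta\cdot(\nabla^{n_2 + 1}\Pi_\mp)(\xi - s\eta)$ with $n_1 + n_2 = n$. Here I would invoke a mildly sharpened version of \eqref{bound of projection}, namely $|\nabla^k\Pi_\pm(\zeta)|\les\langle\zeta\rangle^{-k}$ for all $k\geq 1$ (and $|\Pi_\pm|\les 1$), which follows directly from \eqref{eq-def-proj} since $\xi/\langle\xi\rangle$ and $1/\langle\xi\rangle$ have $k$-th derivatives of size $O(\langle\xi\rangle^{-k})$ for $k\geq 1$. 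Since $\langle\xi - s\eta\rangle\sim\langle\xi\rangle\sim\langle\xi-\eta\rangle$ uniformly in $s\in[0,1]$ when $|\eta|\ll|\xi|$, each summand is controlled by $\langle\xi\rangle^{-(n_1+n_2+1)} = \langle\xi\rangle^{-(n+1)}$ (the $n_1=0$ term using $|\Pi_\pm|\les 1$), so the integral is bounded by $|\eta|\,\langle\xi\rangle^{-(n+1)}$, as required.

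For $m\geq 1$ the null identity is unnecessary. One has $\nabla_\eta^m\Pi_\mp(\xi-\eta) = (-1)^m(\nabla^m\Pi_\mp)(\xi-\eta)$, and Leibniz in $\nabla_\xi^n$ produces terms bounded by $\langle\xi\rangle^{-n_1}\langle\xi-\eta\rangle^{-(m+n_2)}\les\langle\xi\rangle^{-(m+n)}$ by the sharpened derivative bound. To match the desired $|\eta|^{1-m}\langle\xi\rangle^{-(n+1)}$ form, I factor $\langle\xi\rangle^{-(m+n)} = \langle\xi\rangle^{-(n+1)}\cdot\langle\xi\rangle^{-(m-1)}$ and use $|\eta|\leq\langle\xi\rangle$ (automatic from $|\eta|\ll|\xi|$) to estimate $\langle\xi\rangle^{-(m-1)}\leq|\eta|^{1-m}$. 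The whole argument is algebraic, and the only subtle point is confirming that $|\nabla^k\Pi_\pm|\les\langle\xi\rangle^{-k}$ for $k\geq 1$, which is strictly sharper than what is recorded in \eqref{bound of projection} but follows by inspection of \eqref{eq-def-proj}; everything else is routine Leibniz bookkeeping.
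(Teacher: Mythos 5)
Your proof is correct, and it takes a genuinely different route from the paper's. The paper expands the matrix product $4\Pi_{+}(\xi)\Pi_{-}(\xi-\eta)$ explicitly using the algebra of the Dirac matrices, obtaining a sum of five terms (differences like $\xi/\langle\xi\rangle - (\xi-\eta)/\langle\xi-\eta\rangle$, $1/\langle\xi\rangle - 1/\langle\xi-\eta\rangle$, plus terms already carrying a factor of $\eta$), then bounds each term separately, applying the mean value theorem to the two difference terms. You instead exploit the spinorial null identity $\Pi_\pm(\xi)\Pi_\mp(\xi)=0$ from \eqref{spinorial} directly: writing $\Pi_\pm(\xi)\Pi_\mp(\xi-\eta) = \Pi_\pm(\xi)\bigl[\Pi_\mp(\xi-\eta)-\Pi_\mp(\xi)\bigr]$ and applying the fundamental theorem of calculus in one stroke extracts the $|\eta|$ prefactor abstractly, without ever computing the matrix product; the rest is Leibniz plus symbol-class estimates. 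Your argument avoids the Dirac-matrix bookkeeping entirely (and, incidentally, sidesteps a sign slip in the paper's expansion of the $(\alpha\cdot\xi)(\alpha\cdot\eta)$ and $(\alpha\cdot\eta)\beta$ terms, which is harmless for the estimate but present). You are also right that the sharpened bound $|\nabla^k\Pi_\pm(\zeta)|\les\langle\zeta\rangle^{-k}$ for $k\ge 1$ holds and is what is really being used — \eqref{bound of projection} as stated is slightly lossy — and your observation that for $m\ge 1$ the null structure is not needed (since $|\eta|^{1-m}$ is no longer a small factor) is a correct simplification. The two approaches trade the paper's explicit display of the null form against your more uniform and scalable handling of all the derivative orders $m,n$.
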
 
 %This lemma shows us that the derivative $\nabla_\xi$ preserves the bound  of $\eta$. 

\begin{proof}
We only consider $\pm=+$.
We compute  
%	\begin{align*}
%		& \Big|4\Pi_{+}(\xi)\Pi_{-}(\eta)\Big|\\
%		& =\left|\left(I+\frac{\al\cdot\xi+\beta}{\braxi}\right)\left(I-\frac{\al\cdot\eta+\beta}{\langle\eta\rangle}\right)\right|\\
%		& =\left|I+\frac{\al\cdot\xi+\beta}{\braxi}-\frac{\al\cdot\eta+\beta}{\langle\eta\rangle}-\frac{(\al\cdot\xi)(\al\cdot\eta)+\xi\cdot\al\beta+\eta\cdot\beta\al+I}{\braxi\langle\eta\rangle}\right|\\
%		& =\left|\al\cdot\left(\frac{\xi}{\braxi}-\frac{\eta}{\langle\eta\rangle}\right)+\beta\left(\frac{1}{\braxi}-\frac{1}{\langle\eta\rangle}\right) \right.\\
%		&\left. \qquad\qquad\qquad +\frac{\sum_{j\neq k}(\xi_{j}\eta_{k}-\xi_{k}\eta_{j})\al^{j}\al^{k}+(\xi-\eta)\al\beta+\Big(\langle\xi\rangle\langle\eta\rangle-\xi\cdot\eta-1\Big)I}{\braxi\langle\eta\rangle}\right|\\
%		& \les\frac{|\xi-\eta|}{\braxi}.
%	\end{align*}
 	\begin{align*}
	& 4\Pi_{+}(\xi) \Pi_{-}(\xi - \eta) \\
	&= \left( I + \frac{\al \cdot \xi + \beta}{\braxi}\right) \left( I - \frac{\al \cdot (\xi - \eta) + \beta}{\langle \xi - \eta \rangle}\right) \\
%	&=  I+ \frac{\al \cdot \xi + \beta}{\braxi} - \frac{\al \cdot ( \xi  - \eta )+ \beta}{\langle \xi - \eta \rangle} - \frac{(\al \cdot \xi)(\al \cdot  (\xi - \eta)) +  \xi\cdot \al \beta + (\xi - \eta) \cdot \beta \al + I}{\braxi \langle \xi - \eta \rangle} \\
%	&= I+ \al \cdot \left( \frac{\xi}{\braxi} - \frac{\xi - \eta}{\langle \xi - \eta \rangle}  \right)  + \beta \left( \frac1\braxi - \frac1{\langle \xi -\eta \rangle}  \right) - \frac{   (\al \cdot \xi)(\al \cdot \eta) + \eta \al \beta + \braxi^2I}{\braxi \langle \xi - \eta \rangle} \\
	&=  \al \cdot \left( \frac{\xi}{\braxi} - \frac{\xi - \eta}{\langle \xi - \eta \rangle}  \right)  + \beta \left( \frac1\braxi - \frac1{\langle \xi -\eta \rangle}  \right) - \frac{   (\al \cdot \xi)(\al \cdot \eta)  }{\braxi \langle \xi - \eta \rangle}\\
	&\hspace{6cm} + \frac{    (\al \cdot \eta) \beta }{\braxi \langle \xi - \eta \rangle} +  \frac{(\eta -2\xi)\cdot\eta I}{\langle \xi -\eta \rangle  \left( \braxi + \langle \xi -\eta \rangle \right)}.
\end{align*}
For the first term, we apply the mean value theorem
\begin{align*}
\frac{ \xi}{\braxi} - \frac{ \xi - \eta}{\langle \xi - \eta \rangle} = \int_0^1 \frac{1}{\xi_\lambda}
\left(I-\frac{\xi_\lambda\otimes\xi_\lambda}{\langle \xi_\lambda \rangle^2}\right) \eta d\lambda,
\end{align*}
where $\xi_\lambda=\xi+\lambda(\xi-\eta)$. Then, a direct computation gives that 
\begin{align*}
	\left| \nabla_\xi^n \left( \frac{ \xi}{\braxi} - \frac{ \xi - \eta}{\langle \xi - \eta \rangle}  \right) \right| \les  \frac{| \eta|}{\langle \xi  \rangle^{n+1}}.
\end{align*}
The second term can be estimated similarly.  
The estimates for the remaining three terms are easily dealt with thanks to \eqref{bound of projection}.
\end{proof}

Using the Lemma~\ref{smallness}, we get the following bilinear estimates.
\begin{lemma}\label{lem-null-bilinear} Let $N \in 2^{\Z}$ and $\theta \in \{ +,- \}$. Assume that $\|\psi_1\|_{\sum_T^{\theta}}, \|\psi_2\|_{\sum_T^{-\theta}}\les \ep_1$. Then we get
	\begin{align}\label{null-bilinear}
		\|P_{N}\bra{\Pi_{\theta}(D)\psi_{1}(t),\Pi_{-\theta}(D)\psi_{2}(t)}\|_{L_{x}^{\infty}}\les N\bra{N}^{-2}\bra{t}^{-3+ \frac{\de_0}{10}}\ve_{1}^{2}.
	\end{align}
\end{lemma}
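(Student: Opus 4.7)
I would split cases at $N \sim 1$. For $N\gtrsim 1$ the claim is immediate from Lemma~\ref{Lem:FL}: by the $W^{2,\infty}$-boundedness of $\Pi_\pm(D)$ (a consequence of \eqref{bound of projection}) and the a priori pointwise decay \eqref{W2infty}, one has
\[
\|P_N\langle\Pi_\theta(D)\psi_1,\Pi_{-\theta}(D)\psi_2\rangle\|_{L_x^\infty}
\lesssim \langle N\rangle^{-2}\|\psi_1\|_{W^{2,\infty}}\|\psi_2\|_{W^{2,\infty}}
\lesssim \langle N\rangle^{-2}\langle t\rangle^{-3}\ve_1^2,
\]
and since $\langle N\rangle^{-2}\le N\langle N\rangle^{-2}$ when $N\gtrsim 1$, this regime is done. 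The main content of the lemma is therefore the case $N\ll 1$, where one must gain an extra factor of $N$ using the spinorial null structure.

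For $N\ll 1$, the bilinear form in Fourier variables has symbol $M(\xi,\eta):=\Pi_\theta(\eta)\Pi_{-\theta}(\eta+\xi)$. Since $\Pi_\theta(\eta)\Pi_{-\theta}(\eta)=0$ by \eqref{spinorial}, I would write
\[
M(\xi,\eta)=\Pi_\theta(\eta)\bigl[\Pi_{-\theta}(\eta+\xi)-\Pi_{-\theta}(\eta)\bigr] = \Pi_\theta(\eta)\int_0^1 \xi\cdot\nabla\Pi_{-\theta}(\eta+s\xi)\,ds,
\]
and Lemma~\ref{smallness} together with \eqref{bound of projection} would yield scale-invariant Coifman--Meyer bounds $|\partial_\xi^\alpha\partial_\eta^\beta M|\lesssim N^{1-|\alpha|}\langle\eta\rangle^{-1-|\beta|}$ on the dyadic support $|\xi|\sim N$, $|\eta|\sim M\gtrsim N$, and the size bound $|M|\lesssim N$ on the low-low support $|\eta|,|\eta+\xi|\lesssim N$ (using $\Pi_\theta(0)\Pi_{-\theta}(0)=0$). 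Decomposing $\psi_j=\sum_M P_M\psi_j$ and representing the localized bilinear operator by its kernel (as in Lemma~\ref{kernel}), I expect for each high-high block $M_1\sim M_2=:M\gtrsim N$ the estimate
\[
\|P_N\langle\Pi_\theta P_M\psi_1,\Pi_{-\theta}P_M\psi_2\rangle\|_{L_x^\infty}\lesssim \frac{N}{\langle M\rangle}\,\|P_M\psi_1\|_{L^\infty}\|P_M\psi_2\|_{L^\infty},
\]
and for the low-low block $M_1,M_2\lesssim N$ the same estimate with $N$ in place of $N/\langle M\rangle$.

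To close, I would combine this with the two-sided bound $\|P_M\psi_j\|_{L^\infty}\lesssim\min\bigl(M^{3/2},\langle M\rangle^{-2}\langle t\rangle^{-3/2}\bigr)\ve_1$ coming from Bernstein plus $L^2$-conservation and from \eqref{W2infty}, and sum over dyadic $M$, splitting on $M\lessgtr\langle t\rangle^{-1}$. The regimes $M\gtrsim 1$ and $M\lesssim\langle t\rangle^{-1}$ each yield at most $N\langle t\rangle^{-3}\ve_1^2$ after geometric summation, while the intermediate window $\langle t\rangle^{-1}\lesssim M\lesssim 1$ sums $N\langle t\rangle^{-3}$ over $\sim\log\langle t\rangle$ dyadic scales, producing a factor $\log\langle t\rangle\lesssim\langle t\rangle^{\delta_0/10}$ which accounts for the exponent in \eqref{null-bilinear}. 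The main technical obstacle is the rigorous justification of the Coifman--Meyer-type kernel bound $N/\langle M\rangle$ from the symbol estimates of Lemma~\ref{smallness}, particularly matching the kernel $L^1$-norm uniformly in the high-high range and handling the low-low range where Lemma~\ref{smallness} does not directly apply; once these kernel estimates are in place, the remaining summation is routine.
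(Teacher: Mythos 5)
Your $N\gtrsim 1$ reduction via Lemma~\ref{Lem:FL} is clean and correct, and for $N\ll 1$ the dyadic decomposition in the inputs together with the bound $\|P_M\psi_j\|_{L^\infty}\lesssim\min(M^{3/2},\langle M\rangle^{-2}\langle t\rangle^{-3/2})\ve_1$ and the Coifman--Meyer kernel estimates you gesture at does give the lemma. This is, however, a genuinely different route from the paper. The paper's proof (at this point in Section~3) never splits at $N\sim 1$ and never invokes the $\min(M^{3/2},\cdots)$ interpolation with the attendant $\log\langle t\rangle$ over the intermediate window; instead it inserts the smoothing factor $|D|^{-\ve}$ with $0<\ve\ll\de_0$ into each $\psi_j$, estimates $\||D|^{-\ve}\langle D\rangle^2\psi_j\|_{L^\infty}$ by interpolating between $\|\psi_j\|_{L^2}$ and $\|\psi_j\|_{L^\infty}$ to produce a time decay $\langle t\rangle^{-3/2+\ve+\frac13\ve\de_0}$ per factor, and then shows the kernel bound $A_N\lesssim N\langle N\rangle^{-2}$ for a symbol that now carries compensating weights $|\sigma|^\ve|\xi+\sigma|^\ve$. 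There the $|\cdot|^\ve$ weights make the dyadic pieces $A_{(N,N_1,N_2)}$ absolutely summable with no logarithm, and the $\langle t\rangle^{\de_0/10}$ in \eqref{null-bilinear} comes from the $\ve$-loss in the interpolation rather than from counting dyadic scales. So the two proofs localize the same logarithmic loss in different places: yours in the frequency sum, the paper's in the pointwise decay rate of the smoothed inputs. Both work; the paper's is perhaps cleaner because the kernel $A_N$ is estimated once and for all rather than block by block.

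Two small points to tighten. First, your ``low--low'' block $M_1,M_2\lesssim N$ is vacuous once you localize the output to $|\xi|\sim N$: if $|\eta|\sim M_1$ and $|\eta+\xi|\sim M_2$ with both $\ll N$ then $|\xi|\ll N$, a contradiction. The correct dichotomy is $M_1\sim M_2\gg N$ (the high--high case, handled with Lemma~\ref{smallness}) versus $\max(M_1,M_2)\sim N$, $\min(M_1,M_2)\lesssim N$; in the latter the symbol size is still $\lesssim N$ via $\Pi_\theta(0)\Pi_{-\theta}(0)=0$ and the Taylor expansion, and after changing variables so that $\sigma=\eta+\xi$ is the small variable the $\eta$-derivatives of the localizations scale no worse than $\min(M_1,M_2)^{-1}$. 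Second, the kernel bound $N/\langle M\rangle$ in the high--high range does follow from the differential inequalities $|\partial_\xi^m\partial_\eta^n[\Pi_\theta(\eta)\Pi_{-\theta}(\eta+\xi)]|\lesssim N^{1-m}\langle M\rangle^{-1-n}$ of Lemma~\ref{smallness} together with the cutoff derivatives, since the ratio to the required scaling is $(N/M)^{m_2}(M/\langle M\rangle)^{n_3}\lesssim 1$; this is exactly the integration-by-parts computation the paper performs in \eqref{standard calculation}, so the "main technical obstacle" you flag is real but entirely routine given Lemma~\ref{smallness}.
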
 
\begin{rem}
As a direct consequence of \eqref{norm-infty} and the boundedness of Dirac projections \eqref{bound of projection}, one has 
\begin{align*}
	\|P_{N}\bra{\Pi_{\theta}(D)\psi_{1}(t),\Pi_{-\theta}(D)\psi_{2}(t)}\|_{L_{x}^{\infty}}\les \bra{N}^{-2}\bra{t}^{-3+ \frac{\de_0}{10}}\ve_{1}^{2}.
\end{align*}
Thus, by exploiting the spinorial null structure in \eqref{null-bilinear}, we obtain an extra $N$ factor which is useful when $N\le 1$.
\end{rem}

\newcommand{\mult}{{ K_{(N,N_1,N_2)}(\xi,\sigma) }}
\begin{proof} 
As in \eqref{proof-a3}, one can obtain  
\begin{align*}
&\|P_{N}\bra{\Pi_{\theta}(D)\psi_{1}(t),\Pi_{-\theta}(D)\psi_{2}(t)}\|_{L_{x}^{\infty}}  \\
&\quad \les A_N \||D|^{-\ve}\bra{D}^{2}\psi_1(t)\|_{L_x^{\infty}}\||D|^{-\ve}\bra{D}^{2}\psi_2(t)\|_{L_x^{\infty}},
\end{align*}
where 
\begin{align*}
&A_N\\
&=\int\!\!\!\! \int_{\R^{3+3}} \left| \int\!\!\!\!\int_{\R^{3+3}} e^{iy\cdot \xi} e^{iz\cdot \sigma} \bra{\xi+\sigma}^{-2} \bra{\sigma}^{-2}|\xi +\sigma|^{\ve}|\sigma|^{\ve} \Pi_{\theta}(\sigma)\Pi_{-\theta}(\xi + \sigma) d\xi d\sigma\right| dz dy,
\end{align*}
for $\ve$ satisfying $0<\left( 2+ \frac23\de_0\right)\ve\ll \frac{\de_0}{10}$.
Since
\begin{align*}
\||D|^{-\ve} \phi \|_{L_x^\infty} \les \|\phi\|_{L_x^2}^{\frac23\ve}\|\phi\|_{L_x^\infty}^{1-\frac23\ve} \text{ for } \phi \in L_x^2 \cap L_x^\infty \text{ and } 0 < \ve \ll 1,	
\end{align*}
we have by \eqref{DepD2} and the a priori assumption \eqref{assumption-apriori} that 
\begin{align}\label{DepD2}
	\||D|^{-\ve}\bra{D}^{2}\psi_j(t)\|_{L_x^{\infty}}
	\les \bra{t}^{-\frac{3}{2} + \ve +\frac13 \ve \de_0}\ep_1, \text{ for } j=1,2. 
\end{align}
Thus, we suffice to prove that 
\begin{align}\label{eq:bound-an}
	A_N \les N\langle N\rangle^{-2}.
\end{align}
% Using \eqref{eq:hls-epsil} and Proposition \ref{timedecay-prop}, we estimate
% \begin{align*}
% 	&\sup_{x}\left| \int e^{ix\cdot \xi} \rho_{N}(\xi)  \mathcal F\bra{\psi_{\theta}(t),\psi_{-\theta}(t)}(\xi) d\xi\right|\\
% 	&=\sup_{x}\left| \iint e^{ix\cdot \xi} \rho_{N}(\xi)  \bigbra{ \wh{\psi_{\theta}}(t,\sigma),\wh{\psi_{-\theta}}(t,\xi + \sigma)} d \sigma d\xi\right|\\
% 	&\les A_N \||D|^{-\ve}\bra{D}^{2}\psi_{\theta}(t)\|_{L_x^{\infty}}\||D|^{-\ve}\bra{D}^{2}\psi_{-\theta}(t)\|_{L_x^{\infty}} \\
% 	&\les  A_N \bra{t}^{-3 + 2\ve +\frac23 \ve \de_0}\ve_1^2,
% \end{align*}
We apply the dyadic decomposition to obtain 
\[
A_N\le \sum_{N_1,N_2 \in 2^\Z}A_{(N,N_1,N_2)},
\]
where 
\begin{align*}
A_{(N,N_1,N_2)}=\iint \left| \iint e^{iy\cdot \xi} e^{iz\cdot \sigma} \mult  d\xi d\sigma\right| dz dy,
\end{align*}
with
\begin{align}
\begin{aligned}\label{eq:k-multiplier}
&\mult\\
&\;\; := \rho_{N}(\xi)\rho_{N_1}(\xi + \sigma)\rho_{N_2}(\sigma)\bra{\xi+\sigma}^{-2} \bra{\sigma}^{-2}|\xi +\sigma|^{\ve}|\sigma|^{\ve} \Pi_{\theta}(\sigma)\Pi_{-\theta}(\xi + \sigma).
\end{aligned}
\end{align}
A direct computation using Lemma \ref{smallness} gives that for $N\les N_1$, 
\begin{align*}
\left| \nabla_\sigma^m \nabla_\xi^n \mult \right| \les \langle N_1\rangle^{-2}\langle N_2\rangle^{-2} N N_1^\ep N_2^\ep N^{-n}N_2^{-m}.
\end{align*}
Integrating by parts and using the differential inequalities, we estimate 
\begin{align}\begin{aligned}\label{standard calculation}
	&\sum_{N\les N_1 }A_{(N,N_1,N_2)}  \\ 
	&\les  \sum_{N\les N_1 } \bigg( \normo{|z|^{-2}}_{L^1(|z|\le N_2^{-1})}\int    \normo{\int e^{iy \cdot \xi} \Delta_\sigma \mult d\xi }_{L_\sigma^1} dy\\
	&\hspace{1.5cm} +\normo{|z|^{-4}}_{L^1(|z|\ge N_2^{-1})}\int    \normo{\int e^{iy \cdot \xi} \Delta_\sigma^2 \mult d\xi }_{L_\sigma^1} dy \bigg) \\
	& \les\sum_{N\les N_1 } \bigg(  \normo{|y|^{-2}}_{L^1(|y|\le N^{-1})} \normo{\Delta_\xi (N_2^{-1}\Delta_\sigma +N_2 \Delta_\sigma^2) \mult }_{ L_\sigma^1L_\xi^1} \bigg) \\
	&\hspace{1.5cm}+ \normo{|y|^{-4}}_{L^1(|y|\ge N^{-1})}\normo{\Delta_\xi^2(N_2^{-1}\Delta_\sigma + N_2\Delta_\sigma^2) \mult }_{ L_\sigma^1L_\xi^1}\\
	&\les \sum_{N\les N_1 } N \bra{N_1}^{-2} \bra{N_2}^{-2} N_1^{\ve}N_2^{\ve}  \les N \bra{N}^{-2}.
\end{aligned}\end{align}
For the remaining contribution where $N_1\ll N$, we can  obtain the desired bound via the change of variables by the symmetry of $N_1$ and $N_2$. Indeed, by defining
\[
\wt{K}_{(N,N_1,N_2)}(\xi,\sigma) := K_{(N,N_1,N_2)}(\xi, -\xi + \sigma),
\]
we have
\[\sum_{N_1\ll N}A_{(N,N_1,N_2)}
=\sum_{N_1\ll N}\iint \left| \iint e^{iy'\cdot \xi} e^{iz\cdot \sigma} \wt{K}_{(N,N_1,N_2)}(\xi,\sigma)  d\xi d\sigma\right| dz dy'.
\]
Now, one can verify that $\wt{K}_{(N,N_1,N_2)}$ satisfies the differential inequalities
\begin{align*}
	\left| \nabla_\sigma^m \nabla_\xi^n \wt{K}_{(N,N_1,N_2)}(\xi,\sigma) \right| \les \langle N_1\rangle^{-2}\langle N_2\rangle^{-2} N N_1^\ep N_2^\ep N^{-n}N_1^{-m}.
\end{align*}
Similar computation as in \eqref{standard calculation}
yields that $\sum_{N_1\ll N}A_{(N,N_1,N_2)} A_N \les N \bra{N}^{-2}$. 
\end{proof}

Following lemma concerns another null-structure.
\begin{lemma}\label{lem:linearpart} Let $N \in 2^{\Z}$. Assume that $\psi_1,\psi_2$ satisfy a priori assumption \eqref{assumption-apriori}. Then,
	\begin{align}\label{eq:linearpart}
	\begin{aligned}
	&\left\|P_{N}\left(\bigbra{\bra{D}\psi_1(t),\psi_2(t)} - \bigbra{\psi_1(t),\bra{D}\psi_2(t)} \right) \right\|_{L_{x}^{\infty}(\R^3)}\\
	&\hspace{5cm}\les N\min \left(  \bra{N}^{-2}\bra{t}^{-3+ \frac{\de_0}{10}} , N^3 \right) \ve_{1}^{2}.
    \end{aligned}	
    \end{align}
\end{lemma}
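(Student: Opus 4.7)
My plan is to extract a null factor of $N$ directly from the bilinear symbol and then run the proof of Lemma \ref{lem-null-bilinear} almost line by line, with this symbol playing the role of $\Pi_\theta(\sigma)\Pi_{-\theta}(\xi+\sigma)$. Computing the Fourier transform of the expression in the statement, one has
\begin{align*}
\mathcal{F}\bigl[\langle\langle D\rangle\psi_1,\psi_2\rangle-\langle\psi_1,\langle D\rangle\psi_2\rangle\bigr](\xi) \;\propto\; \int_{\R^3}m(\xi,\sigma)\,\overline{\widehat{\psi_1}(\sigma)}\,\widehat{\psi_2}(\xi+\sigma)\,d\sigma,
\end{align*}
with the scalar multiplier $m(\xi,\sigma):=\langle\sigma\rangle-\langle\xi+\sigma\rangle$. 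The fundamental theorem of calculus applied to $\sigma\mapsto\langle\sigma\rangle$ yields
\begin{align*}
m(\xi,\sigma) = -\xi\cdot\int_0^1\frac{\sigma+s\xi}{\langle\sigma+s\xi\rangle}\,ds,
\end{align*}
so that $|m(\xi,\sigma)|\les|\xi|$ with analogous bounds on its derivatives (derivatives in $\sigma$ gaining additional $\langle\sigma\rangle^{-1}$ factors, in the same spirit as Lemma \ref{smallness}). On $\mathrm{supp}\,\rho_N(\xi)$ this provides precisely the factor $N$ which distinguishes the claim here from the naive estimate \eqref{norm-infty}.

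With the null factor in place, I would next mirror the Coifman--Meyer/dyadic integration-by-parts scheme of Lemma \ref{lem-null-bilinear}: decompose $\psi_j=\sum_{N_j}P_{N_j}\psi_j$ and replace the kernel $K_{(N,N_1,N_2)}$ of \eqref{eq:k-multiplier} by
\begin{align*}
K^{\sharp}_{(N,N_1,N_2)}(\xi,\sigma) := \rho_N(\xi)\rho_{N_1}(\xi+\sigma)\rho_{N_2}(\sigma)\langle\xi+\sigma\rangle^{-2}\langle\sigma\rangle^{-2}|\xi+\sigma|^{\ve}|\sigma|^{\ve}\,m(\xi,\sigma).
\end{align*}
The same two-case analysis ($N\les N_1$ versus $N_1\ll N$, the latter handled via the symmetry change of variables $\sigma\mapsto -\xi+\sigma$) together with the differential inequalities on $m$ produces a kernel $L^1$-bound of size $N\langle N\rangle^{-2}\langle N_1\rangle^{-2}\langle N_2\rangle^{-2}N_1^{\ve}N_2^{\ve}$. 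Summing in $N_1,N_2$ and invoking \eqref{DepD2} with the a priori assumption \eqref{assumption-apriori} yields the first bound $N\langle N\rangle^{-2}\langle t\rangle^{-3+\de_0/10}\ve_1^2$.

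For the alternative $N\cdot N^3\ve_1^2$ factor in the $\min$, I would use only the pointwise bound $|m(\xi,\sigma)|\les N$ on $|\xi|\sim N$ together with Cauchy--Schwarz in $\sigma$, Plancherel, and the $L^2$ bound $\|\psi_j\|_{L^2}\les\ve_1$, giving
\begin{align*}
\|P_N(\,\cdot\,)\|_{L_x^\infty} \les \int_{|\xi|\sim N}\Bigl|\int m(\xi,\sigma)\overline{\widehat{\psi_1}(\sigma)}\widehat{\psi_2}(\xi+\sigma)\,d\sigma\Bigr|\,d\xi \les N\cdot N^3\ve_1^2.
\end{align*}
The hard part, as in Lemma \ref{lem-null-bilinear}, will be carefully tracking the single null factor $|\xi|\sim N$ through the iterated integrations by parts --- higher $\sigma$-derivatives of $m$ dilute the gain --- and this is handled by first differentiating the cutoffs $\rho_N(\xi),\rho_{N_2}(\sigma)$ and the $\langle\cdot\rangle^{-2}$ weights before landing any derivative on $m$, in exact parallel to how the factor $|\eta|$ from $\Pi_\pm(\xi)\Pi_\mp(\xi-\eta)$ is propagated in the proof of Lemma \ref{lem-null-bilinear}.
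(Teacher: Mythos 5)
Your proposal matches the paper's proof essentially line for line. For the first bound, the paper writes $B_N$ with the multiplier $\big(\bra{\sigma}-\bra{\xi+\sigma}\big)$ in place of the spinorial factor $\Pi_\theta(\sigma)\Pi_{-\theta}(\xi+\sigma)$, notes that this cancellation plays exactly the role of the spinorial null structure in \eqref{eq:k-multiplier}, and imports the Coifman--Meyer/integration-by-parts argument from Lemma~\ref{lem-null-bilinear} verbatim to get $B_N\les N\bra{N}^{-2}$, combining with \eqref{DepD2} for the time decay; for the second bound, the paper uses the mean value theorem (your FTC identity) to get $|\bra{\sigma}-\bra{\xi+\sigma}|\les|\xi|$, then $L_x^\infty\to L_\xi^1$, Cauchy--Schwarz/Plancherel in $\sigma$, and $\|\rho_N(\xi)|\xi|\|_{L^1_\xi}\sim N^4$. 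Both steps are exactly what you describe, so this is the same argument with the same supporting lemmas.
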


\begin{rem}
A slight modification of the proof of \eqref{norm-infty} gives that 
\begin{align}\label{eq:linearpart2}
	\left\|P_{N} \bra{\bra{D}\psi(t),\psi(t)}  \right\|_{L_{x}^{\infty}(\R^3)} 
	\les \min \left(  \bra{N}^{-1}\bra{t}^{-3+ \frac{\de_0}{10}} , N^3 \right).
\end{align}
Thus, by exploiting the structure in the left side of \eqref{eq:linearpart}, we indeed obtained an extra $N$ gain  when $N\le1$, compared to the one obtained just by applying the triangle inequality and \eqref{eq:linearpart2}.
\end{rem}

\begin{proof} 
The proof of the first bounds proceeds similarly to the proof of Lemma~\ref{lem-null-bilinear}.
We estimate by using \eqref{DepD2}
\begin{align*}
	&\left\|P_{N}\left(\bigbra{\bra{D}\psi_1(t),\psi_2(t)} - \bigbra{\psi_1(t),\bra{D}\psi_2(t)} \right) \right\|_{L_{x}^{\infty}(\R^3)}\\
	% &\sup_{x}\left| \int e^{ix\cdot \xi} \rho_{N}(\xi)  \mathcal F\left(\bigbra{\bra{D}\psi_{\theta}(t),\psi_{\theta}(t)} -  \bigbra{\psi_{\theta}(t),\bra{D}\psi_{\theta}(t)} \right)(\xi) d\xi\right|\\
	% &=\sup_{x}\left| \iint e^{ix\cdot \xi} \rho_{N}(\xi) \Big(  \bra{\sigma} -  \bra{\xi + \sigma} \Big)  \bigbra{ \wh{\psi_{\theta}}(t,\sigma),\wh{\psi_{\theta}}(t,\xi + \sigma)} d\sigma d\xi\right|\\
	&\les B_N \left\||D|^{-\ve}\bra{D}^{2}\psi_{1}(t)\right\|_{L_x^{\infty}} \left\||D|^{-\ve}\bra{D}^{2}\psi_{2}(t)\right\|_{L_x^{\infty}}\\
	&\les  B_N \bra{t}^{-3 + 2\ve +\frac23 \ve \de_0}\ve_1^2,
\end{align*}
where
\begin{align*}
	B_N:=\iint \left| \iint e^{iy\cdot \xi} e^{iz\cdot \sigma} 
	\bra{\xi+\sigma}^{-2} \bra{\sigma}^{-2}|\xi +\sigma|^{\ve}|\sigma|^{\ve} \big( \bra{\sigma} - \bra{\xi + \sigma} \big) 
	d\xi d\sigma\right| dz dy
\end{align*}
for $\ve$ satisfying $0<\left( 2+ \frac23\de_0\right)\ve\ll \frac{\de_0}{10}$.
In the integrand of $B_N$, the cancellation term $\Big(\bra{\sigma} -  \bra{\xi + \sigma}\Big)$ plays a alternative role to the spinorial null-structure $\Pi_{\theta}(\sigma)\Pi_{-\theta}(\xi + \sigma)$ in \eqref{eq:k-multiplier}. Thus, in a similar way to the proof of \eqref{eq:bound-an}, one can obtain
\[
B_N \les N \bra{N}^{-2}.
\]

	Next, we show the bounds in \eqref{eq:linearpart}. By the mean value theorem and H\"older inequality, we estimate 
	\begin{align*}
	&\left\|P_{N}\left(\bigbra{\bra{D}\psi_{1}(t),\psi_{2}(t)} - \bigbra{\psi_{1}(t),\bra{D}\psi_{2}(t)} \right) \right\|_{L_{x}^{\infty}(\R^3)} \\
	&\les \left\| \int \rho_{N}(\xi)  \Big( \bra{\sigma} -  \bra{\xi+\sigma} \Big) \bigbra{\wh{\psi_{1}}(t,\sigma),\wh{\psi_{2}}(t,\xi+\sigma)}   d\sigma  \right\|_{L_\xi^1}\\
	&\les \Big\|\rho_N(\xi) |\xi|\Big\|_{L_\xi^1}  \|\psi_1\|_{L^2}\|\psi_2\|_{L^2}
	\les N^4 \ve_1^2.
\end{align*}
\end{proof}

%%%%%%%%%%%%%%%%%%%%%%%%%%%%%%%%%%%%%%%%%%%%%%%%%%%%%%%%%%%%%%%%%%%%%%%%%%%%%%%%%%%%%%%%%%%%%%%%%%%%%%%%%%%%%%%%%%%%%%%%%%%%%%%%%%%%%%%%%%%%%
%%%%%%%%%%%%%%%%%%%%%%%%%%%%%%%%%%%%%%%%%%%%%%%%%%%%%%%%%%%%%%%%%%%%%%%%%%%%%%%%%%%%%%%%%%%%%%%%%%%%%%%%%%%%%%%%%%%%%%%%%%%%%%%%%%%%%%%%%%%%%
\section{Proof of Theorem \ref{mainthm}}

For any $\psi_{0}$ satisfying \eqref{condition-initial}, it is quite standard to show the existence of local solution $\psi(t)$ in $\Sigma_{T}$ for some $T$ (for instance see \cite{choz2006-siam, hele2014, chhwya}).
	Then the solution to \eqref{maineq-decou} can be extended globally by a continuity
	argument. To achieve this we have only to  prove the following: %%%%%%%%%%%%%%%%%%%%%%%%%%%%%%%%%%%%%%%%%%%%%%%%%%%%%%%%%%%%%%%%%%%%%%%%%%%%%%%%%%%%%%%%%%%%%%%%%%%%%%%%%%%%%%%%%%%%%%%%%%%%%%%%%%%%%%%%%%%%
	Given any $T > 0$, let $\psi$ be a solution with initial data satisfying \eqref{condition-initial} on $[0,T]$. For some small $\varepsilon_1 > 0$ we assume that
	\[
	\|\psi\|_{\Sigma_{T}}\le K\ve_{1}.
	\]
	Then there exists $C$ depending only on $K$ such that
	\begin{align}
		\|\psi\|_{\Sigma_{T}} \le \ve_{0} + C \ve_{1}^{3}.\label{claim}
	\end{align}
	Combining \eqref{claim} and Proposition \ref{timedecay-prop},  we obtain the bound \eqref{global-bound}.
The proof of \eqref{claim} will be done if we prove  Propositions \ref{prop-energy} and \ref{prop-scatt} below. Recall that for simplicity we denote $\Pi_{\theta}(D)\psi$ by $\psi_{\theta}$ for $\theta\in\{+,-\}$, so 
\begin{align*}
	\psi=\psi_+ + \psi_-.
\end{align*}

%Theorem \ref{mainthm} is reduced to Propositions \ref{prop-energy} and \ref{prop-scatt}. For any $\psi_{0}$ in the weighted space \eqref{condition-initial}, we have a local solution $\psi(t)$ in $\Sigma_{T}$ for some $T$. Then the global solution to \eqref{maineq-decou} can be shown by a continuity argument. prove the propagation of smallness in $\Sigma_{T}$ for arbitrary large time $T>0$: Let $\psi_{\thez}$ be a solution with initial data \eqref{condition-initial} on $[0,T]$. Assuming
%\[
%\|\psi_{\thez}\|_{\Sigma_{T}}\le K\ve_{1},
%\]
%there exists $C$ such that, for any $T>0$,
%\begin{align}
%\|\psi_{\thez}\|_{\Sigma_{T}}\le\ve_{0}+C\ve_{1}^{3}\le\frac{K\ve_{1}}{2},\label{claim}
%\end{align}
%for sufficiently small $0<\ve_{0}=\frac{\ve_{1}}{2}$. The proof of \eqref{claim} follows from Propositions \ref{prop-energy} and \ref{prop-scatt}. By Proposition \ref{timedecay-prop}, we obtain \eqref{global-bound}. Proposition \ref{prop-energy} is an energy estimate in weighted norms appearing in $\Sigma_{T}$. % We state  weighted energy estimates(Proposition
%\ref{prop-energy}) and $L_{\xi}^{\infty}$-estimates(Proposition
%\ref{prop-scatt}) which will be proved in Section \ref{sec-energy}, \ref{sec-infty}, respectively.
\begin{prop}[Weighted energy estimate]\label{prop-energy} 
Assume that $\psi\in C([0,T],H^{k})$ satisfies
the condition \eqref{assumption-apriori}.
 Then we obtain the following estimates: For $\theta_0\in\{+,-\}$,
\begin{align}
 & \sup_{t\in[0,T]}\langle t\rangle^{-\de_{0}}\|\psi_{\thez}(t)\|_{H^{s}}\le\ve_{0}+C\ve_{1}^{3},\label{estimate-n}\\
 & \sup_{t\in[0,T]}\langle t\rangle^{-\de_{0}}\|\bra{x} e^{\thez it\brad}\psi_{\thez}(t)\|_{H^{2}}\le\ve_{0}+C\ve_{1}^{3},\label{estimate-1}\\
 & \sup_{t\in[0,T]}\langle t\rangle^{-2\de_{0}}\|\langle x\rangle^{2}e^{\thez it\brad}\psi_{\thez}(t)\|_{H^{2}}\le\ve_{0}+C\ve_{1}^{3}\label{estimate-2}
\end{align}
for some $\de_0$ with $0<\de_{0}<\frac{1}{100}$. \end{prop}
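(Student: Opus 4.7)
The plan is to apply Duhamel's formula for each of the three estimates. The $H^k$ bound \eqref{estimate-n} is immediate: applying $\langle D\rangle^k$ to \eqref{inteq0}, taking $L^2$ norms, and using the first line of \eqref{NHk} in Corollary \ref{cor:Har} gives
$$\|\psi_{\theta_0}(t)\|_{H^k} \le \|\psi_{0,\theta_0}\|_{H^k} + C\int_0^t \langle s\rangle^{-1+\delta_0}\varepsilon_1^3\, ds \le \varepsilon_0 + C\langle t\rangle^{\delta_0}\varepsilon_1^3,$$
which is \eqref{estimate-n}. The bulk of the work is the weighted bounds \eqref{estimate-1}--\eqref{estimate-2}.

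For those I would pass to the profile $f_{\theta_0}(t):=e^{i\theta_0 t\brad}\psi_{\theta_0}(t)$, so that $\|\langle x\rangle^m e^{i\theta_0 t\brad}\psi_{\theta_0}\|_{H^2}=\|\langle x\rangle^m f_{\theta_0}\|_{H^2}$, and by Plancherel the estimates reduce to bounds on $\nabla_\xi^m \widehat{f_{\theta_0}}$ in weighted $L^2_\xi$. Substituting $\widehat{\psi_{\theta_j}}(s,\xi)=e^{-i\theta_j s\braxi}\widehat{f_{\theta_j}}(s,\xi)$ in the nonlinearity turns the Duhamel integral into an oscillatory sum indexed by $\Theta=(\theta_0,\theta_1,\theta_2,\theta_3)$,
$$\widehat{f_{\theta_0}}(t,\xi)=\widehat{\psi_{0,\theta_0}}(\xi)+ic_1\theta_0\sum_{\Theta}\int_0^t\!\!\iint e^{is p_\Theta}\,\Pi_{\theta_0}(\xi)\,\mathcal{M}_\Theta(\xi,\eta,\sigma)\,\widehat{f_{\theta_1}}(\xi-\eta)\langle \widehat{f_{\theta_3}}(\sigma),\widehat{f_{\theta_2}}(\eta+\sigma)\rangle\frac{d\eta\,d\sigma\,ds}{|\eta|^2},$$
where $p_\Theta=\theta_0\braxi-\theta_1\langle\xi-\eta\rangle-\theta_2\langle\eta+\sigma\rangle+\theta_3\langle\sigma\rangle$ and $\mathcal{M}_\Theta$ encodes the inner Dirac projections. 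Applying $\nabla_\xi$ produces three kinds of terms: derivatives on $\Pi_{\theta_0}$ or $\mathcal{M}_\Theta$ (harmless by \eqref{bound of projection}), derivatives on $\widehat{f_{\theta_1}}(\xi-\eta)$ (controlled by the bootstrap $\Sigma_T$ norm), and derivatives on the phase $e^{is p_\Theta}$, which is the principal source of difficulty.

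When $\nabla_\xi$ hits the phase, a factor $is\nabla_\xi p_\Theta=is(\theta_0\xi/\braxi-\theta_1(\xi-\eta)/\langle\xi-\eta\rangle)$ appears; this linear $s$-growth must be compensated by an extra $|\eta|$-power in order to beat the Coulomb kernel $|\eta|^{-2}$. A case split on $\Theta$ supplies it. If $\theta_0=\theta_1$, the mean-value argument of Lemma \ref{smallness} gives $|\nabla_\xi p_\Theta|\lesssim|\eta|/\langle\xi-\eta\rangle$, a space non-resonance yielding the missing $|\eta|$ directly. If $\theta_0\ne\theta_1$, rearranging the spinor structure so that the outer projections appear as $\Pi_{\theta_0}(\xi)\Pi_{-\theta_0}(\xi-\eta)$ and invoking Lemma \ref{smallness} (together with Lemma \ref{lem-null-bilinear}) again supplies an $|\eta|$-factor; when additionally $\theta_2\ne\theta_3$, Lemma \ref{lem:linearpart} offers an alternate cancellation on the inner bilinear. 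The only remaining case is $\theta_0\ne\theta_1,\theta_2=\theta_3$, in which no spatial cancellation is available, so one integrates by parts in $s$ via $\partial_s e^{isp_\Theta}=ip_\Theta e^{isp_\Theta}$ (time non-resonance since $p_\Theta$ stays bounded away from $0$ in this case). Boundary terms at $s=0,t$ are controlled directly through the multilinear bounds \eqref{norm-two}--\eqref{norm-infty}, while the transferred $\partial_s$ on $\widehat{f_{\theta_j}}$ is replaced by a cubic-in-profile expression using the equation, giving one extra power of time decay and integrability.

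The second weighted estimate \eqref{estimate-2} follows by applying $\nabla_\xi^2$ and repeating the above case analysis; the worst new term carries $(is\nabla_\xi p_\Theta)^2$ and must be absorbed by an $|\eta|^2$-gain, obtained either by combining two single cancellations or by iterating the argument, with the relaxed budget $\langle t\rangle^{2\delta_0}$ absorbing the extra loss. A dyadic frequency decomposition together with the localized estimates \eqref{esti-g-2}--\eqref{esti-g-1} is used throughout to assemble the bounds rigorously. I expect the main computational obstacle to be the integration by parts in $s$ for the case $\theta_0\ne\theta_1,\theta_2=\theta_3$, where one must carefully carry out the cubic substitution for $\partial_s \widehat{f_{\theta_j}}$ and verify that the resulting quintic expression closes under the $\Sigma_T$ bootstrap without losing derivatives on the profile.
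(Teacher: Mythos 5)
Your proposal follows essentially the same route as the paper: pass to profiles, differentiate the Duhamel integral, identify the derivative on the phase as the dangerous term, supply the $|\eta|$-gain from the space non-resonance $\nabla_\xi p_{(\theta_0,\theta_1)}$ when $\theta_0=\theta_1$ or from the spinorial null structure $\Pi_{\theta_0}(\xi)\Pi_{-\theta_0}(\xi-\eta)$ when $\theta_0\neq\theta_1$, and reserve the time non-resonance (integration by parts in $s$) for the residual case $\theta_0\neq\theta_1,\ \theta_2=\theta_3$. Two small points of imprecision are worth correcting before executing. First, you invoke Lemma~\ref{lem:linearpart} for the case $\theta_2\neq\theta_3$; the lemma that applies there is the bilinear spinorial estimate, Lemma~\ref{lem-null-bilinear}, whereas Lemma~\ref{lem:linearpart} is precisely the tool that handles the term produced \emph{after} the time integration by parts. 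Second, the paper integrates by parts in $s$ using only the partial phase $p_{(\theta_0,\theta_1)}(\xi,\eta)=\theta_0(\langle\xi\rangle+\langle\xi-\eta\rangle)$, which is $\ge 2$ uniformly in $\eta,\sigma$, rather than the full $p_{\mathbf{\Theta}}$ (which is only bounded away from $0$ when $|\eta|$ is small). As a consequence, $\partial_s$ acts on $\mathcal{F}\langle\psi_{\theta_3},\psi_{\theta_2}\rangle$, producing not only a quintic nonlinear piece but also a genuinely \emph{bilinear} piece $i(\langle\langle D\rangle\psi_+,\psi_+\rangle-\langle\psi_+,\langle D\rangle\psi_+\rangle)$ coming from the free evolution; this is the term that Lemma~\ref{lem:linearpart} controls by furnishing an extra factor of $N_2$. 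Your framing of the post-IBP contribution as ``a cubic-in-profile expression'' therefore elides a step that is essential to closing the estimate, even though the correct lemma is in your toolkit.
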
 
\noindent The proof of this proposition
constitutes the main part of our analysis. It will be given in Section
\ref{sec-energy}.

To complete the proof of \eqref{claim}, we need to control the $L_{\xi}^{\infty}$-norm. In order to estimate $L_{\xi}^{\infty}$-norm,  we introduce a profile modification as follows: For $\thez \in \{+,-\}$,
\begin{align*}
g_{\thez}(t,\xi):=e^{iB(t,\thez\xi)}e^{\thez it\braxi}\wh{\psi_{\thez}}(t,\xi),
\end{align*}
where $B(t,\theta_{0}\xi)$ is given in \eqref{modified-phase}. \begin{prop}[$L_{\xi}^{\infty}$-estimates]\label{prop-scatt}
Let $\psi\in C([0,T];H^{k})$ satisfy a priori assumption \eqref{assumption-apriori}. Then we get
\begin{align}
\sup_{t_{1}\le t_{2}\in[0,T]}\bra{t_{2}}^{\de_{0}}\normo{\bra{\xi}^{10}\Big(g_{\thez}(t_{2})-g_{\thez}(t_{1})\Big)}_{L_{\xi}^{\infty}}\le\ve_{1}\label{bound-linfty}
\end{align}
for sufficiently small  $\de_{0}>0$ and $\theta_0 \in \{ +,-\}$. 
\end{prop}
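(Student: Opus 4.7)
The plan is to differentiate $g_{\thez}$ in time, extract the long-range resonant contribution that must be matched by $\partial_t B$, and show the remaining terms are integrable in $t$. Starting from $g_{\thez}(t,\xi) = e^{iB(t,\thez\xi)}e^{i\thez t\braxi}\wh{\psi_{\thez}}(t,\xi)$ and using \eqref{maineq-decou}, a direct computation yields
\begin{align*}
\partial_t g_{\thez}(t,\xi) = i\partial_t B(t,\thez\xi)\,g_{\thez}(t,\xi) + ic_{1}\,e^{iB(t,\thez\xi)}\Pi_{\thez}(\xi)\sum_{\mathbf{\Theta}}\mathcal{I}_{\mathbf{\Theta}}(t,\xi),
\end{align*}
where $\mathbf{\Theta} = (\thez,\theo,\thet,\theth)$ has its first component fixed and each $\mathcal{I}_{\mathbf{\Theta}}$ is a trilinear Fourier integral with oscillatory phase $e^{it\phase}$, singular kernel $|\eta|^{-2}$ coming from the Hartree potential, and profiles $f_{\thej} = e^{i\thej t\braxi}\wh{\psi_{\thej}}$. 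Integrating in $t$ from $t_1$ to $t_2$ reduces \eqref{bound-linfty} to an $L_\xi^\infty$ bound, weighted by $\bra{\xi}^{10}$, on the time-integrated right-hand side after cancellation of the resonant piece.

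I would then carry out two decompositions. First, a dyadic split in $|\eta|$ between a low region $|\eta| \lesssim t^{-L_0}$, for a small $L_0 > 0$ to be chosen, and its complement; on the complement, integration by parts in $\eta$ against the $|\eta|^{-2}$-kernel, combined with Lemma \ref{kernel} and the spinorial/linear null-structures of Lemmas \ref{lem-null-bilinear} and \ref{lem:linearpart}, yields the desired $O(\bra{t}^{-1-\de_0})$ integrand via the $\Sigma_T^\theta$-norms and \eqref{timedecay}. Second, a split in $|\xi|$ matching the cutoff $\rho(s^{-1/100}\xi)$ of \eqref{modified-phase}: for $|\xi| \gtrsim t^{1/100}$ the $\bra{\xi}^{10}$-weight in the a priori bound gives the extra polynomial decay in $t$ needed. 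Within the small-$|\eta|$, moderate-$|\xi|$ region, the sign patterns with $\thez \neq \theo$ or $\thet \neq \theth$ are time-non-resonant (the phase $\phase$ is bounded below at $\eta = 0$), so integration by parts in time gains $1/\bra{t}$ and the boundary and derivative-on-profile terms are controlled by the a priori $L_\xi^\infty$ bound, Corollary \ref{cor:Har}, and Lemma \ref{Lem:FL}.

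The core of the argument is the fully resonant sign pattern $\thez = \theo$, $\thet = \theth$. Taylor-expanding $\phase = \eta \cdot v(\xi,\sigma) + O(|\eta|^2)$ with $v(\xi,\sigma) = \thez\xi/\braxi - \thet\sigma/\bra{\sigma}$ and freezing the bilinear factor at $\eta = 0$, one applies
\begin{align*}
\int_{\rt}|\eta|^{-2}\rho(t^{L_0}\eta)\,e^{it\eta\cdot v}\,d\eta = \frac{2\pi^{2}}{t|v|} + O(\bra{t}^{-1-\de_0})
\end{align*}
to extract the resonant leading piece
\begin{align*}
\frac{ic_{1}}{(2\pi)^{3}}\rho(t^{-1/100}\xi)\,f_{\thez}(\xi)\cdot\frac{1}{\bra{t}}\sum_{\thet\in\{+,-\}}\int_{\rt}|v|^{-1}|\wh{\psi_{\thet}}(\sigma)|^{2}\,d\sigma,
\end{align*}
which, after multiplication by $e^{iB(t,\thez\xi)}$, equals $i\partial_t B(t,\thez\xi)\,g_{\thez}(t,\xi)$ by \eqref{modified-phase} (using $\Pi_{\thez}(\xi)f_{\thez}(\xi) = f_{\thez}(\xi)$ and the evenness of $\rho$) and cancels. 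The residual errors---Taylor remainders of order $|\eta|^2$, the modulation differences $\bra{f_{\thet}(\sigma),f_{\thet}(\sigma+\eta)} - |f_{\thet}(\sigma)|^2$ and $f_{\thez}(\xi-\eta) - f_{\thez}(\xi)$, and the $O(\bra{t}^{-1-\de_0})$ in the identity above---each gain an extra factor of $|\eta|$ or $\bra{t}^{-\de_0}$, which combine with the $\Sigma_T$-bounds to yield the required $\bra{t}^{-1-\de_0}$ integrand. The main obstacles will be tracking the $\bra{\xi}^{10}$-weight through all trilinear interactions via Littlewood--Paley decomposition using both the $H^k$ and $L_\xi^\infty(\bra{\xi}^{10})$ components of $\Sigma_T$, matching the extracted leading order precisely with \eqref{modified-phase} (including the cutoff $\rho(s^{-1/100}\xi)$ and the coefficient $|v|^{-1}$), and controlling the near-degeneracy of $v(\xi,\sigma)$ on the codimension-one set $\thez\xi/\braxi = \thet\sigma/\bra{\sigma}$, which requires a suitable smoothing of $\wh{\psi_{\thet}}$ in $\sigma$.
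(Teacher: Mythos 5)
The overall skeleton of your proposal is aligned with the paper: differentiate $g_{\thez}$, split $|\eta|$ at a time-dependent threshold $L_0$, extract the resonant leading piece via Taylor expansion of the phase in $\eta$ and the formula for $\int e^{it\eta\cdot v}|\eta|^{-2}\,d\eta$, and match it against $\partial_t B$; then show the remaining contributions are integrable in $t$. This matches the organization of Section~6.

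However, your treatment of the non-$\mathbf{\Xi}$ sign patterns contains a false assertion that would break the argument. You claim that whenever $\thez\neq\theo$ or $\thet\neq\theth$ the phase $\phase$ is bounded away from zero at $\eta=0$, and you propose integrating by parts in time. At $\eta=0$ the phase equals
\begin{align*}
\phase\big|_{\eta=0}=(\thez-\theo)\braxi+(\theth-\thet)\bra{\xi+\sigma},
\end{align*}
and for the combination $\mathbf{\Theta}=(\thez,-\thez,\thez,-\thez)$ this is $2\thez\big(\braxi-\bra{\xi+\sigma}\big)$, which vanishes on the codimension-one set $|\xi|=|\xi+\sigma|$ (and more generally for $\eta\neq 0$ the zero set of $\phase$ meets the integration region). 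Time integration by parts therefore gains nothing near that set and cannot close the estimate. The paper avoids this entirely: for $\mathbf{\Theta}\neq\mathbf{\Xi}$ with $|\eta|\lesssim L_0$ it never touches the time variable, and instead uses the spinorial null structure $\Pi_{\thez}(\xi)\Pi_{-\thez}(\xi+\eta)=O(|\eta|)$ from Lemma~\ref{smallness} (equation~\eqref{decay-null}), which supplies a uniform factor $|\eta|$ to tame the singularity $|\eta|^{-2}$ regardless of $\sigma$; this is the content of the proof of~\eqref{eq-modi-part2-2}. You would need to replace the time-nonresonance step by this (or an equivalent) algebraic cancellation to repair the proof.

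A secondary, less fatal point: your one-line description of the high-$|\eta|$ contribution (``integration by parts in $\eta$ against the $|\eta|^{-2}$-kernel, combined with Lemma~\ref{kernel} and Lemmas~\ref{lem-null-bilinear}, \ref{lem:linearpart} and \eqref{timedecay}'') does not reflect what is actually needed. The paper's estimate~\eqref{eq-modi-part3} requires a further dyadic decomposition in $N_1,N_2,N_3,L,L'$, a case analysis ($\mathcal A_1$--$\mathcal A_4$), and integration by parts in either $\eta$ or $\sigma$ depending on whether $\theo=\theth$ or not, with the lower bounds in \eqref{phase lowbound} and \eqref{psigma} supplying the needed nondegeneracy. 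Lemmas~\ref{lem-null-bilinear} and \ref{lem:linearpart} and the dispersive decay~\eqref{timedecay} are tools for the weighted energy estimate (Section~5), not for this purely Fourier-side argument. Your sketch would need to be substantially expanded in this regime to constitute a proof.
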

 This proposition will be
proved in Section \ref{sec-infty}.
The bound \eqref{bound-linfty} implies that 
the global solution $\psi_{\theta_0}=\Pi_{\theta_0}(D)\psi$ converges to a scattering profile $\phi_{\thez}$ defined by
\[
\phi_{\thez}(\xi):= \mathcal F^{-1}\left(\lim_{t\to\infty}g_{\thez}(t,\xi)\right).
\]
Then, \eqref{bound-linfty} leads us to
\begin{align}
\normo{\braxi^{10}\mathcal{F}\left[\psi_{\thez}(t)-e^{-\thez iB(t,\thez D)}e^{-\thez it\bra{D}}\phi_{\thez}\right]}_{L_{\xi}^{\infty}}\les\bra{t}^{-\de_{0}}\ve_{0}.\label{scatt-decou}
\end{align}
Setting $\phi:=\phi_{+}+\phi_{-}$, \eqref{scatt-decou} implies the
modified scattering  \eqref{scattering}. This completes
the proof of Theorem \ref{mainthm}.
%%%%%%%%%%%%%%%%%%%%%%%%%%%%%%%%%%%%%%%%%%%%%%%%%%%%%%%%%%%%%%%%%%%%%%%%%%%%%%%%%%%%%%%%%%%%%%%%%%%%%%%%%%%%%%%%%%%%%%%%%%%%%%%%%%%%%%%%%%%%%
%%%%%%%%%%%%%%%%%%%%%%%%%%%%%%%%%%%%%%%%%%%%%%%%%%%%%%%%%%%%%%%%%%%%%%%%%%%%%%%%%%%%%%%%%%%%%%%%%%%%%%%%%%%%%%%%%%%%%%%%%%%%%%%%%%%%%%%%%%%%%

\section{Proof of Proposition \ref{prop-energy}}
\label{sec-energy} In this section, we prove the estimates \eqref{estimate-n}--\eqref{estimate-2},
with a priori assumption \eqref{assumption-apriori}. 
We write \eqref{inteq0} by decomposing the nonlinear term 
\begin{align*}
\begin{aligned}
	\psi_{\theta_0}(t) 
	& =e^{- \theta_0 it\langle D\rangle}\psi_{0,\theta_0}+i\int_{0}^{t}e^{- i\theta_0 (t-s)\langle D\rangle}\Pi_{\theta_0}(D)\Big[(|x|^{-1}*|\psi|^{2})\psi\Big](s)\,ds, \\ 
	&=e^{- \theta_0 it\langle D\rangle}\psi_{0,\theta_0}+i\sum_{\theta_{j}\in\{\pm \},j=1,2,3} \int_0^t e^{-i\theta_0(t-s)}\Pi_{\theta_0}(D)\mathcal{N}(\psi_{\theta_1},\psi_{\theta_2},\psi_{\theta_3})(s) ds, 
\end{aligned}
\end{align*}
where $\theta_0\in\{+,-\}$ and $\psi_{\theta_j}=\Pi_{\theta_j}(D)\psi$ for $j=1,2,3$.
In order to prove Proposition~\ref{prop-energy}, we prove 
\begin{align*}
\| \psi_{\theta_0} \|_{\sum_T^{\theta_0}}
\le \| \psi_{\theta_0}\|_{\sum_0^{\theta_0}} + C \sum_{\theta_{j}\in\{\pm \},j=1,2,3} \| \psi_{\theta_1} \|_{\sum_T^{\theta_1}}\| \psi_{\theta_2} \|_{\sum_T^{\theta_2}}\| \psi_{\theta_3} \|_{\sum_T^{\theta_3}}.
\end{align*}
The proof is based on the energy method in weighted energy space. 

\subsection{Idea of proof}\label{subidea}
For the solution $\psi_{\theta}$  to \eqref{maineq-decou} with $\theta\in\{+,-\}$,
we use the interaction representation of $\psi_{\theta}(t)$ so as to track the scattering states
\begin{align*}
	f_{\theta}(t,x):=e^{\theta it\langle D\rangle}\psi_{\theta}(t,x). 
\end{align*}
 We also use a method of space-time resonance
developed in the works \cite{gemasha2008,gemasha2012-jmpa,gemasha2012-annals,pusa,iopu}.
In addition, we exploit the spinorial null-structures intrinsic in the our main equation \eqref{maineq-decou}.
Then, we can rewrite \eqref{inteq0} in terms of the Fourier transform of
$f_{\theta_{0}}$.  Then, 
\begin{align}
	\begin{aligned} & \widehat{f_{\theta_{0}}}(t,\xi)=\widehat{\psi_{0,\theta_{0}}}(\xi)+ ic_1 \sum_{ \theta_{j}\in\{\pm \},j=1,2,3
}\mathcal I_{\mathbf{\Theta}}(t,\xi),\\
	& \mathcal{I}_{\mathbf{\Theta}}(t,\xi) =\int_0^t\!\! \int_{\mathbb{R}^{3}}\Pi_{\theta_0}(\xi)e^{is{p}_{(\theta_0,\theta_1)}(\xi,\eta)}|\eta|^{-2} \mathcal{F}\langle\psi_{\theta_3},\psi_{\theta_2}\rangle(s,\eta) \widehat{f_{\theo}}(s,\xi-\eta)d\eta ds,
	\end{aligned}
	\label{inteq-f}
\end{align}
where  
we denote 4-tuple of signs by $\mathbf{\Theta}=(\theta_{0},\theta_{1},\theta_2,\theta_3)$ and the resonance function is given by 
\begin{align}\label{function-resonance}
	{p}_{(\theta_0,\theta_1)}(\xi,\eta)=\thez\braxi-\theo\langle\xi-\eta\rangle.
\end{align}
%We use a method of space-time resonance
%developed in the works \cite{gemasha2008,gemasha2012-jmpa,gemasha2012-annals,pusa,iopu}.
%In addition, we exploit spinorial null-structures intrinsic in the our main equation \eqref{maineq-decou}.  For 4-tuple
%of signs $\mathbf{\Theta}=(\theta_{0},\theta_{1},\theta_{2},\theta_{3})$, we
%recall
%\begin{align*}
%\begin{aligned} 
%	 \widehat{f_{\theta_{0}}}(t,\xi)&=\widehat{\psi_{0,\theta_{0}}}(\xi)+\int_{0}^{t}\mathcal I_{{\mathbf{\Theta}}}(s,\xi)ds,\\
%  \mathcal I_{{\mathbf{\Theta}}}(s,\xi)&=i\sum_{\begin{subarray}{l}
%\theta_{j}\in\{+,-\}\\
%j=1,2,3
%\end{subarray}}\int_{\mathbb{R}^{3}\times\mathbb{R}^{3}}\Pi_{\theta_{0}}(\xi)e^{is\phase(\xi,\eta,\sigma)}|\eta|^{-2}\left\langle \widehat{f_{\theth}}(s,\sigma),\widehat{f_{\thet}}(s,\eta+\sigma)\right\rangle \\
% & \qquad\qquad\qquad\qquad\qquad\qquad\qquad\qquad\qquad\qquad\times\widehat{f_{\theo}}(s,\xi-\eta)d\eta d\sigma
%\end{aligned}
%\end{align*}
%with the phase interaction function
%\begin{align}
%{p}_{{\mathbf{\Theta}}}(\xi,\eta,\sigma)=\thez\braxi-\theo\langle\xi-\eta\rangle+\thet\langle\eta+\sigma\rangle-\theth\langle\sigma\rangle.\label{function-resonance}
%\end{align}
We have to deal with 4 cases of ${\mathbf{\Theta}}$. 
Each ${p}_{(\theta_0,\theta_1)}$ has their own features, so need to be dealt with separately.

Let us take a closer look on the structure
of nonlinear terms. In the course of energy  estimates, we take $\nabla_{\xi}$
and $\nabla_{\xi}^{2}$ to $\mathcal I_{{\mathbf{\Theta}}}$. When the derivatives fall
on the phase $e^{isp_{(\thez,\theo)}}$, we have
\begin{align}
\begin{aligned} & \int_0^t\int_{\mathbb{R}^{3}}\Pi_{\theta_{0}}(\xi)s\textbf{m}(\xi,\eta)e^{is{p}_{(\theta_0,\theta_1)}(\xi,\eta)}|\eta|^{-2}\mathcal{F}\langle\psi_{\theta_3},\psi_{\theta_2}\rangle(s,\eta) \\
 & \qquad\qquad\qquad\qquad\qquad\qquad\qquad\qquad\qquad\qquad\times\widehat{f_{\theo}}(s,\xi-\eta)d\eta ds,
\end{aligned}\nonumber\\
\begin{aligned} & \int_0^t\int_{\mathbb{R}^{3}}\Pi_{\theta_{0}}(\xi)s^{2}\left[\textbf{m}(\xi,\eta)\right]^{2}e^{is{p}_{(\theta_0,\theta_1)}(\xi,\eta)}|\eta|^{-2}\mathcal{F}\langle\psi_{\theta_3},\psi_{\theta_2}\rangle(s,\eta) \\
 & \qquad\qquad\qquad\qquad\qquad\qquad\qquad\qquad\qquad\qquad\times\widehat{f_{\theo}}(s,\xi-\eta)d\eta ds,
\end{aligned}
\label{esti-na-2}
\end{align}
where
\begin{align*}
	\mathbf{m}(\xi,\eta)&=\nabla_{\xi}{p}_{(\theta_0,\theta_1)} (\xi,\eta)=\thez\frac{\xi}{\langle\xi\rangle}-\theo\frac{\xi-\eta}{\langle\xi-\eta\rangle}, \\ 
	\left[\textbf{m}(\xi,\eta)\right]^{2} &= \textbf{m}(\xi,\eta) \otimes \textbf{m}(\xi,\eta).
\end{align*}
Here, we have to bound the singularity $|\eta|^{-2}$ and recover the loss of $s$. To achieve this, we use two null structures suited for given ${\mathbf{\Theta}}$. 
The first one is that a smooth function $\mathbf{m}$ satisfies $\mathbf{m}(\xi,0)=0$ if $\theta_0=\theta_1$, more concretely, 
\begin{equation}
	|\nabla_{\xi}^{n}\nabla_{\eta}^{m}\textbf{m}(\xi,\eta)|\sim\frac{|\eta|^{1-m}}{\jp{\xi}^{n+1}},\qquad\text{ when }|\eta|\ll|\xi|,\label{eq:m bound 1}
	\end{equation}
Thanks to \eqref{eq:m bound 1}, we can think that the multiplier $s\textbf{m}(\xi,\eta)|\eta|^{-2}$
behaves like $|\eta|^{-2}$, as far as the estimates are concerned.
The other is the spinorial null structure \eqref{spinorial} which we can apply when $\theta_0\neq\theta_1$ or $\theta_2\neq\theta_3$. 
Here, \eqref{decay-null} corresponds to \eqref{eq:m bound 1}.

On the other hand, we have to recover the loss of time growth $s^2$ in \eqref{esti-na-2}. It can be compensated thanks to the null structures expect for the case when $\theta_0\neq\theta_1$ and $\theta_2=\theta_3$ where only the spinorial null structure from $\theta_0\neq\theta_1$ can be applied to recover one loss of time growth $s$. 
A key observation is that when $\theta_0=\theta_1$ the resonance functions  
$${p}_{(\theta_0,\theta_1)}(\xi,\eta)=\theta_0(\langle\xi\rangle+\langle \xi-\eta\rangle)$$ 
are non-degenerate, which 
enables us to use the time non-resonance via integration by parts in time variable to recover the remaining loss of $s$. 
Moreover, an additional factor $|\eta|$ essential to close the estimates can be obtained as a by product, which we regard as another null structure, see Lemma \ref{lem:linearpart}. 

% On the contrary, when $\thez\neq\theo$, we obtain
% $|\textbf{m}(\xi,0)|\sim\frac{|\xi|}{\braxi}$.
% Then the multiplier
% $s\textbf{m}(\xi,\eta)|\eta|^{-2}$ behaves like $|\eta|^{-3}$. This
% is not enough to recover the loss of time factor $s$ in our estimates.

% To get over this obstacle, we use the spinorial null-structure of 
% $\Pi_{+}(D)\Pi_{-}(D)$ or $\Pi_{-}(D)\Pi_{+}(D)$ for the estimate
% of \eqref{esti-na-1} and \eqref{esti-na-2}.

% Furthermore, in \eqref{esti-na-2}, we need to take into account the
% signs of $\thet$ and $\theth$. When $\thet\neq\theth$, we again
% exploit the spinorial null-structure, but when $\thet=\theth$,
% we use the time non-resonance via integration by parts in time and Lemma \ref{lem:linearpart}. These
% arguments lead us to control the bounds of $x e^{\thez it\brad}\psi_{\thez}$
% and $|x|^{2}e^{\thez it\brad}\psi_{\thez}$ in $H^{2}$.

\subsection{Estimates for \eqref{estimate-n} and \eqref{estimate-1}}\label{subsection w1}  
It follows from \eqref{NHk} that 
\begin{align*}
 \| \psi_{\theta_0}(t) \|_{H^s} \le \ep_0 + C \ep_1^3 \int_0^t (1+s)^{-1} ds,
\end{align*}
which completes the proof of \eqref{estimate-n}. Let us consider \eqref{estimate-1}. Note that
\[
\|x e^{\thez it\jp D}\psi_{\thez}\|_{H^{2}}\sim\|\langle\xi\rangle^{2}\mathcal{F}(x e^{\thez it\jp D}\psi_{\thez})\|_{L_{\xi}^{2}}\sim\|\braxi^{2}\nabla_{\xi}\widehat{f_{\thez}}\|_{L_{\xi}^{2}}.
\]
From the Duhamel's formula \eqref{inteq-f}, $\nabla_{\xi}\widehat{f_{\thez}}$
satisfies that 
\begin{align*}
\nabla_{\xi}\widehat{f_{\thez}}(t,\xi) & =\nabla_{\xi}\widehat{\psi_{0,\thez}}(\xi)+ ic_1\sum_ {\thej \in \{ \pm\}, \ j=1,2,3}\Big[\mathcal I_{{\mathbf{\Theta}}}^{1}(t,\xi)+\mathcal I_{{\mathbf{\Theta}}}^{2}(t,\xi)+ \mathcal I_{{\mathbf{\Theta}}}^{3}(t,\xi)\Big],
\end{align*}
where
\begin{align*}
\mathcal I_{{\mathbf{\Theta}}}^{1}(t,\xi) & =\int_0^t\int_{\mathbb{R}^{3}}\Pi_{\thez}(\xi)\Pi_{\theo}(\xi-\eta)e^{is{p}_{(\thez,\theo)}(\xi,\eta)}|\eta|^{-2}\\
 & \qquad\qquad\qquad\qquad\qquad\qquad\qquad\times\mathcal{F}\langle\psi_{\theta_3},\psi_{\theta_2}\rangle(\eta)\nabla_{\xi}\widehat{f_{\theta_1}}(\xi-\eta)d\eta ds,\\
 \mathcal I_{{\mathbf{\Theta}}}^{2}(t,\xi) &  = \int_0^t\int_{\mathbb{R}^{3}}\nabla_{\xi}\left[\Pi_{\thez}(\xi)\Pi_{\theo}(\xi-\eta)\right]e^{is{p}_{(\thez,\theo)}(\xi,\eta)}|\eta|^{-2}\\
 & \qquad\qquad\qquad\qquad\qquad\qquad\qquad\times\mathcal{F}\langle\psi_{\theta_3},\psi_{\theta_2}\rangle(\eta)\widehat{f_{\theta_1}}(\xi-\eta)d\eta ds,\\
\mathcal I_{{\mathbf{\Theta}}}^{3}(t,\xi) & =i\int_0^t s\int_{\mathbb{R}^{3}}\Pi_{\thez}(\xi)\Pi_{\theo}(\xi-\eta)\mathbf{m}(\xi,\eta)e^{is{p}_{(\thez,\theo)}(\xi,\eta)}|\eta|^{-2}\\
 & \qquad\qquad\qquad\qquad\qquad\qquad\qquad\times\mathcal{F}\langle\psi_{\theta_3},\psi_{\theta_2}\rangle(\eta)\widehat{f_{\theta_1}}(\xi-\eta)d\eta ds,\\
\end{align*}
and 
\begin{align*}
p_{(\thez,\theo)}(\xi,\eta) & =\thez\langle\xi\rangle-\theo\langle\xi-\eta\rangle,\quad
\mathbf{m}_{(\thez,\theo)}(\xi,\eta)  =\nabla_{\xi}p_{(\thez,\theo)}=\thez\frac{\xi}{\langle\xi\rangle}-\theo\frac{\xi-\eta}{\langle\xi-\eta\rangle}.
\end{align*}
It suffices to show the following estimates: for any ${\mathbf{\Theta}}=(\thez,\theo,\thet,\theth)$,
\[
\normo{\langle\xi\rangle^{2} \mathcal I_{{\mathbf{\Theta}}}^{1}(t,\xi)}_{L_{\xi}^{2}}+\normo{\langle\xi\rangle^{2}\mathcal I_{{\mathbf{\Theta}}}^{2}(t,\xi)}_{L_{\xi}^{2}}+\left\Vert \langle\xi\rangle^{2} \mathcal I_{{\mathbf{\Theta}}}^{3}(t,\xi)\right\Vert _{L_{\xi}^{2}}\les\langle t\rangle^{\de_{0}}\ve_{1}^{3}.
\]

\medskip

\textbf{Estimate for $\mathcal I_{{\mathbf{\Theta}}}^{1}$ and $\mathcal I_{{\mathbf{\Theta}}}^{2}$.} 
Using \eqref{Har Hk} and the a priori assumption \eqref{assumption-apriori}, we estimate 
\begin{align*}
&	\normo{\langle\xi\rangle^{2} \mathcal I_{{\mathbf{\Theta}}}^{1}(t,\xi)}_{L_{\xi}^{2}}\\
&\les \int_0^t \| \mathcal{N}(e^{-\theo it\bra{D}}x e^{\theta_1it\langle D\rangle}  \psi_{\theta_1}(s), \psi_{\theta_2}(s),\psi_{\theta_3}(s)) \|_{H^2} ds\\ 
&\les \int_0^t \|xe^{\theo it\bra{D}}\psi_{\theta_1}(s)\|_{H^2} \left( \|\psi_{\theta_2}(s)\|_{L^6}\|\psi_{\theta_3}(s)\|_{H^2} 
+ \|\psi_{\theta_2}(s)\|_{H^2}\|\psi_{\theta_3}(s)\|_{L^6} \right) ds\\ 
&\les  \int_0^t \langle s\rangle^{-1+\de_{0}}\ve_{1}^{3} ds \les \bra{t}^{\de_0}\ve_1^3.
\end{align*}
Since the Dirac projection operator is bounded \eqref{bound of projection}, we estimate similarly that 
\begin{align*}
&	\normo{\langle\xi\rangle^{2} \mathcal I_{{\mathbf{\Theta}}}^{2}(t,\xi)}_{L_{\xi}^{2}}\\
&\les \int_0^t 
\Big\| \mathcal{N}\Big( e^{-\theo it\bra{D}}(\nabla \; \Pi_{\theta_1})(D) e^{\theta_1 it\langle D\rangle}  \psi_{\theta_1}(s), \psi_{\theta_2}(s),\psi_{\theta_3}(s)\Big) \Big\|_{H^2} \\
&\hspace{4cm} + \left\| (\nabla \; \Pi_{\theta_0})(D) \mathcal{N}(   \psi_{\theta_1}(s), \psi_{\theta_2}(s),\psi_{\theta_3}(s)) \right\|_{H^2} ds\\ 
&\les \int_0^t \|\psi_{\theta_1}(s)\|_{H^2} \left( \|\psi_{\theta_2}(s)\|_{L^6}\|\psi_{\theta_3}(s)\|_{H^2}
+ \|\psi_{\theta_2}(s)\|_{H^2}\|\psi_{\theta_3}(s)\|_{L^6} \right) ds \\ 
&\les \int_0^t \langle s\rangle^{-1+\de_{0}}\ve_{1}^{3} ds \les \bra{t}^{\de_0}\ve_1^3.
\end{align*}

\medskip

\textbf{Estimate for $\mathcal I_{{\mathbf{\Theta}}}^{3}$.} We decompose $|\xi|,|\xi-\eta|,|\eta|$
into dyadic pieces $N_{0},N_{1},N_{2}$, respectively. Then, by the triangle
inequality, we get
\begin{align*}
\|\braxi^{2}\mathcal I_{{\mathbf{\Theta}}}^{3}(t ,\xi)\|_{L_{\xi}^{2}} & \les\sum_{N_{j}\in2^{\mathbb{Z}},j=0,1,2}\bra{N_{0}}^{2}\left\Vert \mathcal I_{{\mathbf{\Theta}},\textbf{N}}^{3}(t,\xi)\right\Vert _{L_{\xi}^{2}},
\end{align*}
where 
\begin{align*}
\mathcal I_{{\mathbf{\Theta}},\textbf{N}}^{3}(t,\xi) & := \int_0^t \!\! \int_{\R^3}
\mathbf{m}_{(\thez,\theo),\textbf{N}}(\xi,\eta) \mathcal{F}{\langle\psi_{{\theta_3}},\psi_{{\theta_2}}\rangle}(\eta)e^{-\theo is\langle\xi-\eta\rangle}\widehat{P_{N_1}f_{\theta_1}}(\xi-\eta)d\eta ds 
\end{align*}
and
\begin{align}\begin{aligned}\label{mthetan}
\textbf{N} & :=(N_{0},N_{1},N_{2}),\\
\mathbf{m}_{(\thez,\theo),\textbf{N}}(\xi,\eta) & := \Pi_{\thez}(\xi)\Pi_{\theo}(\xi-\eta)\mathbf{m}_{(\thez,\theo)}(\xi,\eta)|\eta|^{-2}\\
&\hspace{3cm}\times\rho_{N_{0}}(\xi)\rho_{N_{1}}(\xi-\eta)\rho_{N_{2}}(\eta).
\end{aligned}\end{align}
It suffices to show that
\begin{align*}
\sum_{N_{j}\in2^{\mathbb{Z}},j=0,1,2}\bra{N_{0}}^{2}\normo{\mathcal I_{{\mathbf{\Theta}},\textbf{N}}^{3}(t,\xi)}_{L_{\xi}^{2}}\les\bra{t}^{\de_{0}}\ve_{1}^{3}.
\end{align*}
Using H\"older inequality with the pointwise bound $|\mathbf{m}_{(\thez,\theo),\textbf{N}}(\xi,\eta)| \les N_2^{-1}$, we see that
\begin{align}\label{lowfre esti}
\normo{\mathcal I_{{\mathbf{\Theta}},\textbf{N}}^{3}(t,\xi)}_{L_{\xi}^{2}} 
\les \int_0^t N_2^{-1}\|\rho_{N_{0}}\|_{L^{2}} \left\|P_{N_2}\langle\psi_{\theta_3},\psi_{\theta_2}\rangle(s) \right\|_{L^{2}}\left\|P_{N_1}f_{\theta_1}(s)\right\|_{L^{2}} ds.
\end{align}
From the a priori assumption \eqref{assumption-apriori}, we have 
\begin{align}\label{PNf}
	\left\|P_{N_1}f_{\theta_1}(s)\right\|_{L^{2}} \les N_1^\frac32 \langle N_1\rangle^{-10}\ep_1.
\end{align}
We consider the sum over those $\mathbf{N}$ such that $N_0\le \langle s\rangle^{-2}$ and $N_0\ge \langle s\rangle^{-2}$ in the integrand of $\mathcal I_{\mathbf{\Theta},\textbf{N}}^3$ as follows:
\begin{align*}
	 \mathcal I_{{\mathbf{\Theta}},\textbf{N}}^{3}(t,\xi) &= \int_0^t \left[ \sum_{\{\mathbf{N}: N_0\le \langle s\rangle^{-2} \}}  \Big(\cdots \Big)+ \sum_{\{\mathbf{N}: N_0\ge \langle s\rangle^{-2} \}}\Big(\cdots \Big) \right] ds\\
	&=: \mathcal I_{{\mathbf{\Theta}},\textbf{N}}^{3,1}(t,\xi) + \mathcal I_{{\mathbf{\Theta}},\textbf{N}}^{3,2}(t,\xi).
\end{align*}
Thus, applying \eqref{PNf} and \eqref{norm-two} to \eqref{lowfre esti},  we estimate 
\begin{align*}
	 \bra{N_{0}}^{2}\normo{\mathcal I_{{\mathbf{\Theta}},\textbf{N}}^{3,1}(t,\xi)}_{L_{\xi}^{2}} 
	 & \les \int_0^t s\ve_{1}^{3}\sum_{\{\mathbf{N}: N_0\le \langle s\rangle^{-2} \}}\langle N_{0}\rangle^{2}N_{0}^{\frac{3}{2}}N_{1}^{\frac32}\langle N_{1}\rangle^{-10}N_{2}^{\frac32}\langle N_{2}\rangle^{-10}ds\\
	&\quad  \les \int_0^t s \langle s\rangle^{-3}\ve_{1}^{3} ds \les  \bra{t}^{\de_0}\ve_1^3.
\end{align*}
%where we found the faster time decay bound than we claimed in \eqref{i2-esti}.
% Let us consider three cases (i) $N_{0}\les N_{1}\sim N_{2}$,
% (ii) $N_{1}\ll N_{0}\sim N_{2}$, and (iii) $N_{2}\ll N_{0}\sim N_{1}$.
Besides the pointwise bound, one can verify that 
\begin{align}\label{phase}
		\|	\mathbf{m}_{(\theta_0,\theta_1),\textbf{N}}\|_{\rm CM}
		=\normo{\iint_{\R^{3+3}}	\mathbf{m}_{(\theta_0,\theta_1),\textbf{N}}(\xi,\eta)e^{ix\cdot\xi}e^{iy\cdot\eta}d\xi d\eta}_{L_{x,y}^{1}} 
		 \les N_2^{-1}.
\end{align}
% \begin{align}\label{phase}
% \begin{aligned}	
% 	\|	\mathbf{m}_{(\theta_0,\theta_1),\textbf{N}}\|_{\rm CM}
% 	&=\normo{\iint_{\R^{3+3}}	\mathbf{m}_{(\theta_0,\theta_1),\textbf{N}}(\xi,\eta)e^{ix\cdot\xi}e^{iy\cdot\eta}d\xi d\eta}_{L_{x,y}^{1}} \\ 
% 	& \les\left\{ 
% 	\begin{aligned}
% 	& N_{2}^{-2} &&\mbox{for (i),\;(ii),}\\ 
% 	 & \bra{N_{0}}^{-1}N_{2}^{-1}\quad\,&&\mbox{for (iii) }.
% 	\end{aligned}
% 	\right.
% \end{aligned}\end{align}
Indeed, we observe that $\mathbf{m}_{(\theta_0,\theta_1),\textbf{N}}$ satisfies the differential inequalities
\begin{align}\label{dif ineq}
	\Big|\nabla_{\xi}^{n}\nabla_{\eta}^{m}\mathbf{m}_{(\theta_0,\theta_1),\textbf{N}}(\xi,\eta)\Big|\les 
	N_{0}^{-n}N_{2}^{-1-m},  \;\;\; \text{ for } N_0 \les N_1. 
	\end{align}
% \begin{align}\label{dif ineq}
% \Big|\nabla_{\xi}^{n}\nabla_{\eta}^{m}\mathbf{m}_{(\theta_0,\theta_1),\textbf{N}}(\xi,\eta)\Big|\les 
% \left\{ 
% 	\begin{aligned}
% &N_{0}^{-n}N_{2}^{-2-m},  &&\text{ for (i)} \\ 
%   &\bra{N_{0}}^{-1}N_{0}^{-n}N_{2}^{-1-m}, &&\text{ for (iii) } 
% \end{aligned} \right.
% \end{align}
Here, when $N_2\ll N_0\sim N_1$, we used \eqref{eq:m bound 1} for $\theta_0=\theta_1$ or \eqref{decay-null} for $\theta_0\neq\theta_1$.
For $N_0\gg N_1$, we first change variables 
\begin{align*}
	&\normo{\iint_{\R^{3+3}}\mathbf{m}_{(\theta_0,\theta_1),\textbf{N}}(\xi,\eta)e^{ix\cdot\xi}e^{iy\cdot\eta}d\xi d\eta}_{L_{x,y}^{1}}  \\ 
	&\quad = \normo{\iint_{\R^{3+3}}\mathbf{m}_{(\theta_0,\theta_1),\textbf{N}}(\xi,\xi-\eta)e^{ix\cdot\xi}e^{iy\cdot\eta}d\xi d\eta}_{L_{x,y}^{1}}
\end{align*}
and find the differential inequalities 
\begin{align}\label{dif ineq 2}
\Big|\nabla_{\xi}^{n}\nabla_{\eta}^{m}	
\mathbf{m}_{(\theta_0,\theta_1),\textbf{N}}(\xi,\xi-\eta)\Big| \les N_2^{-1}N_0^{-n}N_1^{-m}.
\end{align}
Then, a standard calculation together with \eqref{dif ineq} and \eqref{dif ineq 2}, as we did in \eqref{standard calculation}, yields \eqref{phase}.
Applying the Coifman-Meyer operator estimates, Lemma~\ref{kernel}, with \eqref{phase} we obtain 
\begin{align}
\begin{aligned}\label{Ithree}
& \bra{N_{0}}^{2}\normo{\mathcal I_{{\mathbf{\Theta}},\textbf{N}}^{3,2}(t,\xi)}_{L_{\xi}^{2}}  \\
 & \les \int_0^t s \sum_{\{\mathbf{N}: N_0\ge  \langle s\rangle^{-2}\}}\langle N_{0}\rangle^{2}N_{2}^{-1}\|P_{N_{2}}\langle\psi_{\theta_3},\psi_{\theta_2}\rangle\|_{L_{x}^{\infty}}\|P_{N_{1}}f_{\theta_1}\|_{L_{x}^{2}} ds.
\end{aligned}\end{align}
Then, using the frequency localized estimates \eqref{norm-infty} and \eqref{PNf}, we bound the sum in integrand of \eqref{Ithree} for $N_2\le \langle s\rangle^{-1}$ by  
\begin{align*}
	\int_0^t s \ve_1^3\sum_{\{\mathbf{N}: N_0\ge  \langle s\rangle^{-2}, \ N_2 \le \langle s\rangle^{-1}\}}	\langle N_{0}\rangle^2 N_{2}^{2} N_{1}^{\frac{3}{2}}\langle N_{1}\rangle^{-10} 
	 \les\int_0^t s \langle s\rangle^{-2+\de_{0}}\ve_{1}^{3} ds 
\end{align*}
and the sum in integrand of \eqref{Ithree} for $N_2\ge \langle s \rangle^{-1}$ by 
\begin{align*}
	&\int_0^t s  \ep_1^3\sum_{\{\mathbf{N}: N_0\ge  \langle s\rangle^{-2}, \ N_2 \ge \langle s\rangle^{-1}\}}	\langle N_{0}\rangle^2 N_{2}^{-1} \langle N_{2}\rangle^{-2} N_{1}^{\frac{3}{2}}\langle N_{1}\rangle^{-10} \langle s\rangle^{-3}   ds\\
	& \les\int_0^t s \langle s\rangle^{-2+\de_{0}}\ve_{1}^{3} ds.
\end{align*}

\subsection{Estimates for \eqref{estimate-2}}
In this subsection, we devote to establish \eqref{estimate-2}. By Plancherel's theorem, $\||x|^{2}e^{\thez it\brad}\psi_{\thez}\|_{H^{2}}\sim\left\Vert \braxi^{2}\nabla_{\xi}^{2}\widehat{f_{\thez}}\right\Vert _{L_{\xi}^{2}}$.
 If we take $\nabla_{\xi}^{2}$
to \eqref{inteq-f}, derivatives may fall on  $\wh{f_{\theta_1}}(\xi-\eta)$, $\Pi_{\theta_{0}}(\xi)\Pi_{\theta_{1}}(\xi-\eta),$ or 
$e^{is{\bf p}_{(\theta_{0,}\theta_{1})}}$.   We handle each term case by case. 
The main terms occur when $\nabla_{\xi}^{2}$ falls only on the resonance function, $e^{is{\bf p}_{(\theta_{0,}\theta_{1})}}$ (see \textbf{Case D} below).
Since the other cases are estimated similarly as in the proof of \eqref{estimate-1},  we will omit the details.  

\smallskip

\textbf{Case A:} At least one derivative $\nabla_{\xi}$ falls on $\widehat{f_{\theta_1}}$. \\ 
The proof is essentially same as the proof of \eqref{estimate-1}.
We have only to redo the estimates for $\mathcal I_{\mathbf{\Theta}}^1,\mathcal I_{\mathbf{\Theta}}^2$ and $\mathcal I_{\mathbf{\Theta}}^3$ in subsection~\ref{subsection w1} replacing $\widehat{f_{\theta_1}}$ with $\nabla_\xi \widehat{f_{\theta_1}}$. 

\smallskip

\textbf{Case~B:} $\nabla_{\xi}^{2}$ falls on  $\Pi_{\thez}(\xi)\Pi_{\theo}(\xi-\eta)$. \\ 
We simply use the rough bound
\[
\nabla_{\xi}^{2}\left[\Pi_{\thez}(\xi)\Pi_{\theo}(\xi-\eta)\right]\lesssim1
\]
and follow the argument for $\mathcal I_{\mathbf{\Theta}}^{2}$ in subsection~\ref{subsection w1}. 

\smallskip

\textbf{Case C:} Only one derivative $\nabla_{\xi}$ falls on $e^{is{\bf p}_{(\theta_{0,}\theta_{1})}}$.
Then, the multipliers in integrals correspond to one of the followings:
\begin{enumerate}[(1)]
	\item $s\mathbf{m}_{(\thez,\theo)}(\xi,\eta)\nabla_{\xi}\left(\Pi_{\thez}(\xi)\Pi_{\theo}(\xi-\eta)\right)$.
	\item $s\nabla_{\xi}\mathbf{m}_{(\thez,\theo)}(\xi,\eta)\; \Pi_{\thez}(\xi)\Pi_{\theo}(\xi-\eta)$.
\end{enumerate}
Both can be dealt with by repeating the argument in the proof of  $\mathcal I_{\mathbf{\Theta}}^{3}$ in subsection~\ref{subsection w1}. Actually,
the additional $\xi$-derivative, $\nabla_\xi$, compared to \eqref{mthetan} makes a bound in \eqref{phase} better by giving a $\langle N_0\rangle^{-1}$ factor.

\smallskip

\textbf{Case D:} $\nabla_\xi^2$ falls on $e^{is{\bf p}_{(\theta_{0,}\theta_{1})}}$. Then, we consider the following integral
\begin{align*}
 \langle \xi\rangle^2\mathcal J_{{\mathbf{\Theta}}}(t,\xi) & := \int_0^t  \int_{\R^3} \langle \xi\rangle^2 s^2\left[\mathbf{m}_{(\thez,\theo)}(\xi,\eta) \otimes \mathbf{m}_{(\thez,\theo)}(\xi,\eta)\right](\xi,\eta)|\eta|^{-2}  \\
&\hspace{1cm} \times \Pi_{\thez}(\xi)\Pi_{\theo}(\xi-\eta) e^{is{p}_{(\thez,\theo)}(\xi,\eta)}\widehat{\langle\psi_{\theta_3},\psi_{\theta_2}\rangle}(\eta)\widehat{f_{\theta_1}}(\xi-\eta)d\eta ds
\end{align*}
and prove that 
\begin{align}\label{caseD goal}
\left\| \bra{\xi}^2 \mathcal J_{{\mathbf{\Theta}}}(s,\xi)  \right\|_{L_\xi^2(\R^3)} 
\les C\langle t\rangle^{2\delta_0}\ep_1^3.
\end{align}
Most of all, one needs to handle an extra time growth $s^{2}.$ To compensate the time growth,
 we need to scrutinize the inner product $\jp{\psi_{\theta_3},\psi_{\theta_2}}$. By applying the dyadic decomposition, we may write
\begin{align*}
\mathcal J_{{\mathbf{\Theta}}}(t,\xi)&=\sum_{N_{j}\in2^{\mathbb{Z}},j=0,1,2}\mathcal  J_{{\mathbf{\Theta}},\textbf{N}}(t,\xi),
\end{align*}where
\begin{align*}
	\mathcal J_{{\mathbf{\Theta}},\textbf{N}}(t,\xi) &= \int_0^t s^2\int_{\R^3}\mathfrak{M}_{(\theta_0,\theta_1),\textbf{N}}(\xi,\eta)e^{is{p}_{(\thez,\theo)}(\xi,\eta)}\\
	&\hspace{3cm}\times\widehat{ P_{N_2}\langle\psi_{\theta_3},\psi_{\theta_2}\rangle}(\eta)\widehat{P_{N_{1}}f_{\theta_1}}(\xi-\eta)d\eta ds, 
\end{align*}
and
\begin{align*}
\mathfrak{M}_{(\theta_0,\theta_1),\textbf{N}}(\xi,\eta)
&=\left(\thez\frac{\xi}{\langle\xi\rangle}-\theo\frac{\xi-\eta}{\langle\xi-\eta\rangle}\right)^2 \Pi_{\thez}(\xi)\Pi_{\theo}(\xi-\eta)|\eta|^{-2}\\
&\hspace{5cm}\times\rho_{N_{0}}(\xi)\rho_{N_{1}}(\xi-\eta)\rho_{N_{2}}(\eta).
\end{align*}
We deal with the following three cases separately:
\begin{enumerate}[$(i)$]
\item $\theta_0=\theta_1$,
\item $\theta_0\neq\theta_1$ and $\theta_2\neq\theta_3$,
\item $\theta_0\neq\theta_1$ and $\theta_2=\theta_3$.
\end{enumerate}
\textit{Estimates for (i)}.
As for \eqref{caseD goal}, it suffices to show that
\begin{align}\label{caseD goal2}
\sum_{N_j\in2^{\Z},j=0,1,2}\bra{N_{0}}^{2}\normo{\mathcal  J_{{\mathbf{\Theta}},\textbf{N}}(t)}_{L_{\xi}^{2}} \les\bra{t}^{2\de_{0}}\ve_{1}^{3}.
\end{align}
For $N_0$ with $N_0\le \langle s\rangle^{-2}$ in the integrand of $\mathcal J_{\mathbf{\Theta},\textbf{N}}$, the desired bound can be obtained as in \eqref{lowfre esti}.
For the remaining contribution, we apply the Coifman-Meyer multiplier estimates.
Using the differential inequalities \eqref{eq:m bound 1} and \eqref{decay-null}, one can show as in \eqref{phase} that 
\begin{align}\label{mul M}
\normo{ \mathfrak{M}_{(\theta_0,\theta_0),\textbf{N}} }_{\rm CM} \les 1.
\end{align}
Applying Lemma~\ref{kernel} with \eqref{mul M} and using \eqref{norm-infty}, we estimate that 
\begin{align*}
	 &\bra{N_{0}}^{2}\normo{\mathcal J_{{\mathbf{\Theta}},\textbf{N}}(t)}_{L_{\xi}^{2}}\\
	&\quad\les \int_0^t s^2 \sum_{N_0\ge \langle s\rangle^{-2}} \bra{N_{0}}^{2}\|P_{N_{2}}\langle\psi_{\theta_3},\psi_{\theta_2}\rangle\|_{L_{x}^{\infty}}\|P_{N_{1}}f_{\theta_1}\|_{L_{x}^{2}} ds\\
	&\quad \les \int_0^t s^2 \ve_{1}^{3}\sum_{N_0\ge \langle s\rangle^{-2}}\bra{N_{0}}^{2} \langle N_{1}\rangle^{-10}N_{1}^{\frac{3}{2}}\min\left(N_{2}^{3},\langle N_{2}\rangle^{-2}\langle s\rangle^{-3}\right) ds\\
	& \quad \les \int_0^t\langle s\rangle^{-1+2\de_{0}}\ve_{1}^{3} ds \les \bra{t}^{2\de_0} \ve_1^3.
   \end{align*}

\noindent \textit{Estimates for (ii)}.
We proceed as in the previous case. The sum in \eqref{caseD goal2} for $N_0\le \langle s\rangle^{-1}$ can be obtained as before. In the multiplier estimates, 
since $\theta_0\neq\theta_1$, we can only use the spinorial null structure from $ \Pi_{\thez}(\xi)\Pi_{-\thez}(\xi-\eta)$ to obtain 
\begin{align}\label{mul M2}
\normo{ \mathfrak{M}_{(\theta_0,-\theta_0),\textbf{N}} }_{\rm CM} \les N_2^{-1},
\end{align}
which is worse than \eqref{mul M} when $N_2\ll 1$.
This can be compensated, however, by the null structure in the bilinear form $P_{N_2}\langle\psi_{-\theta_2},\psi_{\theta_2}\rangle$. Indeed, applying Lemma~\ref{kernel} with \eqref{mul M2} and then using \eqref{null-bilinear} instead of \eqref{norm-infty}, we estimate  
\begin{align*}
	&\bra{N_{0}}^{2}\normo{\mathcal J_{{\mathbf{\Theta}},\textbf{N}}(t)}_{L_{\xi}^{2}}\\
   &\quad\les \int_0^t s^2\sum_{N_0\ge \langle s\rangle^{-2}} \bra{N_{0}}^{2}N_2^{-1} \|P_{N_{2}}\langle\psi_{-\theta_2},\psi_{\theta_2}\rangle\|_{L_{x}^{\infty}}\|P_{N_{1}}f_{\theta_1}\|_{L_{x}^{2}} ds \\
   &\quad \les \int_0^t s^2\ve_{1}^{3}\sum_{N_0\ge \langle s\rangle^{-2}}\bra{N_{0}}^{2} \langle N_{1}\rangle^{-10}N_{1}^{\frac{3}{2}}\min\left(N_{2}^{3},\langle N_{2}\rangle^{-2}\langle s\rangle^{-3}\right) ds\\
   &\quad \les \int_0^t\langle s\rangle^{-1+2\de_{0}}\ve_{1}^{3} ds \les \bra{t}^{2\de_0}\ve_1^3.
  \end{align*}

\noindent \textit{Estimates for (iii)}.
It remains to handle the case $\thez\neq\theo$ and $\thet=\theth$. A key observation is that the resonance function 
\begin{align*}
	P_{(\theta_0,\theta_1)}(\xi,\eta) = \theta_0 \left( \bra{\xi}+\bra{\xi-\eta}\right) 
\end{align*}
is non-degenerate which enables to perform an integration by parts in time variables
or exploit the time non-resonancy. 
In this procedure, the condition $\theta_2=\theta_3$ plays an important role to obtain a factor $N_2$ essential to bound the singularity.
%\begin{align*}
%E(\xi,\eta,\sigma) & :=\frac{2\eta(|\xi|^{2}(\xi-\eta)-\xi)}{\braxi\langle\xi-\eta\rangle}+\frac{\eta(2\xi-\eta)}{\langle\xi-\eta\rangle^{2}}\left(\frac{|\xi|^{2}(\braxi^{2}+\langle\xi-\eta\rangle^{2})}{\braxi^{2}}-\braxi^{2}\right)\\
% & \qquad+2\left(\langle\eta+\sigma\rangle-\langle\sigma\rangle\right)\left(2\braxi+2\langle\xi-\eta\rangle+\langle\eta+\sigma\rangle-\langle\sigma\rangle\right).
%\end{align*}
We begin with writing the integral $\mathcal J_{\mathbf{\Theta},\textbf{N}}$  as 
\begin{align*}
 &\mathcal J_{{\mathbf{\Theta}},\textbf{N}}(t,\xi) \\
 &=\int_0^t s^2\int_{\R^3}\mathfrak{M}_{(\theta_0,-\theta_0),\textbf{N}}(\xi,\eta)e^{is{p}_{(\thez,\theo)}(\xi,\eta)}\widehat{P_{N_2}\langle\psi_{\theta_3},\psi_{\theta_2}\rangle}(s,\eta)\widehat{P_{N_{1}}f_{\theta_1}}(s,\xi-\eta)d\eta ds\\
 &=-\thez i \int_0^t s^2\int_{\R^3} \mathfrak{M}_{(\theta_0,-\theta_0),\textbf{N}}(\xi,\eta) \frac{\partial_s e^{is{p}_{(\thez,\theo)}(\xi,\eta)}}{\bra{\xi} + \bra{\xi-\eta}}\widehat{P_{N_2} \langle\psi_{\theta_3},\psi_{\theta_2}\rangle}(s,\eta)\\
 &\hspace{9cm}\times\widehat{P_{N_{1}}f_{\theta_1}}(s,\xi-\eta)d\eta ds.
\end{align*}
From now, we fix $\theta_0=\theta_2=+$ for simplicity and denote 
${p}_{(+,-)}$ by $p$.
Let us define a multiplier $\widetilde{\mathfrak M}_{\textbf{N}}$ by
\begin{align*}
\widetilde{\mathfrak M}_{\textbf{N}}(\xi,\eta,\sigma)&=\left(\frac{\xi}{\langle\xi\rangle}+\frac{\xi-\eta}{\langle\xi-\eta\rangle}\right)^2 \Pi_{+}(\xi)\Pi_{-}(\xi-\eta)\frac{|\eta|^{-2}}{\bra{\xi}+ \bra{\xi-\eta}}\\
&\hspace{5cm}\times\rho_{N_{0}}(\xi)\rho_{N_{1}}(\xi-\eta)\rho_{N_{2}}(\eta).
\end{align*}
Performing the integration by parts in time, we obtain
\begin{align*}
	\begin{aligned} & \mathcal J_{{\mathbf{\Theta}, \textbf{N}}}(t,\xi)\\
		& =t^{2}\int_{\mathbb{R}^{3}}\widetilde{\mathfrak M}_{\textbf{N}}(\xi,\eta)e^{it  \left( \bra{\xi}+\bra{\xi-\eta}\right) } \mathcal{F}\left( P_{N_2}\bra{ \psi_{+},\psi_{+}} \right) (t,\eta)\widehat{P_{N_{1}}f_{-}}(t,\xi-\eta) d\eta\\
		& \;\;+\int_{0}^{t}2s\int_{\mathbb{R}^{3}}\widetilde{\mathfrak M}_{\textbf{N}}(\xi,\eta)e^{is  \left( \bra{\xi}+\bra{\xi-\eta}\right) } \mathcal{F}\left( P_{N_2}\bra{ \psi_{+},\psi_{+}} \right) (s,\eta)\widehat{P_{N_{1}}f_{-}}(s,\xi-\eta) d\eta ds\\
		&\;\; +\int_{0}^{t}s^{2}\int_{\mathbb{R}^{3}}\widetilde{\mathfrak M}_{\textbf{N}}(\xi,\eta)e^{is  \left( \bra{\xi}+\bra{\xi-\eta}\right) }\partial_{s}  \left( P_{N_2}\bra{ \psi_{+},\psi_{+}} \right)(s,\eta) 
		\widehat{P_{N_{1}}f_{-}}(s,\xi-\eta) d\sigma d\eta\,ds\\
		&\;\; +\int_{0}^{t}s^{2}\int_{\mathbb{R}^{3}}\widetilde{\mathfrak M}_{\textbf{N}}(\xi,\eta)e^{is  \left( \bra{\xi}+\bra{\xi-\eta}\right) }\mathcal{F}\left( P_{N_2}\bra{ \psi_{+},\psi_{+}} \right) (s,\eta)\widehat{\partial_{s}P_{N_{1}}f_{-}}(s,\xi-\eta) d\eta\,ds\\
%		&\;\; +\int_{0}^{t}s^{2}\iint_{\mathbb{R}^{3+3}}\mathfrak{m}_{\textbf{N}}'(\xi,\eta,\sigma)e^{is\phase(\xi,\eta,\sigma)}\rho_{N_{2}}(\eta)\Big\langle\widehat{\partial_{s}f_{2}}(\sigma),\widehat{f_{2}}(\eta+\sigma)\Big\rangle\\
%		& \hspace{7cm}\times\widehat{P_{N_{1}}f_{1}}(\xi-\eta)d\sigma d\eta\,ds\\
		& =:\mathcal L_{ \textbf{N}}^{1}(t,\xi)+\mathcal L_{ \textbf{N}}^{2}(t,\xi)+\mathcal L_{ \textbf{N}}^{3}(t,\xi)+\mathcal L_{\textbf{N}}^{4}(t,\xi).
	\end{aligned}
\end{align*}
The first two terms can be dealt with analogously to the 
estimates for $\mathcal I_{\mathbf{\Theta}}^{3}$ in the previous subsection by using the following multiplier bounds:
\begin{align}\label{eq:bound m'}
		\left\| \iint e^{ix\cdot \xi} e^{iy\cdot \eta}\widetilde{\mathfrak M}_{\textbf{N}}(\xi,\eta) d\xi d\eta  \right\|_{L_{x,y}^1} &\lesssim \min\left(  \langle N_1\rangle^{-1}, \langle N_2\rangle^{-1} \right) {N_{2}^{-1}}.
\end{align}
Next, we consider $\mathcal L_{ \textbf{N}}^{3}(t,\xi)$. First, taking the time derivative to \eqref{maineq-decou} yields that
\begin{align*}
\partial_{s} \left[\bra{ \psi_{+},\psi_{+}} \right]
	&= i\Big( \bra{\bra{D}\psi_+, \psi_+} -  \bra{\psi_+, \bra{D}\psi_+} \Big)  \\
	&\quad + \Big\langle\psi_+, ic_1\Pi_{+}(D)\mathcal{N}({\psi,\psi,\psi}) \Big\rangle 
	+ \Big\langle ic_1\Pi_{+}(D)\mathcal{N}({\psi,\psi,\psi}),\psi_+ \Big\rangle.
\end{align*}
Plugging this into $\mathcal L_{ \textbf{N}}^{3}(t,\xi)$, we suffice to consider the following two integrals:
\begin{align*}
	\mathcal L_{ \textbf{N}}^{3,1}(t,\xi)&= \int_{0}^{t}s^{2}\int_{\mathbb{R}^{3}}\widetilde{\mathfrak M}_{\textbf{N}}(\xi,\eta)e^{is \left( \bra{\xi}+\bra{\xi-\eta}\right) } \widehat{P_{N_{1}}f_{-}}(s,\xi-\eta)\\
	     &\hspace{1cm}\times \mathcal F  P_{N_2}\left(\bigbra{\bra{D}\psi_{+},\psi_{+}} - \bigbra{\psi_{+},\bra{D}\psi_{+}} \right)(s,\eta) d\eta\,ds,\\
	\mathcal L_{ \textbf{N}}^{3,2}(t,\xi)&= \int_{0}^{t}s^{2}\int_{\mathbb{R}^{3}}\widetilde{\mathfrak M}_{\textbf{N}}(\xi,\eta)e^{is \left( \bra{\xi}+\bra{\xi-\eta}\right) } \widehat{P_{N_{1}}f_{-}}(s,\xi-\eta)\\
	     &\hspace{2.5cm}\times\mathcal F  P_{N_2}\Big\langle\psi_+, \Pi_{+}(D)\mathcal{N}({\psi,\psi,\psi}) \Big\rangle  (s,\eta) d\eta\,ds.
\end{align*}
To prove \eqref{caseD goal}, we have to show that 
\begin{align*}
	\sum_{\mathbf{N}} \langle N_0 \rangle^2 \left(  \|\mathcal L_{ \textbf{N}}^{3,1}(t,\xi)\|_{L^2} + \|\mathcal L_{ \textbf{N}}^{3,2}(t,\xi)\|_{L^2} \right) \les \langle t \rangle^{2\delta_0} \ep_1^3.
\end{align*}
We can easily estimate the sum over those $\mathbf{N}$ such that $N_0\le \langle s\rangle^{-2}$ in the integrand of $\mathcal L_{\textbf{N}}^{3,1}$ as in \eqref{lowfre esti}.
For the remaining contribution, we apply the Coifman-Meyer estimates with \eqref{eq:bound m'}.
For $\mathcal L_{ \textbf{N}}^{3,1}$, we estimate 
\begin{align}\label{eq:L31-esti}
	\begin{aligned}
&\langle N_0 \rangle^2  \|\mathcal L_{ \textbf{N}}^{3,1}(t,\xi)\|_{L^2} 	\\
&\les   \langle N_0 \rangle^2  \int_{0}^{t}s^{2} \sum_{\{ \mathbf{N} : N_0\ge\langle s\rangle^{-2}\} } N_2^{-1}  	\normo{P_{N_1}\psi_{-}(s)}_{L^2}\\
& \hspace{2cm} \times \left\|P_{N_2}\left(\bigbra{\bra{D}\psi_{+}(s),\psi_{+}(s)} -  \bigbra{\psi_{+}(s),\bra{D}\psi_{+}(s)} \right) \right\|_{L^{\infty}}ds
\end{aligned}
\end{align}
Here, we take advantage of the structure in $L^\infty$-norm. Indeed, by using \eqref{eq:linearpart}, we bound the sum in the above by  
\begin{align*}
\eqref{eq:L31-esti}&\les \ve_1^3 \int_{0}^{t}s^{2} \sum_{\{ \mathbf{N} : N_0\ge\langle s\rangle^{-2}\} } \langle N_0 \rangle^2 N_1^{\frac32}\bra{N_1}^{-10}\min \left( \bra{N_2}^{-2}\bra{s}^{-3+\frac{\de_0}{10}}, N_2^3\right)ds\\
&\les \ve_1^3 \int_0^t s^2  \left( \sum_{N_2 \le \bra{s}^{-1}}N_2^3 + \sum_{N_2 \ge \bra{s}^{-1}}\bra{N_2}^{-2}\bra{s}^{-3+\frac{\de_0}{10}} \right) ds\\
&\les \bra{t}^{2\de_0}\ve_1^3.
\end{align*}
For $\mathcal L_{ \textbf{N}}^{3,2}$, using \eqref{esti-g-1} we estimate 
\begin{align*}
	\begin{aligned} & \langle N_0 \rangle^2 \left\| \mathcal L_{ \textbf{N}}^{3,2}(t,\xi)\right\|_{L^{2}}\\
		&\les \int_{0}^{t}s^{2}\sum_{\{ \mathbf{N} : N_0\ge\langle s\rangle^{-2}\} }  \langle N_0 \rangle^2 	\normo{P_{N_1}\psi_{-}(s)}_{L^2} 
		\left\|  P_{N_2}\Big\langle\psi_+, \Pi_{+}(D)\mathcal{N}({\psi,\psi,\psi}) \Big\rangle(s)  \right\|_{L^\infty} ds\\
		&\les \ve_1^5\int_{0}^{t}s^{2}\sum_{\{ \mathbf{N} : N_0\ge\langle s\rangle^{-2}\} }  \langle N_0 \rangle^2 N_1^{\frac32}\bra{N_1}^{-10}\min \left(N_2^3 \bra{s}^{-1}, \bra{N_2}^{-2}\bra{s}^{-4+\frac43\de_0} \right)ds\\
		&\les \ve_1^3 \int_0^t s^2  \left( \sum_{N_2 \le \bra{s}^{-\frac34}}N_2^3\bra{s}^{-1} + \sum_{N_2 \ge \bra{s}^{-\frac34}}N_2^{-1}\bra{N_2}^{-2}\bra{s}^{-4+\frac43\de_0} \right) ds\\
		&\les \bra{t}^{2\de_0}\ve_1^3.
	\end{aligned}
\end{align*}
Finally, consider $\mathcal L_{\textbf{N}}^{4}$. The time derivative of $P_{N_1}f_{-}$ is given by 
\begin{align*}
	\partial_{s} P_{N_1}f_{-}(s)=e^{- is\bra{D}}P_{N_1}\Pi_{-}(D)\mathcal{N}(\psi,\psi,\psi)(s).
\end{align*}
Since the Dirac projection operator is bounded in $L^2$, using \eqref{esti-g-2}, we have 
\begin{align*}
\left\| \partial_{s} P_{N_1}f_{-}(s) \right\|_{L^2}	\les \min\left( N_1^\frac32\langle s\rangle^{-\frac52}, \langle N_1\rangle^{-10} \langle s\rangle^{-1}  \right)\ep_1^3.
\end{align*}
With the help of an additional time decay $\langle s\rangle^{-1}$ compared to \eqref{PNf}, we can show as before that 
\begin{align*}
	\sum_{\mathbf{N}} \langle N_0 \rangle^2   \|\mathcal L_{ \textbf{N}}^{4}(t,\xi)\|_{L^2}  \les \langle t \rangle^{2\delta_0} \ep_1^3.
\end{align*}
Indeed, for the sum over $N_0\le \langle s\rangle^{-2}$ in the integrand can be estimated by using the H\"older inequality, while for $N_0\ge \langle s\rangle^{-2}$ applying the Coifman-Meyer estimates, Lemma~\ref{kernel}, with \eqref{eq:bound m'}.

\section{Modified asymptotic states}\label{sec-infty}

\global\long\def\freq{{(\xi,\eta,\sigma)}}%

This section is devoted to proving  Proposition \ref{prop-scatt}.
We assume the a priori bound \eqref{assumption-apriori} and we will justify 
the phase correction for modified scattering profile
\begin{align*}
\left\{ \begin{aligned}B(t,\xi) & =B_{+}(t,\xi)+B_{-}(t,\xi),\\
B_{\theta}(t,\xi) & =\frac{c_1}{(2\pi)^{3}}\int_{0}^{t}\int_{\mathbb{R}^{3}}\left|\frac{\xi}{\langle\xi\rangle}+\theta\frac{\sigma}{\langle\sigma\rangle}\right|^{-1}\abs{\wh{\psi_{\theta}}(\sigma)}^{2}d\sigma\frac{\rho(s^{-\frac{3}{k}}\xi)}{\langle s\rangle}ds,
\end{aligned}
\right.
\end{align*}
and
\[
g_{\thez}(t,\xi)=e^{iB(t,\thez\xi)}e^{\thez it\braxi}\wh{\psi_{\thez}}(t,\xi)
\]
where $\theta, \thez \in \{ \pm \}$. The phase correction $B(t,\xi)$ is required to remedy the nonlinear interaction when the resonance function is degenerate, specifically, the combinations of sign satisfies \eqref{Theta0022}. The explicit form of $B(t,\xi)$ is derived from the computation via the Taylor expansion in the fourier space. 

To prove \eqref{bound-linfty}, we will follow the argument in \cite{pusa}. We show that if $t_{1}\le t_{2}\in[M-2,2M]\cap[0,T]$
for a dyadic number $M\in2^{\mathbb{N}}$, then for some $\de_{0}>0$,
\begin{align}
\normo{\braxi^{10}\Big(g_{\thez}(t_{2},\xi)-g_{\thez}(t_{1},\xi)\Big)}_{L_{\xi}^{\infty}}\les M^{-\de_{0}}\ve_{1}^{3}.\label{goal-modi}
\end{align}
Unlike the representation $\mathcal I_{\mathbf{\Theta}}$ of \eqref{inteq-f}, we abbreviate the integrand of time integral in the nonlinear part of \eqref{inteq-f} by
\begin{align*}
	\mathcal{I}_{\mathbf{\Theta}}(s,\xi)
	& :=\int_{\mathbb{R}^{3}}\Pi_{\theta_0}(\xi)e^{is{p}_{(\theta_0,\theta_1)}(\xi,\eta)}|\eta|^{-2} \mathcal{F}\langle\psi_{\theta_3},\psi_{\theta_2}\rangle(s,\eta) \widehat{f_{\theo}}(s,\xi-\eta)d\eta  \\ 
	&=\iint_{\mathbb{R}^{3+3}}\Pi_{\theta_0}(\xi)e^{is \left( {p}_{(\theta_0,\theta_1)}(\xi,\eta) -\theta_2 \langle \eta+\sigma\rangle+\theta_3\langle \sigma\rangle \right) }|\eta|^{-2} \\ 
	&\qquad\qquad\qquad\qquad \times \Big\langle\widehat{f}_{\theta_3}(s,\sigma),\widehat{f}_{\theta_2}(s,\eta+\sigma)\Big\rangle \widehat{f_{\theo}}(s,\xi-\eta)d\sigma d\eta. \end{align*}
By the change of  variables as 
\[
\sigma \longmapsto \xi-\eta +\sigma, \qquad \eta \longmapsto -\eta,
\]
we obtain 
\begin{align*}
&\mathcal I_{{\mathbf{\Theta}}}(s,\xi)=\iint_{\R^{3+3}}e^{is\phasep\freq}\Pi_{\thez}(\xi)|\eta|^{-2}\wh{f_{\theo}}(s,\xi+\eta)\\
&\hspace{5cm}\times\bigbra{\wh{f_{\theth}}(s,\xi+\eta+\sigma),\wh{f_{\thet}}(s,\xi+\sigma)}d\eta d\sigma.
\end{align*}
where, in the view of \eqref{function-resonance}, the resonance function is given
\[
{p}_{{\mathbf{\Theta}} } := \thez\braxi-\theo\langle\xi+\eta\rangle-\thet\langle\xi+\sigma\rangle+\theth\bra{\xi+\eta+
	\sigma},\;\;( {\mathbf{\Theta}} =(\thez,\theo,\thet,\theth)).
\]
 Among the contribution $\mathcal I_{{\mathbf{\Theta}}}$'s, the cases when $\theta_0=\theta_1$ and $\theta_2=\theta_3$
are critical in the sense of scattering where the phase correction
$e^{iB(t,\thez\xi)}$ is required to the asymptotic formula.

We first decompose $\mathcal I_{{\mathbf{\Theta}}}$ dyadically in terms of $|\eta|\sim L$ into 
\begin{align*}
\begin{aligned} 
& \mathcal I_{{\mathbf{\Theta}}}(s,\xi)\\
 &=ic_{1}\iint_{\R^{3+3}}e^{is\phasep\freq}\Pi_{\thez}(\xi)|\eta|^{-2}\wh{f_{\theo}}(s,\xi+\eta)\\
 &\hspace{5cm}\times\bigbra{\wh{f_{\theth}}(s,\xi+\eta+\sigma),\wh{f_{\thet}}(s,\xi+\sigma)}d\eta d\sigma\\
 & =\mathcal I_{{\mathbf{\Theta}},L_0}(s,\xi)+\sum_{L\in2^{\Z},L>L_{0}}\mathcal I_{{\mathbf{\Theta}},L}(s,\xi),
\end{aligned}
\end{align*}
where
\begin{align*}
\mathcal I_{{\mathbf{\Theta}},L_0}(s,\xi) & :=ic_{1}\iint_{\R^{3+3}}e^{is\phasep\freq}\Pi_{\thez}(\xi)|\eta|^{-2}\rho_{\le L_{0}}(\eta)\wh{f_{\theo}}(s,\xi+\eta)\\
 & \qquad\qquad\qquad\qquad\qquad\qquad\times\bigbra{\wh{f_{\theth}}(s,\xi+\eta+\sigma),\wh{f_{\thet}}(s,\xi+\sigma)}d\eta d\sigma,\\
\mathcal I_{{\mathbf{\Theta}},L}(s,\xi) & :=ic_{1}\iint_{\R^{3+3}}e^{is\phasep\freq}\Pi_{\thez}(\xi)|\eta|^{-2}\rho_{L}(\eta)\wh{f_{\theo}}(s,\xi+\eta)\\
 & \qquad\qquad\qquad\qquad\qquad\qquad\times\bigbra{\wh{f_{\theth}}(s,\xi+\eta+\sigma),\wh{f_{\thet}}(s,\xi+\sigma)}d\eta d\sigma,
\end{align*}
and $L_{0}\in2^{\Z}$ such that
\begin{align}
L_{0}\sim M^{\left(-\frac{3}{4}+\frac{1}{30}\right)}.\label{eq-lzero}
\end{align}
Plugging the above decomposition of $\mathcal I_{\mathbf{\Theta}}$ into $g_{\thez}$, we have for $\theta_0\in\{\pm\}$,
\begin{align}
	\begin{aligned}\label{eq-deriv-g}
 & \partial_{s}g_{\thez}(s,\xi)= e^{-iB(s,\thez\xi)}\\
 & \quad\times\left[\sum_{\substack{\theta_1,\theta_2,\theta_3\in\{\pm\}}}\left(\mathcal I_{{\mathbf{\Theta}},L_0}(s,\xi)+\sum_{L>L_{0}} \mathcal I_{{\mathbf{\Theta}},L}(s,\xi)\right)-i\left[\partial_{s}B(s,\thez\xi)\right]\wh{f_{\thez}}(s,\xi)\right].
 \end{aligned}
\end{align}
In the estimates of \eqref{eq-deriv-g}, the cancellation effect from  $\partial_t B(t,\thez\xi)$ play a role only when $\theta_0=\theta_1$ and $\theta_2=\theta_3$ denoted by 
\begin{align}
\label{Theta0022} {\mathbf{\Xi}}:=(\thez,\thez,\thet,\thet)
\end{align}
so, we divide the cases into $\mathbf{\Theta} = \mathbf{\Xi}$ and $\mathbf{\Theta} \neq \mathbf{\Xi}$.
Thus, to prove \eqref{goal-modi}, it suffices to show that for $\xi$ with 
$|\xi|\sim N_0 \in 2^{\Z}$,
\begin{align}
\sum_{\theta_1,\theta_2,\theta_3\in\{\pm\}}\left|\int_{t_{1}}^{t_{2}}e^{-iB(s,\thez\xi)}\mathcal I_{{\mathbf{\Theta}},L_0}(s,\xi)\left(1-\rho(s^{-\frac3k}\xi)\right)ds\right| & \les\ve_{1}^{3}M^{-\de_{0}}\bra{N_0}^{-10},\label{eq-modi-part1}
\end{align}
\begin{align}
	\begin{aligned}
&\left|\int_{t_{1}}^{t_{2}}e^{-iB(s,\thez\xi)}\left[\mathcal I_{\mathbf{\Xi},L_0}(s,\xi)\rho(s^{-\frac3k}\xi)-i\partial_{s}B(s,\thez\xi)\wh{f_{\thez}(s,\xi)}\right]ds\right|\\
&\qquad \les\ve_{1}^{3}M^{-\de_{0}}\bra{N_0}^{-10},\label{eq-modi-part2}
\end{aligned}
\end{align}
\begin{align}
&\left|\int_{t_{1}}^{t_{2}}e^{-iB(s,\thez\xi)}\sum_{{\theta_1,\theta_2,\theta_3\in\{\pm\}, \mathbf{\Theta}\neq \mathbf{\Xi}}}\mathcal I_{{\mathbf{\Theta}},L_0}(s,\xi)\rho(s^{-\frac3k}\xi)ds\right| \les\ve_{1}^{3}M^{-\de_{0}}\bra{N_0}^{-10},\label{eq-modi-part2-2}
\end{align}
and
\begin{align}
\sum_{\theta_1,\theta_2,\theta_3\in\{\pm\}}\left|\int_{t_{1}}^{t_{2}}e^{-iB(s,\thez\xi)}\sum_{L>L_{0}}\mathcal I_{{\mathbf{\Theta}},L}(s,\xi)ds\right| & \les\ve_{1}^{3}M^{-\de_{0}}\bra{N_0}^{-10}.\label{eq-modi-part3}
\end{align}
The phase modification term in \eqref{eq-modi-part2} will cancel a possible resonance in $\mathcal I_{\mathbf{\Theta},L_0}$ later.

\smallskip

\textbf{Proof of \eqref{eq-modi-part1}.}  When
$N\ll M^{\frac3k}$, the integrand in \eqref{eq-modi-part1}
vanishes, so we may assume $M^{\frac3k}\les N$. It suffices to
show the following bound
\begin{align*}
\Big|\mathcal I_{{\mathbf{\Theta}},L_0}(s,\xi)\Big|\les\ve_{1}^{3}M^{-(1+\de_{0})}\bra{N}^{-10}.
\end{align*}
We further decompose $\mathcal I_{{\mathbf{\Theta}},L_0}$ dyadically in $\sigma$ variable as follows:
\begin{align*}
\mathcal I_{{\mathbf{\Theta}},L_0}(s,\xi)=ic_{1}\sum_{L_{1}\le L_{0}+10}\mathcal J_{L_{1}}(s,\xi),
\end{align*}
where
\begin{align*}
 \mathcal J_{L_{1}}(s,\xi)& =\iint_{\R^{3+3}}e^{is\phasep\freq}\Pi_{\thez}(\xi)|\eta|^{-2}\rho_{L_{1}}(\sigma)\rho_{\le L_{0}}(\eta)\wh{f_{\theta_1}}(s,\xi+\eta)\\
 & \qquad\qquad\qquad\qquad\qquad\qquad\times\bigbra{\wh{f_{\theta_3}}(s,\xi+\eta+\sigma),\wh{f_{\theta_2}}(s,\xi+\sigma)}d\eta d\sigma.
\end{align*}
Then, by a priori assumption \eqref{assumption-apriori}, we estimate
\begin{align*}
 & \left|\mathcal J_{L_{1}}(s,\xi)\right|\\
 & \;\;\les L_{1}^{-2}\bra{\xi}^{-10}\iint_{\R^{3+3}}\left|\bra{N}^{10}\Pi_{\thez}(\xi)\rho_{L_{1}}(\sigma)\rho_{\le L_{0}}(\eta)\wh{f_{\theta_1}}(s,\xi+\eta)\right.\\
 & \left.\qquad\qquad\qquad\qquad\qquad\qquad\qquad\times\bigbra{\wh{\psi_{\theta_3}}(s,\xi+\eta+\sigma),\wh{\psi_{\theta_2}}(s,\xi+\sigma)}\right|d\eta d\sigma\\
 & \;\;\les L_{1}^{-2}\bra{N}^{-10}L_{1}^{3}\|\bra{\xi}^{10}\wh{f_{\theta_1}}\|_{L_{\xi}^{\infty}}\|\psi_{\theta_2}\|_{H^{10}}\|\psi_{\theta_3}\|_{H^{10}}\\
 & \;\;\les L_{1}M^{2\de_{0}}\bra{N}^{-10}\ve_{1}^{3}.
\end{align*}
On the other hand, by H\"older inequality, we get
\begin{align*}
\left|\mathcal J_{L_{1}}(s,\xi)\right|\;\;\les L_{1}^{-\frac{1}{2}}N^{-k}\prod_{j=1}^{3}\|f_{\thej}\|_{H^{k}}\les L_{1}^{-\frac{1}{2}}M^{-3}M^{3\de_{0}}\ve_{1}^{3}.
\end{align*}
Above two estimates induce that
\begin{align*}
 &\sum_{L_{1}\le L_{0}+10}\Big|\mathcal J_{L_{1}}(s,\xi)\Big|\\
 & \les \sum_{L_{1}\le M^{-(1+3\de_{0})}}L_{1}M^{2\de_{0}}\bra{N_0}^{-10}\ve_{1}^{3}+\sum_{L_{1}>M^{-(1+3\de_{0})}}L_{1}^{-\frac{1}{2}}M^{-3+3\de_{0}}\ve_{1}^{3}\\
 & \les M^{-(1+\de_{0})}\bra{N_0}^{-10}\ve_{1}^{3}.
\end{align*}
This completes the proof of \eqref{eq-modi-part1}.

\smallskip

\textbf{Proof of \eqref{eq-modi-part2}.} Due to the cut-off $\rho(s^{-\frac3k}\xi)$,
we may assume $N_0\le M^{\frac3k}$. It suffices to bound the
integrand by 
\begin{align*}
\Big|\mathcal I_{{\mathbf{\Xi}},L_0}(s,\xi)-i\partial_{s}B(s,\thez\xi)\wh{f_{\thez}}(s,\xi)\Big| & \les\ve_{1}^{3}M^{-1-\de_{0}}\bra{N_0}^{-10}.
\end{align*}
for $\mathbf{\Xi} = (\thez, \thez, \thet,\thet)$.
%We use the cancellation effect of  $\partial_s B(s,\thez\xi)$, only when
%\[
%{\mathbf{\Theta}}=(\thez,\thez,\thet,\thet)=:{\mathbf{\Theta}}_{1}.
%\]
%We focus on this case. %         decompose the sign relation as follows:
%\begin{align*}
%\sum_{\thej=\pm,j=1,2,3} I_{{\mathbf{\Theta}},0}(s,\xi)=\sum_{\begin{subarray}{l}j=1,\cdot,4\\ \thet=\pm\end{subarray}}I_{{\mathbf{\Theta}}_j,0}(s,\xi)
%\end{align*}
%where
%\begin{align*}
%{\mathbf{\Theta}}_{1} & =\left(\thez,\thez,\thet,\thet\right),\\
%{\mathbf{\Theta}}_{2} & =\left(\thez,-\thez,\thet,\thet\right),\\
%{\mathbf{\Theta}}_{3} & =\left(\thez,\thez,\thet,-\thet\right),\\
%{\mathbf{\Theta}}_{4} & =\left(\thez,-\thez,\thet,-\thet\right).
%\end{align*}
Let us observe that
\begin{align*}
{p}_{\mathbf{\Xi}}\freq & =\thez\Big(\braxi-\bra{\xi+\eta}\Big) - \thet\Big(\bra{\xi+\sigma}-\bra{\xi+\eta+\sigma}\Big)\\
 & =\left(-\thez\frac{|\eta|^{2}+2\eta\cdot\xi}{\braxi+\bra{\xi+\eta}}-\thet\frac{-|\eta|^{2}-2\eta\cdot(\xi+\sigma)}{\bra{\xi+\sigma}+\bra{\xi+\eta+\sigma}}\right)\\
 & =-\eta\cdot\left(\thez\frac{\xi}{\braxi}-\thet\frac{\xi+\sigma}{\bra{\xi+\sigma}}\right)+O\left(|\eta|^{2}\right)\\
 & =:{q}_{(\thez,\thet)}(\xi,\eta,\sigma)+O\left(|\eta|^{2}\right).
\end{align*}
We now set
\begin{align*}
 & \mathcal I_{(\thez,\thet)}(s,\xi)\\
 & =ic_{1}\iint_{\R^{3+3}}e^{is{q}_{(\thez,\thet)}\freq}\Pi_{\thez}(\xi)\rho_{\le L_{0}}(\eta)|\eta|^{-2}\\
 & \qquad\qquad\qquad\qquad\times\wh{f_{\thez}}(\xi+\eta)\bigbra{\wh{f_{\thet}}(\xi+\eta+\sigma),\wh{f_{\thet}}(\xi+\sigma)}d\eta d\sigma.
\end{align*}
%where
%\[
%\mathbf{q}_{(\thez,\thet)}(\xi,\eta,\sigma):=\eta\cdot\left(\thez\frac{\xi}{\braxi} - \thet\frac{\xi+\sigma}{\bra{\xi+\sigma}}\right),
%\]
Then we estimate
\begin{align*}
 & \left|\mathcal I_{{\mathbf{\Xi}},L_0}(s,\xi)-\mathcal I_{(\thez,\thet)}(s,\xi)\right|\\
 & \les\iint_{\R^{3+3}}s\left|{p}_{{\mathbf{\Xi}}}(\xi,\eta,\sigma)-{q}_{(\thez,\thet)}(\xi,\eta,\sigma)\right|\\
 & \qquad\qquad\times\left|\Pi_{\thez}(\xi)\rho_{\le L_{0}}(\eta)|\eta|^{-2}\wh{f_{\thez}}(\xi+\eta)\bigbra{\wh{f_{\thet}}(\xi+\eta+\sigma),\wh{f_{\thet}}(\xi+\sigma)}\right|d\eta d\sigma\\
 & \les M\iint_{\R^{3+3}}\left|\Pi_{\thez}(\xi)\rho_{\le L_{0}}(\eta)\wh{f_{\thez}}(\xi+\eta)\bigbra{\wh{f_{\thet}}(\xi+\eta+\sigma),\wh{f_{\thet}}(\xi+\sigma)}\right|d\eta d\sigma\\
 & \les ML_{0}^{3}\normo{f_{\thet}}_{L_{x}^{2}}^{2}\normo{\wh{f_{\thez}}}_{L_{\xi}^{\infty}}\\
 & \les M^{\left(-\frac{5}{4}+\frac{1}{10}+2\de_{0}\right)}\ve_{1}^{3},
\end{align*}
where we used \eqref{eq-lzero} in the last inequality. Next, we approximate $\mathcal I_{(\thez,\thet)}(s,\xi)$ by $\wt{\mathcal I_{(\thez,\thet)}}(s,\xi)$ which is defined by
\begin{align*}
\wt{\mathcal I_{(\thez,\thet)}}(s,\xi)&:=ic_{1}\int_{\R^{3+3}}e^{is{q}_{(\thez,\thet)}\freq}\Pi_{\thez}(\xi)\rho_{\le L_{0}}(\eta)|\eta|^{-2}\\
&\hspace{5cm}\times\wh{f_{\thez}}(\xi)\left|\wh{f_{\thet}}(\xi+\sigma)\right|^{2}d\eta d\sigma.
\end{align*}
Set $R=L_0^{-1}$. Then we estimate for $\theta=\theta_0$ or $\theta_2$,
\begin{align*}
 & \left|\wh{f_{\theta}}(\zeta+\eta)-\wh{f_{\theta}}(\zeta)\right|\\
 & \les\left|\wh{\rho_{>R}f_{\theta}}(\zeta+\eta)-\wh{\rho_{>R}f_{\theta}}(\zeta)\right|+\left|\wh{\rho_{\le R}f_{\theta}}(\zeta+\eta)-\wh{\rho_{\le R}f_{\theta}}(\zeta)\right|\\
 & \les\normo{\wh{\rho_{>R}f_{\theta}}}_{L_{\xi}^{\infty}}+L_{0}\normo{\nabla_{\xi}\wh{\rho_{\le R}f_{\theta}}}_{L_{\xi}^{\infty}}\\
 & \les R^{-\frac{1}{2}}\normo{\bra{x}^{2}f_{\theta}}_{L_{x}^{2}}+R^{\frac{1}{2}}L_{0}\normo{\bra{x}^{2}f_{\theta}}_{L_{x}^{2}}\\
 & \les L_{0}^{\frac{1}{2}}M^{2\de_{0}},
\end{align*}
where the spatial cut-off functions $\rho_{\le R}(x)$ and $\rho_{>R}(x)$ are introduced. From this and \eqref{eq-lzero},
we have
\begin{align*}
 & \abs{\mathcal I_{(\thez,\thet)}(s,\xi)-\wt{\mathcal I_{(\thez,\thet)}}(s,\xi)}\\
 & \les\int_{\R^{3+3}}\rho_{\le L_{0}}(\eta)|\eta|^{-2}\\
 & \qquad\quad\times\abs{\wh{f_{\thez}}(\xi+\eta)\bigbra{\wh{f_{\thet}}(\xi+\eta+\sigma),\wh{f_{\thet}}(\xi+\sigma)}-\wh{f_{\thez}}(\xi)\left|\wh{f_{\thet}}(\xi+\sigma)\right|^{2}}d\eta d\sigma\\
 & \les L_{0}^{\frac{3}{2}}M^{2\de_{0}}\ve_{1}^{3} \les M^{(-\frac{9}{8}+\frac{1}{20}+2\de_{0})}\ve_{1}^{3}.
\end{align*}
To conclude the proof, it remains to show 
\begin{align*}
\left|\sum_{\thet\in \{\pm \}}\wt{\mathcal I_{(\thez,\thet)}}(s,\xi)-i\partial_{s}B(s,\thez\xi)\wh{f_{\thez}}(s,\xi)\right|\les M^{-(1+\de_{0})}\bra{N_0}^{-10}\ve_{1}^{3}. 
\end{align*}
%for $\thez=\pm$. Since the definition of $B(s,\thez\xi_{0})$ has cancellation of $\wt{I_{(\thez,\thet)}}$ with both $\thet=\pm$, we estimate the cases $\thet=\pm$ at once. 
Setting 
$$\zeta:=\left(\thez\frac{\xi}{\bra{\xi}}-\thet\frac{\sigma}{\bra{\sigma}}\right),$$
we estimate 
\begin{align}\begin{aligned}\label{es}
 & \left|\sum_{\thet \in \{\pm\}}\wt{\mathcal I_{(\thez,\thet)}}(s,\xi)-i\partial_{s}B(s,\thez\xi)\wh{f_{\thez}}(s,\xi)\right|\\
 & \les\abs{\wh{f_{\thez}}(\xi)}\left|\iint_{\R^{3+3}}e^{is\eta\cdot\zeta}\rho_{\le L_{0}}(\eta)|\eta|^{-2}\abs{\wh{f_{\thet}}(\sigma)}^{2}d\eta d\sigma-\frac{(2\pi)^{3}}{s}\int_{\R^{3}}|\zeta|^{-1}\abs{\wh{f_{\thet}}(\sigma)}^{2}d\sigma\right|\\
 & \les\lim_{A\to\infty}\abs{\wh{f_{\thez}}(\xi)}\left|\int_{\R^{3}}\left|\int_{\R^{3}}e^{is\eta\cdot\zeta}|\eta|^{-2}\left(\rho_{\le L_{0}}(\eta)-\rho_{\le A}(\eta)\right)d\eta\right|\abs{\wh{f_{\thet}}(\sigma)}^{2}d\sigma\right|\\
 & \les\bra{N}^{-10}M^{(-\frac{5}{4}-2\de_{0})}\ve_{1}\left|\int_{\R^{3}}|\zeta|^{-2}\abs{\wh{f_{\thet}}(\sigma)}^{2}d\sigma\right|.
\end{aligned}\end{align}
Here we used the formula 
\[
\frac{(2\pi)^{3}}{s|\zeta|}=\lim_{A\to\infty}\int_{\R^3} e^{is\zeta\cdot\eta}\frac{1}{|\eta|^{2}}\rho_{\le A}(\eta)d\eta,
\]
which gives for $L_{0}\ll A$,
\begin{align*}
 & \left|\int_{\R^{3}}e^{is\eta\cdot\zeta}|\eta|^{-2}\left(\rho_{\le L_{0}}(\eta)-\rho_{\le A}(\eta)\right)d\eta\right|\\
 & =|s\zeta|^{-2}\left|\int_{\R^{3}}\left(\nabla_{\eta}^{2}e^{is\eta\zeta}\right)|\eta|^{-2}\left(\rho_{\le L_{0}}(\eta)-\rho_{\le A}(\eta)\right)d\eta\right|\\
 & \les M^{-2}L_{0}^{-1}|\zeta|^{-2}.
\end{align*}
Since $|\zeta|\gtrsim\min\left\{ 1,|\sigma|,\frac{|\thez\xi- \thet\sigma|}{\bra{\sigma}^{3}}\right\} $, a priori assumption \eqref{assumption-apriori} yields that
\[
\left|\int_{\R^{3}}|\zeta|^{-2}\abs{\wh{f_{\thet}}(\sigma)}^{2}d\sigma\right|\les\ve_{1}^{2}.
\]
Plugging this bound into \eqref{es}, we complete the proof of \eqref{eq-modi-part2}.

\smallskip

\textbf{Proof of \eqref{eq-modi-part2-2}.} 
When we estimate the low frequency parts $|\xi|\le s^{\frac3k}$ with $\mathbf{\Theta}\neq\mathbf{\Xi}$, where $\theta_0=-\theta_1$ or $\theta_2=-\theta_3$, the cancellation effect from the oscillatory term $e^{iB(s,\theta_{0}\xi)}$ could be ignored.
Instead, the spinorial structure from $\Pi_{\theta_0}(D)\Pi_{\theta_1}(D)$ or $\Pi_{\theta_2}(D)\Pi_{\theta_3}(D)$ plays an essential role to bound the singularity.
For instance, if $\theta_0=-\theta_1$, we estimate using \eqref{decay-null} 
\begin{align*}
	& \left|\mathcal I_{{\mathbf{\Theta}},L_0}(s,\xi)\right|\\
	& \les\int_{\R^{3+3}}\Big|\Pi_{\thez}(\xi)\Pi_{-\thez}(\xi+\eta)\Big| \left|\rho_{\le L_{0}}(\eta) \right| |\eta|^{-2}\\
	& \qquad\qquad\qquad\times\left|\wh{f_{-\thez}}(\xi+\eta)\bigbra{\wh{f_{\theta_3}}(\xi+\eta+\sigma),\wh{f_{\thet}}(\xi+\sigma)}\right|d\eta d\sigma\\
	& \les L_{0}^{2}\normo{f_{\theta_3}}_{L_{x}^{2}}^{2}\normo{f_{\thet}}_{L_{x}^{2}}^{2}\normo{\wh{f_{-\thez}}}_{L_{\xi}^{\infty}}\\
	& \les M^{\left(-\frac{3}{2}+\frac{1}{15}+2\de_{0}\right)}\ve_{1}^{3}.
   \end{align*}
The other cases can be estimated similarly.

% The cases in \eqref{eq-modi-part2-2} are not affected from the oscillatory term $e^{iB(s,\theta_{0}\xi)}$ from its definition \eqref{modified-phase}.
% They are estimated without cancellation thanks to the spinorial null-structure between $\Pi_\thez(D)$ and $\Pi_\theo(D)$ or $\Pi_\thet(D)$ and $\Pi_\theth(D)$. Since all the remaining cases can be estimated in a similar manner, we present a sample proof of the
% case ${\mathbf{\Theta}}_{2}:=(\theta_{0},-\theta_{0},\theta_{2},\theta_{2})$.

% By Lemma \ref{smallness} , we obtain
% \begin{align*}
%  & \left|\mathcal I_{{\mathbf{\Theta}}_{2},L_0}(s,\xi)\right|\\
%  & \les\int_{\R^{3+3}}\Big|\Pi_{\thez}(\xi)\Pi_{-\thez}(\xi+\eta)\Big| \left|\rho_{\le L_{0}}(\eta) \right| |\eta|^{-2}\\
%  & \qquad\qquad\qquad\times\left|\wh{f_{-\thez}}(\xi+\eta)\bigbra{\wh{f_{\thet}}(\xi+\eta+\sigma),\wh{f_{\thet}}(\xi+\sigma)}\right|d\eta d\sigma\\
%  & \les L_{0}^{2}\normo{f_{\thet}}_{L_{x}^{2}}^{2}\normo{\wh{f_{-\thez}}}_{L_{\xi}^{\infty}}\\
%  & \les M^{\left(-\frac{3}{2}+\frac{1}{15}+2\de_{0}\right)}\ve_{1}^{3}.
% \end{align*}
% This finishes the proof of \eqref{eq-modi-part2-2}.

\smallskip

\textbf{Proof of \eqref{eq-modi-part3}.} We localize the frequencies of $f_{\theta_i}$ as follows:
\begin{align*}
\mathcal I_{{\mathbf{\Theta}},L}(s,\xi) & =ic_{1}\sum_{\textbf{N}\in (2^{\Z})^3}\mathcal I_{{\mathbf{\Theta}},L}^{\mathbf{N}}(s,\xi),\\
\mathcal I_{{\mathbf{\Theta}},L}^{\mathbf{N}}(s,\xi) & =\iint_{\R^{3+3}}e^{is\phasep\freq}\Pi_{\thez}(\xi)|\eta|^{-2}\wh{f_{\theo,N_{1}}}(s,\xi+\eta)\\
 & \qquad\qquad\qquad\qquad\qquad\times\bigbra{\wh{f_{\theth,N_{3}}}(s,\xi+\eta+\sigma),\wh{f_{\thet,N_{2}}}(s,\xi+\sigma)}d\eta d\sigma,
\end{align*}
where $f_{\thej,N_{j}}=P_{N_{j}}f_{\thej}$, $\textbf{N}=(N_{1},N_{2},N_{3})$
is 3-tuple of dyadic numbers, and
\[
{p}_{{\mathbf{\Theta}} }(\xi,\eta,\sigma) = \thez\braxi-\theo\langle\xi+\eta\rangle-\thet\langle\xi+\sigma\rangle+\theth\bra{\xi+\eta+
	\sigma}.
\]
Then we have only to prove
\begin{align}
\sum_{L>L_{0},\textbf{N}\in(2^{\Z})^3} \left| \mathcal I_{{\mathbf{\Theta}},L}^{\mathbf{N}}(s,\xi)\right|  \les\ve_{1}^{3}M^{-1-\de_{0}}\bra{N}^{-10}.\label{eq-goal-high}
\end{align}
By H\"older inequality, we readily have
\[
\left|\mathcal I_{{\mathbf{\Theta}},L}^{\mathbf{N}}(s,\xi)\right|\les L^{-\frac{1}{2}}\prod_{j=1}^{3}\normo{\wh{f_{\thej,N_{j}}}(s)}_{L^{2}},
\]
and the a priori assumption implies the following bounds 
\[
\normo{\wh{f_{\thej,N_{j}}}(s)}_{L^{2}}\les\min{\left(N_{j}^{\frac{3}{2}}\bra{N_{j}}^{-10},\bra{N_{j}}^{-k}M^{\de_{0}}\right)}\ve_{1}\;\;\mbox{ for }\;\;j=1,2,3.
\]
The last two estimates above are sufficient to show \eqref{eq-goal-high} for the sum over those indices $\mathbf{N}$ satisfying $N_{\max}\ge M^{\frac3k}$
or $N_{\min}\le M^{-1}$. Here we denoted $\max{(N_{1},N_{2},N_{3})}$
and $\min{(N_{1},N_{2},N_{3})}$ by $N_{\max}$ and $N_{\min}$ ,
respectively. Then, it remains to bound the sum in  \eqref{eq-goal-high} over $L>L_0$ and $\mathbf{N}$ satisfying
\begin{align*}
M^{-1}\le N_{1},N_{2},N_{3}\le M^{\frac3k}.
\end{align*}
%From the support condition we immediately have 
% $N\les N_{\max}\le M^{\frac3k}$. 
 Let us further localize $\sigma$ variable by
$L'\in2^{\Z}$ to write
\begin{align*}
\mathcal I_{{\mathbf{\Theta}},L}^{\mathbf{N}}(s,\xi)
&=\sum_{L' \in 2^{\Z}}\mathcal I_{{\mathbf{\Theta}},\textbf{L}}^{\mathbf{N}}(s,\xi), \\ 	
\mathcal I_{{\mathbf{\Theta}},\textbf{L}}^{\mathbf{N}}(s,\xi) & =\iint_{\R^{3+3}}e^{is\phasep\freq}\Pi_{\thez}(\xi)|\eta|^{-2}\rho_{L}(\eta)\rho_{L'}(\sigma)\wh{f_{\theo,N_{1}}}(s,\xi+\eta)\\
 & \qquad\qquad\qquad\qquad\qquad\times\bigbra{\wh{f_{\theth,N_{3}}}(s,\xi+\eta+\sigma),\wh{f_{\thet,N_{2}}}(s,\xi+\sigma)}d\eta d\sigma,
\end{align*}
where $\textbf{L}=(L,L')$. 
Then we suffice to prove that 
\begin{align*}
	\sum_{(\textbf{N},\textbf{L}) \in \mathcal{A}} 
\left| \mathcal I_{{\mathbf{\Theta}},\mathbf{L}}^{\mathbf{N}}(s,\xi) \right| \les\ve_{1}^{3}M^{-1-\de_{0}}\bra{N}^{-10},
\end{align*}
where the summation runs over 
\begin{align}
\mathcal{A}=\left\{ (\textbf{N},\textbf{L})\in (2^{\Z})^3 \times (2^{\Z})^2 :M^{-1}\le N_{1},N_{2},N_{3}\le M^{\frac3k},\;L_{0}\le L \right\} .\label{eq-condi-piece}
\end{align}
We observe that the other dyadic numbers also satisfy $N,L,L'\les M^{\frac3k}$.
We divide the summation over $\mathcal{A}$ into 
\begin{align*}
\sum_{(\textbf{N},\textbf{L}) \in \mathcal{A}}
=\sum_{(\textbf{N},\textbf{L}) \in \mathcal{A}_1}
+\sum_{(\textbf{N},\textbf{L}) \in \mathcal{A}_2}
+\sum_{(\textbf{N},\textbf{L}) \in \mathcal{A}_3}
+\sum_{(\textbf{N},\textbf{L}) \in \mathcal{A}_4},
\end{align*}
where 
\begin{align*}
\textbf{Case~1: } \mathcal{A}_1 &= \mathcal{A} \cap \{  L'\le L^{-\frac{1}{3}}M^{-\frac13 - \frac1k}  \},  \\ 
\textbf{Case~2: }\mathcal{A}_2 &= \mathcal{A} \cap \{ L'\ge L^{-\frac{1}{3}}M^{-\frac13 - \frac1k} \;\; \mbox{and}\;\; \max{(N_{1},N_{3})}\les L \}, \\ 
\textbf{Case~3: }\mathcal{A}_3 &= \mathcal{A} \cap \{ L'\ge L^{-\frac{1}{3}}M^{-\frac13 - \frac1k},\;\max(N_{1},N_{3})\gg L, \;\;\mbox{and}\;\; N_{1}\nsim N_{3} \}, \\ 
\textbf{Case~4: }\mathcal{A}_4 &= \mathcal{A} \cap \{ L'\ge L^{-\frac{1}{3}}M^{-\frac13 - \frac1k},\;\max(N_{1},N_{3})\gg L, \;\;\mbox{and}\;\; N_{1}\sim N_{3} \}.
\end{align*}

% We  now consider  4 cases:
%  \begin{align*}
% \textbf{Case a. } &L'\le L^{-\frac{1}{3}}M^{-\frac13 - \frac1k},\\
% \textbf{Case b. } &L'\ge L^{-\frac{1}{3}}M^{-\frac13 - \frac1k} \;\; \mbox{and}\;\; \max{(N_{1},N_{3})}\le L,\\
% \textbf{Case c. } &L'\ge L^{-\frac{1}{3}}M^{-\frac13 - \frac1k},\;\max(N_{1},N_{3})\ge L, \;\;\mbox{and}\;\; N_{1}\nsim N_{3},\\
% \textbf{Case d. } &L'\ge L^{-\frac{1}{3}}M^{-\frac13 - \frac1k},\;\max(N_{1},N_{3})\ge L, \;\;\mbox{and}\;\; N_{1}\sim N_{3}.
% \end{align*}

\global\long\def\nl{{\textbf{N},\textbf{L}}}%

\noindent In the estimates for the first three cases, the signs $\mathbf{\Theta}$ will play no role.

\smallskip

\noindent \textbf{Estimates for Case~1.} The low frequency part with respect to $\sigma$ variable can be estimated by the trivial estimates
\begin{align*}
\sum_{(\nl)\in\mathcal{A}_1}|\mathcal I_{{\mathbf{\Theta}},\textbf{L}}^{\mathbf{N}}(s,\xi)| & \les\sum_{(\nl)\in\mathcal{A}_1}L(L')^{3}\bra{N_{\max}}^{-10}\prod_{j=1}^{3}\normo{\bra{\xi}^{10}\wh{f_{\thej,N_{j}}}(s,\xi)}_{L_{\xi}^{\infty}}\\
 & \les\ve_{1}^{3}M^{-(1+\de_{0})}\bra{N}^{-10}.
\end{align*}

\smallskip

\noindent \textbf{Estimates for Case~2.} 
We apply Lemma \ref{kernel} to $\mathcal I_{{\mathbf{\Theta}},{\bf L}}^{{\bf N}}$ with
\[
m_{\nl}(\xi,\eta,\sigma):=|\eta|^{-2}\rho_{L}(\eta)\rho_{L'}(\sigma)\rho_{N_{1}}(\xi+\eta)\rho_{N_{3}}(\xi+\eta+\sigma)
\]
and $\|m_{\nl}(\xi)\|_{\text{CM}} \les L^{-2}$ to estimate
\begin{align*}
\sum_{(\nl)\in\mathcal{A}_2}|\mathcal I_{{\mathbf{\Theta}},\textbf{L}}^{\mathbf{N}}(s,\xi)| & \les\sum_{(\nl)\in\mathcal{A}_2}L^{-2}\|\wh{f_{\theo,N_{1}}}\|_{L^{2}}\|\psi_{\thet,N_2}\|_{L^{\infty}}\|\wh{f_{\theth,N_{3}}}\|_{L^{2}}\\
 & \les\ve_{1}^{3}M^{-\frac{3}{2}}\sum_{(\nl)\in\mathcal{A}_2}L^{-2}N_{1}^{\frac32}\bra{N_{1}}^{-10}\bra{N_2}^{-2}N_{3}^{\frac32}\bra{N_{3}}^{-10}\\
 & \les\ve_{1}^{3}M^{-\frac{3}{2}+\delta_{0}}\bra{N}^{-10}.
\end{align*}

\smallskip
\noindent \textbf{Estimates for Case~3.}
In this case we extract an extra time decay via the normal form approach. To achieve this, using the relation
\[
e^{is\phase}=-i\frac{1}{s}\frac{(\nabla_{\eta}\phase)\cdot\nabla_{\eta}e^{is\phase}}{|\nabla_{\eta}\phase|^{2}},
\]
we perform an integration by parts in $\eta$ to obtain  
\begin{align*}
\mathcal I_{{\mathbf{\Theta}},\textbf{L}}^{\mathbf{N}}(s,\xi) 
&=\mathcal J_{1}(s,\xi) 
+\mathcal J_{2}(s,\xi) ,\\
\mathcal J_{1}(s,\xi) & :=\frac{i}{s}\Pi_{\theta_0}(\xi)\iint_{\R^{3+3}}e^{is\phasep\freq}\textbf{m}_{1}(\xi,\eta,\sigma) \\ 
&\times \nabla_{\eta}\left( \wh{f_{\theo,N_{1}}}(s,\xi+\eta) \bigbra{\wh{f_{\theth,N_{3}}}(s,\xi+\eta+\sigma),\wh{f_{\thet,N_{2}}}(s,\xi+\sigma)} \right) d\eta d\sigma,\\
\mathcal J_{2}(s,\xi) & :=\frac{i}{s}\Pi_{\theta_0}(\xi)\iint_{\R^{3+3}}e^{is\phasep\freq}\textbf{m}_{2}(\xi,\eta,\sigma)\\
 &\times\wh{f_{\theo,N_{1}}}(s,\xi+\eta)\bigbra{\wh{f_{\theth,N_{3}}}(s,\xi+\eta+\sigma),\wh{f_{\thet,N_{2}}}(s,\xi+\sigma)}d\eta d\sigma,
\end{align*}
where
\begin{align}\label{eq:multi-m1}
	\begin{aligned} 
\textbf{m}_{1}(\xi,\eta,\sigma) & =\frac{\nabla_{\eta}\phase\freq}{|\nabla_{\eta}\phase\freq|^{2}}|\eta|^{-2}\rho_{L}(\eta)\rho_{L'}(\sigma)\rho_{N_{1}}(\xi+\eta)\rho_{N_{3}}(\xi+\eta+\sigma),\\
\mathbf{m}_{2}(\xi,\eta,\sigma) & =\nabla_{\eta}\left(\frac{\nabla_{\eta}\phase\freq}{|\nabla_{\eta}\phase\freq|^{2}}|\eta|^{-2}\rho_{L}(\eta)\rho_{L'}(\sigma)\rho_{N_{1}}(\xi+\eta)\rho_{N_{3}}(\xi+\eta+\sigma)\right).
\end{aligned}
\end{align}
We first claim that for fixed $\xi\in \R^3$, 
\begin{align}\begin{aligned}\label{cm m1}
	\| \mathbf{m}_1 \|_{\text{CM}}&:= \left\Vert \iint_{\mathbb{R}^{3+3}}\mathbf{m}_1(\xi,\eta,\sigma)e^{ix\cdot\eta}e^{iy\cdot\sigma} d\eta d\sigma\right\Vert _{L_{x,y}^{1}(\R^{3}\times\R^3)}  \\ 
	&\les \max(N_1,N_3)^{-1} \min(\langle N_1\rangle,\langle N_3\rangle)^{15}L^{-2}.
\end{aligned}\end{align}
Indeed, by changing variables, we have 
\begin{align*}
\| \mathbf{m}_1 \|_{\text{CM}}
=\| \widetilde{ \mathbf{m}_1 }  \|_{\text{CM}}, \end{align*}
where \begin{align*}
	\widetilde{ \mathbf{m}_1 }(\xi,\eta,\sigma)
&= \frac{q_{(\theta_1,\theta_3)}(\eta,\sigma)}{|q_{(\theta_1,\theta_3)}(\eta,\sigma)|^2}|\xi-\eta|^{-2}\rho_{L}(\xi-\eta)\rho_{L'}(\sigma-\eta)\rho_{N_1}(\eta)\rho_{N_3}(\sigma), \\ 
q_{(\theta_1,\theta_3)}(\eta,\sigma)&=\theta_1\frac{\eta}{\langle \eta\rangle}-\theta_3\frac{\sigma}{\langle \sigma\rangle}.
\end{align*}
Using the inequalities
\begin{align}\label{phase lowbound}
\left| \frac{\eta}{\langle \eta\rangle}\pm \frac{\sigma}{\langle \sigma\rangle} \right|  
\gtrsim \frac{ \left||\eta|-|\sigma|\right|}{\min(\langle \eta\rangle,\langle \sigma\rangle)\max(\langle \eta\rangle,\langle \sigma\rangle)^2 }, 
\end{align}
one can show that $\widetilde{ \mathbf{m}_1 }$ satisfies the following differential inequalities 
\begin{align*}
|\nabla_\eta^m\nabla_\sigma^n &\widetilde{ \mathbf{m}_1 }(\xi,\eta,\sigma)| \les C(\mathbf{N},\mathbf{L})L^{-m}N_3^{-n}\rho_{L}(\eta-\xi)\rho_{N_3}(\sigma),\\  
C(\mathbf{N},\mathbf{L})=&\begin{cases}
L^{-2}\min(N_1,N_3)^{2+2(m+n)} & \text{ if } \min(N_1,N_3) \ge 1, \\ 
L^{-2}\max(N_1,N_3)^{-1} \max( \langle N_1\rangle, \langle N_3\rangle)  & \text{ if } \min(N_1,N_3) \le 1,
\end{cases}
\end{align*}
which implies \eqref{cm m1} by a straightforward computation.
% Since $N_{1}\nsim N_{3},$ we have
% \begin{align*}
% |\nabla_\eta p_{\mathbf{\Theta}}(\xi,\eta,\sigma)| = \left| \frac{\xi + \eta}{\bra{\xi+\eta}}  \pm \frac{\xi + \eta+\sigma}{\bra{\xi+\eta+\sigma}}\right| &\gtrsim   \frac{L'}{\max\left(\bra{N_1}^3,\bra{N_3}^3\right)},
% \end{align*}
% and $L' \sim \max\left( N_1, N_3\right)$. These yield that
% \begin{align}
% 	\begin{aligned}
% &\nabla_{\eta}^n\nabla_{\sigma}^m \left(\frac{\nabla_{\eta}\phase\freq}{|\nabla_{\eta}\phase\freq|^{2}}\right)\\
% & \les \max\left(\bra{N_1}^3,\bra{N_3}^3\right)\min(N_1^{-(n+1)},N_3^{-(n+1)}) \min(N_1^{-(m)},N_3^{-(m)})\\
% &\les \max\left(\bra{N_1}^3,\bra{N_3}^3\right) \min(N_1^{-1},N_3^{-1})L^{-n}(L')^{-m}.
% \end{aligned}\label{eq-deri-phase}
% \end{align}
% Since $\max(N_1,N_3) \ge L$, we obtain
% \[
% \nabla_\eta \Big[\rho_{L}(\eta)\rho_{L'}(\sigma)\rho_{N_{1}}(\xi+\eta)\rho_{N_{3}}(\xi+\eta+\sigma)\Big] \les L^{-1} + N_1^{-1}  + N_3^{-1} \les L^{-1}.
% \]
% By repeating this derivation similarly, we see that
% \[
% \nabla_\eta^n \nabla_\sigma^m \Big[\rho_{L}(\eta)\rho_{L'}(\sigma)\rho_{N_{1}}(\xi+\eta)\rho_{N_{3}}(\xi+\eta+\sigma)\Big] \les L^{-n} (L')^{-m}.
% \]
% Then we estimate
% \begin{align}
% 	\begin{aligned}\label{eq-multipl-bound}
% &\left\| \iint_{\R^{3+3}} e^{ix\cdot\eta} e^{iy \cdot \sigma}\textbf{m}_{1}(\xi,\eta,\sigma) \,d\eta d\sigma\right\|_{L_{x,y}^1}\\
% & \hspace{3cm}\les L^{-2}\max\left(\bra{N_1}^3,\bra{N_3}^3\right)\min{(N_{1}^{-1},N_{3}^{-1})}.
% \end{aligned}
% \end{align}
Hence, applying Lemma \ref{kernel} with \eqref{cm m1} to $\mathcal{J}_1$, we estimate 
\begin{align*}
 & \sum_{(\nl)\in\mathcal{A}_3}  |\mathcal J_{1}(s,\xi)| \\ 
&\les M^{-1+\frac{30}{k}}\bra{N}^{-10} \sum_{(\nl)\in\mathcal{A}_3}   \max(N_1,N_3)^{-1} \min(\langle N_1\rangle,\langle N_3\rangle)^{15}L^{-2} \\ 
 &\qquad \times 
 \left( \normo{\nabla \wh{f_{\theo,N_{1}}}}_{L^{2}} \normo{\wh{f_{\theth,N_{3}}}}_{L^{2}} 
 +
 \normo{\wh{f_{\theo,N_{1}}}}_{L^{2}}\normo{\nabla  \wh{ f_{\theth,N_{3}}}}_{L^{2}} 
 \right) \|\psi_{\thet,N_2}\|_{L^{\infty}} \\ 
 &  \les\ve_{1}^{3}M^{-1+\frac{30}k} \bra{N}^{-10} M^{\frac{3}{2}-\frac{1}{15}}M^{\de_{0}}M^{-\frac{3}{2}}M^{\frac{12}k} \les\ve_{1}^{3}M^{-(1+\de_{0})}\bra{N}^{-10},
\end{align*}
where we used $M^{-\frac{3}{4}+\frac{1}{30}}\sim L_{0}\le L$.
Next, in oder to estimate $\mathcal J_{2}(s,\xi)$ we perform an integration by parts
in $\eta$ once again to obtain
\begin{align*}
\mathcal J_{2}(s,\xi) & =\mathcal J_{3}(s,\xi)+\mathcal J_{4}(s,\xi),\\
\mathcal J_{3}(s,\xi) & :=-\frac{1}{s^{2}}\Pi_{\theta_0}(\xi)
\iint_{\R^{3+3}}e^{is\phasep\freq}
\textbf{m}_{3}(\xi,\eta,\sigma) \\ 
&\qquad \times\nabla_{\eta}\left( 
\wh{f_{\theo,N_{1}}}(s,\xi+\eta) \bigbra{\wh{f_{\theth,N_{3}}}(s,\xi+\eta+\sigma),\wh{f_{\thet,N_{2}}}(s,\xi+\sigma)}\right)d\eta d\sigma, \\
\mathcal J_{4}(s,\xi) & :=-\frac{1}{s^{2}}\Pi_{\theta_0}(\xi)\iint_{\R^{3+3}}e^{is\phasep\freq}\textbf{m}_{4}(\xi,\eta,\sigma) \\ 
&\qquad\times\wh{f_{\theo,N_{1}}}(s,\xi+\eta)
\bigbra{\wh{f_{\theth,N_{3}}}(s,\xi+\eta+\sigma),\wh{f_{\thet,N_{2}}}(s,\xi+\sigma)}d\eta d\sigma,
\end{align*}
where
\begin{align}
	\begin{aligned}\label{eq:multi-m21}
\textbf{m}_{3}(\xi,\eta,\sigma) & =\frac{\nabla_{\eta}\phase\freq}{|\nabla_{\eta}\phase\freq|^{2}}\textbf{m}_{2}\freq,\\
\textbf{m}_{4}(\xi,\eta,\sigma) & =\nabla_{\eta}\left(\frac{\nabla_{\eta}\phase\freq}{|\nabla_{\eta}\phase\freq|^{2}}\textbf{m}_{2}\freq\right).
\end{aligned}
\end{align}
An analogous computation as in the proof of \eqref{cm m1} yields that 
\begin{align*}
\| \mathbf{m}_3 \|_{\text{CM}}
\les \max(N_1,N_3)^{-2} \min(\langle N_1\rangle,\langle N_3\rangle)^{20}L^{-3}.
\end{align*}
Applying Lemma \ref{kernel} with the bound, we estimate
\begin{align*}
	& \sum_{(\nl)\in\mathcal{A}_3}|\mathcal J_{3}(s,\xi)|\\
	& \les M^{-2+\frac{30}k}\bra{N}^{-10}\sum_{(\nl)\in\mathcal{A}_3}L^{-3}\max({N_{1},N_{3}})^{-2}\min(\langle N_1\rangle,\langle N_3\rangle)^{20} \\ 
&\qquad \left( \normo{\nabla\wh{f_{\theo,N_{1}}}}_{L^{2}}\normo{\wh{f_{\theth,N_{3}}}}_{L^{2}} + \normo{\wh{f_{\theo,N_{1}}}}_{L^{2}}\normo{\nabla\wh{f_{\theth,N_{3}}}}_{L^{2}}  \right)\normo{\psi_{\thet,N_{2}}}_{L_{x}^{\infty}} \\ 
	&  \les\ve_{1}^{3}M^{-2+\frac{30}k} \bra{N}^{-10}M^{\de_{0}}M^{-\frac{3}{2}} M^{\frac{24}{k}}
\sum_{ L \ge M^{-\frac34 + \frac1{30}} } 
	L^{-3}
	\sum_{L' \ge L^{-\frac13}M^{-\frac13-\frac1k}}	(L')^{-\frac{1}{2}}\\
	& \les\ve_{1}^{3}M^{-\frac{7}{2}+ \frac{54}{k} +\de_0}M^{\frac{17}8 -\frac{17}{180}}M^{\frac{1}{6}+\frac1{2k}} \bra{N}^{-10}\les\ve_{1}^{3}M^{-(1+\de_{0})}\bra{N}^{-10}. 
\end{align*}
Next, from \eqref{phase lowbound}, one can find the pointiwse bound of $\mathbf{m}_4$
\begin{align*}
|\textbf{m}_{4}(\xi,\eta,\sigma)|\les \max(N_1,N_3)^{-2} \min(\langle N_1\rangle,\langle N_3\rangle)^{10}L^{-4}.
\end{align*}
Using this and H\"older inequality, we estimate 
\begin{align*}
 & \sum_{(\nl)\in\mathcal{A}_3}|\mathcal J_{4}(s,\xi)|\\
 & \les M^{-2+\frac{30}{k}}\bra{N}^{-10}\sum_{(\nl)\in\mathcal{A}_3}\max(N_1,N_3)^{-2} \min(\langle N_1\rangle,\langle N_3\rangle)^{10}L^{-1} \\ &\qquad\qquad\qquad\times\normo{\wh{f_{\theo,N_{1}}}}_{L^{2}}\normo{\wh{f_{\thet,N_{2}}}}_{L^{\infty}}\normo{\wh{f_{\theth,N_{3}}}}_{L^{2}}\\
 & \les\ve_{1}^{3}M^{-2+\frac{30}{k}+\de_0}\bra{N}^{-10}
 \sum_{L \ge M^{-\frac34 + \frac1{30}}} L^{-1}
 \sum_{L' \ge M^{-\frac25}} (L')^{-\frac{1}{2}}
% \\ & \les\ve_{1}^{3}M^{-2+\de_0}M^{\frac{3}{4}-\frac{1}{30}}M^{\frac{1}{5}}
 \les\ve_{1}^{3}M^{-(1+\de_{0})}\bra{N}^{-10},
\end{align*}
where we used
\begin{align*}
M^{-\frac{2}{5}}\les L',\qquad L'\sim|\sigma|\les\max(N_{1},N_{3}),
\end{align*}
which easily follows from the condition on dyadic pieces \eqref{eq-condi-piece}.

\smallskip
\noindent \textbf{Estimates for Case~4.}
As in \textbf{Case~3}, we perform an integration by parts to gain a time decay. The strategy depends on combinations of sign $\mathbf{\Theta}$. 
\begin{enumerate}
\item If $\theta_1=\theta_3$, we perform an integration by parts with respect to $\eta$ variable using the following 
\begin{align*}
	|\nabla_{\eta}p_{\mathbf\Theta}(\xi,\eta,\sigma)| = \left| \theta_1\left( \frac{\xi+\eta}{\langle \xi+\eta\rangle} - \frac{\xi+\eta+\sigma}{\langle \xi+\eta+\sigma\rangle} \right) \right| 
	\gtrsim L' \langle N_1\rangle^{-3}.
\end{align*}
\item If $\theta_1\neq\theta_3$, we perform an integration by parts with respect to $\sigma$ variable using the following 
\begin{align}\label{psigma}
	|\nabla_{\sigma}p_{\mathbf\Theta}(\xi,\eta,\sigma)| \gtrsim   
\left\{ \begin{aligned}
	&\left|\frac{\xi+\sigma}{\langle \xi+\sigma\rangle}- \frac{\xi+\eta+\sigma}{\langle \xi+\eta+\sigma\rangle} \right| \gtrsim L\langle N_2\rangle^{-3}, &&\text{ if } \theta_2=\theta_3, \\ 
	&\left|\frac{\xi+\sigma}{\langle \xi+\sigma\rangle} + \frac{\xi+\eta+\sigma}{\langle \xi+\eta+\sigma\rangle} \right| \gtrsim N_2\langle N_2\rangle^{-1}, &&\text{ if } \theta_2 \neq \theta_3. \\ 
\end{aligned}\right. 
   \end{align}
\end{enumerate}
Here we make an observation for \eqref{psigma} when $\thet \neq \theth$. If $L \sim N_2 \sim N_3$, $|\nabla_{\sigma} p_{\mathbf{\Theta}}|$ does not have lower bound. However, in this case, we only consider the case $L \ll N_3$ which makes the lower bound as in \eqref{psigma} when $\thet \neq \theth$.

% In this case various sign relations of $\mathbf{\Theta}$ induce both space-time {\em resonance} and  {\em non-resonance} of $p_{\mathbf{\Theta}}$. Since $N_1 \sim N_3$, the case $\theo =\theth$ gives the space {\em non-resonance} of $\nabla_{\eta} p_{\mathbf{\Theta}}$. Except for this case, we utilize the normal form approach exploiting the $\sigma$-direction {\em non-resonance}  and time {\em non-resonance} of $ p_{\mathbf{\Theta}}$. We first consider $\theo =\theth$. The main difference from the previous case is the fact that there is no lower bound
% of $\nabla_\eta p_{\mathbf{\Theta}}$ for   $L \sim N_1 \sim N_3 $. Hence we treat the case $L \sim N_1 \sim N_3$ without the  normal form approach.  Indeed, by applying Lemma \ref{kernel} to $\mathcal I_{\mathbf{\Theta,\textbf{L}}}^\textbf{N}$ directly, we readily obtain
% \begin{align}
% 	\begin{aligned}\label{eq:comparable}
% 	\sum_{\substack{(\nl) \in \mathcal N\\ L\sim N_1 \sim N_3}}	\left| \mathcal I_{\mathbf{\Theta},\textbf{L}}^{\textbf{N}}(s,\xi)  \right| &\les \sum_{\substack{(\nl) \in \mathcal N\\L\sim N_1 \sim N_3}}L^{-2} \normo{f_{\theo,N_1}}_{L_x^2} \normo{\psi_{\thet,N_2}}_{L_x^\infty}\normo{f_{\theth,N_3}}_{L_x^2}\\
% 	&\les \ve_1^3\sum_{\substack{(\nl) \in \mathcal N\\L\sim N_1\\ L^{-\frac13}M^{-\frac13 -\frac1n} \le L'}}M^{-\frac32}L^{-2}N_1^3 \bra{N_2}^{-2}\bra{N_1}^{-20}\\
% 	& \les \ve_1^3M^{-1+\de_0}\bra{N}^{-10}.
% 	\end{aligned}
% \end{align}
\noindent \textit{(1) Estimates when $\theta_1=\theta_3$.} We perform an integration
by parts twice in $\eta$ variable  to obtain 
\begin{align*}
\mathcal I_{{\mathbf{\Theta}},\textbf{L}}^{\textbf{N}}(s,\xi) & =\mathcal K_{1}(s,\xi)+\mathcal K_{2}(s,\xi)+\mathcal K_{3}(s,\xi),\\
\mathcal K_{1}(s,\xi) & :=-\frac{1}{s^{2}}\iint_{\R^{3+3}}e^{is\phasep\freq}\textbf{k}_{1}(\xi,\eta,\sigma)\nabla_{\eta}^{2}\left[\wh{f_{\theo,N_{1}}}(s,\xi+\eta)\right.\\
 & \qquad\qquad\qquad\qquad\qquad\left.\times\bigbra{\wh{f_{\theth,N_{3}}}(s,\xi+\eta+\sigma),\wh{f_{\thet,N_{2}}}(s,\xi+\sigma)}\right]d\eta d\sigma,\\
\mathcal K_{2}(s,\xi) & :=-\frac{2}{s^{2}}\iint_{\R^{3+3}}e^{is\phasep\freq}\textbf{k}_{2}(\xi,\eta,\sigma)\nabla_{\eta}\left[\wh{f_{\theo,N_{1}}}(s,\xi+\eta)\right.\\
 & \qquad\qquad\qquad\qquad\qquad\left.\times\bigbra{\wh{f_{\theth,N_{3}}}(s,\xi+\eta+\sigma),\wh{f_{\thet,N_{2}}}(s,\xi+\sigma)}\right]d\eta d\sigma,\\
\mathcal K_{3}(s,\xi) & :=-\frac{1}{s^{2}}\iint_{\R^{3+3}}e^{is\phasep\freq}\textbf{k}_{3}(\xi,\eta,\sigma)\wh{f_{\theo,N_{1}}}(s,\xi+\eta)\\
 & \qquad\qquad\qquad\qquad\qquad\times\bigbra{\wh{f_{\theth,N_{3}}}(s,\xi+\eta+\sigma),\wh{f_{\thet,N_{2}}}(s,\xi+\sigma)}d\eta d\sigma,
\end{align*}
where
\begin{align*}
\textbf{k}_{1}(\xi,\eta,\sigma) & :=\frac{\nabla_{\eta}\phase\freq}{|\nabla_{\eta}\phase\freq|^{2}}\textbf{m}_{1}(\xi,\eta,\sigma),\\
\textbf{k}_{2}(\xi,\eta,\sigma) & :=\frac{\nabla_{\eta}\phase\freq}{|\nabla_{\eta}\phase\freq|^{2}}\textbf{m}_{2}(\xi,\eta,\sigma),\\
\textbf{k}_{3}(\xi,\eta,\sigma) & :=\textbf{m}_{3}(\xi,\eta,\sigma).
\end{align*}
Here ${\bf m}_{1},{\bf m}_{2}$, and ${\bf m}_{3}$ are defined in \eqref{eq:multi-m1} and \eqref{eq:multi-m21}, respectively. 
Since $\theta_1=\theta_3$, by the mean value theorem we have 
\begin{equation*}
|{ \nabla_{\eta}p}_{{\mathbf{\Theta}}}(\xi,\eta,\sigma)|= \left|\frac{\xi+\eta}{\bra{\xi+\eta}} - \frac{\xi+\eta+\sigma}{\bra{\xi+\eta+\sigma}} \right|\gtrsim\frac{L'}{\bra{N_{1}}^{3}},
\end{equation*}
which leads us to the differential inequalities
\begin{align*}
\begin{aligned}
\left|\nabla_{\eta}^{m}\nabla_{\sigma}^{\ell}\textbf{k}_{1}\freq\right| & \les L^{-2}(L')^{-2}\bra{N_{1}}^{6} \bra{N_1}^{2(m+\ell)} L^{-m}(L')^{-\ell} \\
&\les L^{-2}(L')^{-2}\bra{N_{1}}^{6} M^{\frac{48}{k}} L^{-m}(L')^{-\ell},\\
\left|\nabla_{\eta}^{m}\nabla_{\sigma}^{\ell}\textbf{k}_{2}\freq\right| & \les L^{-3}(L')^{-2}\bra{N_{1}}^{6}\bra{N_1}^{2(m+\ell)}L^{-m}(L')^{-\ell}\\
&\les L^{-3}(L')^{-2}\bra{N_{1}}^{6}M^{\frac{48}k}L^{-m}(L')^{-\ell},
\end{aligned}
\end{align*}
for $0\le m,\ell \le 4$, and the pointwise bound
\begin{align}
\left|\textbf{k}_{3}\freq\right| & \les L^{-4}(L')^{-2}\bra{N_{1}}^{6}.\label{eq-muliple-k}
\end{align}
Applying Lemma~\ref{kernel} with $\| \mathbf{k_1} \|_{\rm{CM}}=L^{-2}(L')^{-2}\bra{N_{1}}^{6} M^{\frac{48}{k}}$, we estimate 
\begin{align*}
 & \sum_{(\nl)\in \mathcal{A}_4 }\left|\mathcal K_{1}(s,\xi)\right|\\
 & \les M^{-2+\frac{48}k + \frac{30}k}  \bra{N}^{-10}\sum_{(\nl)\in \mathcal{A}_4 }L^{-2}(L')^{-2}\bra{N_{1}}^{6}\normo{\psi_{\thet,N_2}}_{L^{\infty}}\\
 &\hspace{1cm}\times \left(N_1^{-2}\normo{x^{2}f_{\theo}}_{L^{2}}\normo{f_{\theth,N_{3}}}_{L^{2}} + N_3^{-2} \normo{f_{\theo,N_1}}_{L^{2}}\normo{x^2f_{\theth}}_{L^{2}}\right)\\
 & \les 
 \ve_1^3 M^{-2 +\frac{78}k} \bra{N}^{-10} M^{-\frac32 +2\de_0}\sum_{(\nl)\in \mathcal{A}_4 }L^{-2}(L')^{-2}\bra{N_2}^{-2}N_1^{-\frac12}\bra{N_{1}}^{-4}\\
 & \les\ve_{1}^{3}M^{-\frac72+\frac{78}k+2\de_0}\bra{N}^{-10}M^{1-\frac{2}{45}}M^{\frac23 +\frac2k} M^{\frac38 -\frac1{60}}    
 \les\ve_{1}^{3}M^{-(1+\de_{0})}\bra{N}^{-10},
\end{align*}
where we used that 
$$
L^{-\frac{2}{3}}(L')^{-2}\le M^{\frac23 + \frac2k} \;\mbox{ and }\; M^{-\frac34 + \frac1{30}} \les L \ll N_1.
$$
Similarly, we estimate by using $\| \mathbf{k_2} \|_{\rm{CM}}=L^{-3}(L')^{-2}\bra{N_{1}}^{6} M^{\frac{48}{k}}$
\begin{align*}
 & \sum_{(\nl)\in\mathcal{A}_4}\left|\mathcal K_{2}(s,\xi)\right|\\
 & \les M^{-2+\frac{48}k +\frac{30}k}\bra{N}^{-10} \sum_{(\nl)\in\mathcal{A}_4}L^{-3}(L')^{-2}\bra{N_{1}}^{6}\normo{\psi_{\thet,N_2}}_{L^{\infty}}\\
 &\hspace{1cm}\times \left(N_1^{-1}\normo{xf_{\theo}}_{L^{2}}\normo{f_{\theth,N_{3}}}_{L^{2}} + N_3^{-1} \normo{f_{\theo,N_1}}_{L^{2}}\normo{xf_{\theth}}_{L^{2}}\right)\\
 & \les\ve_{1}^{3}M^{-2+\frac{78}k}\bra{N}^{-10}M^{\frac{7}{4}-\frac{7}{90}}M^{\frac23 + \frac2k}M^{-\frac{3}{2}}M^{\de_{0}}\les\ve_{1}^{3}M^{-(1+\de_{0})}\bra{N}^{-10},
\end{align*}
where we used that 
\[
L^{-\frac{7}{3}}\le M^{\frac{7}{4}-\frac{7}{90}}\;\mbox{ and }\;L^{-\frac{2}{3}}(L')^{-2}\le M^{\frac23 + \frac2k}.
\]
Using H\"older inequality and \eqref{eq-muliple-k}, we see that
\begin{align*}
 & \sum_{(\nl)\in\mathcal{A}_4}\left|\mathcal K_{3}(s,\xi)\right|\\
 & \les M^{-2} \sum_{(\nl)\in\mathcal{A}_4}L^{-1}L' \bra{N_{1}}^{6}\normo{\wh{f_{\theo,N_{1}}}}_{L^{\infty}}\normo{\wh{f_{\thet,N_{2}}}}_{L^{\infty}}\normo{\wh{f_{\theth,N_{3}}}}_{L^{\infty}}\\
 & \les\ve_{1}^{3}M^{-2}M^{\frac{3}{4}-\frac{1}{30}} \bra{N}^{-10}\les\ve_{1}^{3}M^{-(1+\de_{0})}\bra{N}^{-10}.
\end{align*}

\noindent \textit{(2) Estimates when $\theta_1\neq\theta_3$.}
The proof proceeds similarly to the previous case. We integrate by parts twice in $\sigma$ variable
\begin{align*}
	\mathcal I_{{\mathbf{\Theta}},\textbf{L}}^{\textbf{N}}(s,\xi) & =\wt{\mathcal K_{1}}(s,\xi)+ \wt{\mathcal  K_{2}}(s,\xi)+ \wt{\mathcal K_{3}}(s,\xi),\\
	\wt{\mathcal K_{1}}(s,\xi) & :=-\frac{1}{s^{2}}\iint_{\R^{3+3}}e^{is\phasep\freq}\wt{\textbf{k}_{1}}(\xi,\eta,\sigma)\nabla_{\sigma}^{2}\left[\wh{f_{\theo,N_{1}}}(s,\xi+\eta)\right.\\
	& \qquad\qquad\qquad\qquad\qquad\left.\times\bigbra{\wh{f_{\theth,N_{3}}}(s,\xi+\eta+\sigma),\wh{f_{\thet,N_{2}}}(s,\xi+\sigma)}\right]d\eta d\sigma,\\
	\wt{\mathcal K_{2}}(s,\xi) & :=-\frac{2}{s^{2}}\iint_{\R^{3+3}}e^{is\phasep\freq}\wt{\textbf{k}_{2}}(\xi,\eta,\sigma)\nabla_{\sigma}\left[\wh{f_{\theo,N_{1}}}(s,\xi+\eta)\right.\\
	& \qquad\qquad\qquad\qquad\qquad\left.\times\bigbra{\wh{f_{\theth,N_{3}}}(s,\xi+\eta+\sigma),\wh{f_{\thet,N_{2}}}(s,\xi+\sigma)}\right]d\eta d\sigma,\\
	\wt{\mathcal K_{3}}(s,\xi) & :=-\frac{1}{s^{2}}\iint_{\R^{3+3}}e^{is\phasep\freq}\wt{\textbf{k}_{3}}(\xi,\eta,\sigma)\wh{f_{\theo,N_{1}}}(s,\xi+\eta)\\
	& \qquad\qquad\qquad\qquad\qquad\times\bigbra{\wh{f_{\theth,N_{3}}}(s,\xi+\eta+\sigma),\wh{f_{\thet,N_{2}}}(s,\xi+\sigma)}d\eta d\sigma,
\end{align*}
where
\begin{align*}
	&\wt{\textbf{k}_{1}}(\xi,\eta,\sigma)  :=\left[\frac{\nabla_{\sigma}\phase\freq}{|\nabla_{\sigma}\phase\freq|^{2}}\right]^2|\eta|^{-2}\rho_{L}(\eta)\rho_{L'}(\sigma)\rho_{N_{2}}(\xi+\sigma)\rho_{N_{3}}(\xi+\eta+\sigma),\\
	&\wt{\textbf{k}_{2}}(\xi,\eta,\sigma):=\frac{\nabla_{\sigma}\phase\freq}{|\nabla_{\sigma}\phase\freq|^{2}}\nabla_{\sigma}\left(\frac{\nabla_{\sigma}\phase\freq}{|\nabla_{\sigma}\phase\freq|^{2}}\rho_{L'}(\sigma)\right)|\eta|^{-2}\\
	&\hspace{6cm}\times\rho_{L}(\eta)\rho_{N_{2}}(\xi+\sigma)\rho_{N_{3}}(\xi+\eta+\sigma),\\
	&\wt{\textbf{k}_{3}}(\xi,\eta,\sigma)  :=  \nabla_\sigma	\wt{\textbf{k}_{2}}(\xi,\eta,\sigma).
\end{align*}
As observed in \eqref{psigma}, $|\nabla_{\sigma}\phase|$ has different lower bounds depending on the sign of $\theta_2$ and $\theta_3$. 
One can verify that, however, if $(\mathbf{N},\mathbf{L})\in \mathcal{A}_4$, the lower bounds of $|\nabla_{\sigma}\phase|$ when $\theta_2\neq\theta_3$  are always greater than those when $\theta_2=\theta_3$
\begin{align*}
	N_2\langle N_2\rangle^{-1}\gtrsim L\langle N_2\rangle^{-3}.
\end{align*}
Hence, we suffice to treat the latter case, when $\theta_2=\theta_3$. 
Using \eqref{psigma}, we can show the pointwise bounds of the multipliers 
\begin{align*}
	\left|  \wt{\textbf{k}_1}\freq \right| &\les L^{-4} \bra{N_2}^{6},\\
	\left|  \wt{\textbf{k}_2}\freq \right| &\les L^{-4}(L')^{-1} M^{\frac6k}\bra{N_2}^{6},\\
	\left|  \wt{\textbf{k}_3}\freq \right| &\les L^{-4}(L')^{-2} M^{\frac6k}\bra{N_2}^{6}.
\end{align*}
Then, we estimate by the H\"older inequality
\begin{align*}
	& \sum_{(\nl)\in\mathcal{A}_4}\left|\wt{\mathcal K_{1}}(s,\xi)\right|\\
	& \les M^{-2} \sum_{(\nl)\in\mathcal{A}_4}L^{-4}\bra{N_{2}}^{6}L^3\normo{\wh{f_{\theo,N_{1}}}}_{L^{\infty}}\\
	&\hspace{3cm}\times \left(N_2^{-2}\normo{x^{2}f_{\thet}}_{L^{2}}\normo{f_{\theth,N_{3}}}_{L^{2}} + N_3^{-2} \normo{f_{\theo,N_2}}_{L^{2}}\normo{x^2f_{\theth}}_{L^{2}}\right)\\
	& \les \ve_1^3 M^{-2+2\de_0} \sum_{(\nl)\in\mathcal{A}_4}L^{-1}  (L')^{-\frac12}  \bra{N_3}^{-14}\\
	& \les\ve_{1}^{3}M^{-2+\de_0}M^{\frac{15}{24}-\frac{1}{36}}M^{\frac16 + \frac1{2k}}\bra{N}^{-10}\les\ve_{1}^{3}M^{-(1+\de_{0})}\bra{N}^{-10},
\end{align*}
and
\begin{align*}
	& \sum_{(\nl)\in\mathcal{A}_4}\left| \wt{\mathcal K_{2}}(s,\xi)\right|\\
	& \les M^{-2+\frac6k} \sum_{(\nl)\in\mathcal{A}_4}L^{-4}(L')^{-1}\bra{N_2}^{6}L^3\normo{\wh{f_{\theo,N_{1}}}}_{L^{\infty}}\\
	&\hspace{3cm}\times \left(N_2^{-1}\normo{xf_{\thet}}_{L^{2}}\normo{f_{\theth,N_{3}}}_{L^{2}} + N_3^{-1} \normo{f_{\theo,N_2}}_{L^{2}}\normo{xf_{\theth}}_{L^{2}}\right)\\
	& \les\ve_{1}^{3}M^{-2+\frac6k}M^{\frac{1}{2}-\frac{2}{90}}M^{\frac13 + \frac1k}M^{\de_{0}} \bra{N}^{-10}\les\ve_{1}^{3}M^{-(1+\de_{0})}\bra{N}^{-10}.
\end{align*}
where we used that
\[
L' \ll N_2 \sim N_3, \; L^{-1}\le M^{\frac34 - \frac1{30}},\mbox{ and }\;L^{-\frac{1}{3}}(L')^{-1}\le M^{\frac13 + \frac1k}.
\]
In a similar way, we get
\begin{align*}
	& \sum_{(\nl)\in\mathcal{A}_4}\left| \wt{\mathcal K_{3}}(s,\xi)\right|\\
	& \les M^{-2+\frac6k} \sum_{(\nl)\in\mathcal{A}_4}L^{-4}(L')^{-2}\bra{N_2}^{6}L^3(L')^3\normo{\wh{f_{\theo,N_{1}}}}_{L^{\infty}}\normo{\wh{f_{\thet,N_{2}}}}_{L^{\infty}}\normo{\wh{f_{\theth,N_{3}}}}_{L^{\infty}}\\
	& \les\ve_{1}^{3}M^{-2+\frac6k}M^{\frac{3}{4}-\frac{1}{30}} \bra{N}^{-10}\les\ve_{1}^{3}M^{-(1+\de_{0})}\bra{N}^{-10}.
\end{align*}
This concludes the proof of \eqref{eq-modi-part3}, and thus of Proposition \ref{prop-scatt}.

%%%%%%%%%%%%%%%%%%%%%%%%%%%%%%%%%%%%%%%%%%%%%%%%%%%%%%%%%%%%%%%%%%%%%%%%%%%%%%%%%%%%%%%%%%%%%%%%%%%%%%%%%%%%%%%%%%%%%%%%%%%%%%%%%%%%%%%%%%%%%%%%%%%%%%%%%%%%%%%%%%%%%%%%%%%%%%%%%%%%%%%%%%%%%%%%%%%%%%%%%%%%%%%%%

\appendix

\bibliographystyle{plain}
\bibliography{ReferencesCKLY}

\begin{thebibliography}{10}

\bibitem{Bejenaru2015}
Ioan Bejenaru and Sebastian Herr.
\newblock The cubic {D}irac equation: small initial data in {$H^1(\Bbb{R}^3)$}.
\newblock {\em Comm. Math. Phys.}, 335(1):43--82, 2015.

\bibitem{Bejenaru2017}
Ioan Bejenaru and Sebastian Herr.
\newblock On global well-posedness and scattering for the massive
  {D}irac-{K}lein-{G}ordon system.
\newblock {\em J. Eur. Math. Soc. (JEMS)}, 19(8):2445--2467, 2017.

\bibitem{bour}
Nikolaos Bournaveas.
\newblock Local existence for the {M}axwell-{D}irac equations in three space
  dimensions.
\newblock {\em Communications in Partial Differential Equations},
  21(5-6):693--720, 1996.

\bibitem{chagla}
J.~M. Chadam and R.~T. Glassey.
\newblock On the {M}axwell-{D}irac equations with zero magnetic field and their
  solution in two space dimensions.
\newblock {\em J. Math. Anal. Appl.}, 53(3):495--507, 1976.

\bibitem{chhole}
Yonggeun Cho, Seokchang Hong, and Kiyeon Lee.
\newblock Scattering and non-scattering of the {H}artree-type nonlinear {D}irac
  system at critical regularity.
\newblock {\em preprint}, 2021.

\bibitem{chhwya}
Yonggeun Cho, Gyeongha Hwang, and Changhun Yang.
\newblock On the modified scattering of {$3$}-d {H}artree type fractional
  {S}chr\"odinger equations with {C}oulomb potential.
\newblock {\em Adv. Differential Equations}, 23(9-10):649--692, 2018.

\bibitem{chleoz}
Yonggeun Cho, Kiyeon Lee, and Tohru Ozawa.
\newblock Small data scattering of 2d {H}artree type {D}irac equations.
\newblock {\em Journal of Mathematical Analysis and Applications},
  506(1):125549, 2022.

\bibitem{choz2006-siam}
Yonggeun Cho and Tohru Ozawa.
\newblock On the semirelativistic {H}artree--type equation.
\newblock {\em SIAM Journal on Mathematical Analysis}, 38(4):1060--1074, 2006.

\bibitem{anfosel}
Piero D'Ancona, Damiano Foschi, and Sigmund Selberg.
\newblock Null structure and almost optimal local regularity for the
  {D}irac-{K}lein-{G}ordon system.
\newblock {\em J. Eur. Math. Soc. (JEMS)}, 9(4):877--899, 2007.

\bibitem{anfosel2}
Piero D'Ancona, Damiano Foschi, and Sigmund Selberg.
\newblock Null structure and almost optimal local well-posedness of the
  {M}axwell-{D}irac system.
\newblock {\em American Journal of Mathematics}, 132(3):771--839, 2010.

\bibitem{ansel}
Piero D'Ancona and Sigmund Selberg.
\newblock Global well-posedness of the {M}axwell-{D}irac system in two space
  dimensions.
\newblock {\em Journal of Functional Analysis}, 260(8):2300--2365, 2011.

\bibitem{gaoh}
Cristian Gavrus and Sung jin Oh.
\newblock Global well-posedness of high dimensional {M}axwell-{D}irac for small
  critical data.
\newblock {\em Memoirs of the American Mathematical Society}, 264(1279), 2020.

\bibitem{gemasha2008}
Pierre Germain, Nader Masmoudi, and Jalal Shatah.
\newblock {Global Solutions for 3D Quadratic {S}chr\"odinger equations}.
\newblock {\em International Mathematics Research Notices}, 2009(3):414--432,
  12 2008.

\bibitem{gemasha2012-jmpa}
Pierre Germain, Nader Masmoudi, and Jalal Shatah.
\newblock {Global solutions for 2D quadratic {S}chr\"odinger equations}.
\newblock {\em Journal de Mathematiques Pures et Appliquees}, 97(5):505--543,
  2012.

\bibitem{gemasha2012-annals}
Pierre Germain, Nader Masmoudi, and Jalal Shatah.
\newblock Global solutions for the gravity water waves equation in dimension 3.
\newblock {\em Annals of Mathematics}, 175:691--754, 2012.

\bibitem{gepuro}
Pierre Germain, Fabio Pusateri, and Fr\'{e}d\'{e}ric Rousset.
\newblock Asymptotic stability of solitons for m{K}d{V}.
\newblock {\em Adv. Math.}, 299:272--330, 2016.

\bibitem{gro}
Leonard Gross.
\newblock The {C}auchy problem for the coupled {M}axwell and {D}irac equations.
\newblock {\em Communications on Pure and Applied Mathematics}, 19:1--15, 1966.

\bibitem{Hayashi-Naumkin}
Nakao Hayashi and Pavel~I. Naumkin.
\newblock Asymptotics for large time of solutions to the nonlinear
  {S}chr\"{o}dinger and {H}artree equations.
\newblock {\em Amer. J. Math.}, 120(2):369--389, 1998.

\bibitem{hele2014}
Sebastian Herr and Enno Lenzmann.
\newblock The {B}oson star equation with initial data of low regularity.
\newblock {\em Nonlinear Analysis: Theory, Methods \& Applications},
  97:125--137, 2014.

\bibitem{Herr2015}
Sebastian Herr and Achenef Tesfahun.
\newblock Small data scattering for semi-relativistic equations with {H}artree
  type nonlinearity.
\newblock {\em J. Differential Equations}, 259(10):5510--5532, 2015.

\bibitem{huh}
Hyungjin Huh.
\newblock Global charge solutions of {M}axwell-{D}irac equations in {$\Bbb
  R^{1+1}$}.
\newblock {\em Journal of Physics A: Mathematical and Theoretical},
  43(44):445206, 2010.

\bibitem{iopu}
Alexandru~D. Ionescu and Fabio Pusateri.
\newblock Nonlinear fractional {S}chr\"odinger equations in one dimension.
\newblock {\em Journal of Functional Analysis}, 266(1):139--176, 2014.

\bibitem{kapu}
Jun Kato and {Fabio} Pusateri.
\newblock A new proof of long-range scattering for critical nonlinear
  {S}chr{\"o}dinger equations.
\newblock {\em Differential and Integral Equations}, 24(9-10):923--940, 2011.

\bibitem{Ozawa91}
Tohru Ozawa.
\newblock Long range scattering for nonlinear {S}chr\"{o}dinger equations in
  one space dimension.
\newblock {\em Comm. Math. Phys.}, 139(3):479--493, 1991.

\bibitem{pusa}
Fabio Pusateri.
\newblock Modified scattering for the {B}oson star equation.
\newblock {\em Comm. Math. Phys.}, 332(3):1203--1234, 2014.

\end{thebibliography}

\end{document}